\def\BibTeX{{\rm B\kern-.05em{\sc i\kern-.025em b}\kern-.08em
    T\kern-.1667em\lower.7ex\hbox{E}\kern-.125emX}}
\newtheorem{theorem}{Theorem}
\newtheorem{definition}{Definition}
\newtheorem{example}{Example}
\newtheorem{lemma}{Lemma}
\newtheorem{assumption}{Assumption}
\newtheorem{corollary}{Corollary}
\definecolor{cblue}{RGB}{16,78,139}
\definecolor{cred}{RGB}{139,37,0}
\definecolor{cgreen}{RGB}{0,139,0} 
\definecolor{babyblueeyes}{rgb}{0.63, 0.79, 0.95}
\definecolor{amethyst}{rgb}{0.6, 0.4, 0.8}
\newcommand{\sqbox}[1]{\textcolor{#1}{\ding{110}}}%
\newcommand{\thickhline}{%
    \noalign {\ifnum 0=`}\fi \hrule height 1pt
    \futurelet \reserved@a \@xhline
}
\newcolumntype{"}{@{\hskip\tabcolsep\vrule width 1pt\hskip\tabcolsep}}
\DeclareMathOperator{\Ima}{Im}
\newcommand{\eqdef}{\overset{def}{=}}
\DeclareMathOperator{\rank}{rank}
\DeclareMathOperator{\Tr}{Tr}
\newif\ifTR
\begin{document}

\title{Reducing the dimensionality of data using tempered distributions}

\author{
Rustem Takhanov \\
 School of Sciences and Humanities\\
  Nazarbayev University\\
  53 Kabanbay Batyr Ave, Astana city \\
  \texttt{rustem.takhanov@nu.edu.kz}
}
\maketitle

\begin{abstract}
We reformulate unsupervised dimension reduction problem (UDR) in the language of tempered distributions, i.e. as a problem of approximating an empirical probability density function by another tempered distribution, supported in a $k$-dimensional subspace. 
We show that this task is connected with another classical problem of data science --- the sufficient dimension reduction problem (SDR). In fact, an algorithm for the first problem induces an algorithm for the second and vice versa. 

In order to reduce an optimization problem over distributions to an optimization problem over ordinary functions we introduce a nonnegative penalty function  that ``forces'' the support of the model distribution to be $k$-dimensional.
Then we present an algorithm for the minimization of the penalized objective, based on the infinite-dimensional low-rank optimization, which we call the alternating scheme.
Also, we design an efficient approximate algorithm for a special case of the problem, where the distance between the empirical distribution and the model distribution is measured by Maximum Mean Discrepancy defined by a  Mercer kernel of a certain type.
We test our methods on four examples (three UDR and one SDR) using synthetic data and standard datasets. 
\end{abstract}

{\bf Keywords}: linear dimensionality reduction,  sufficient dimension reduction,  alternating scheme, tempered distribution.

\section{Introduction}
{\em Linear dimension reduction} (LDR) is a family of problems in data science that includes principal component analysis, factor analysis, linear multidimensional scaling, Fisher’s linear discriminant analysis, canonical
correlations analysis, sufficient dimension reduction (SDR), maximum autocorrelation factors, slow feature analysis and more. In unsupervised dimension reduction (UDR) we are given a finite number of points in ${\mathbb R}^n$ (sampled according to some unknown distribution) and the goal is to find a ``low-dimensional'' manifold (e.g. an affine or a linear subspace) that approximates ``the support'' of the distribution. UDR, historically, was approached by linear methods and, therefore, has developed into a set of standard tools in data science.
Though non-linear dimensionality reduction (a.k.a. the manifold learning) techniques gained a wide popularity in modern research, the potential of linear methods is far from being exhausted. For high-dimensional datasets, due to the phenomenon of concentration of measure~\cite{Johnson1986}, LDR often can give us an interpretable and low-dimensional representation of data. The linearity of a projection operator is a restrictive property that allows avoiding the overfitting in the dimension reduction (which is a key problem for the manifold learning techniques). 

The LDR study field currently achieved a saturation level at which unifying frameworks for the problem become of special interest. One of such frameworks, that covers many cases of LDR, frames LDR as the optimization task over matrix manifolds such as the Stiefel manifold~\cite{cunningham}. 
Elements of the Stiefel manifold $V_k({\mathbb R}^n)$ are orthogonal $k$-frames $O\in {\mathbb R}^{n\times k}$ whose column space   is the $k$-dimensional space onto which a dataset is projected. Different loss functions on $V_k({\mathbb R}^n)$ define different versions of LDR. Table 1 of~\cite{cunningham} lists fourteen common LDR techniques (such as principal component analysis, multi-dimensional scaling,  linear discriminant analysis etc), nine of which are formulated over Stiefel manifolds. Such a general treatment allows to approach all LDR problems by a single algorithm, i.e. by an adaptation of the gradient descent method to Stiefel manifolds~\cite{AbsilMahonySepulchre}. This adaptation consists of a series of  projected gradient steps where a common gradient descent is followed by a projection onto a Stiefel manifold, which is equivalent to the computation of a singular value decomposition of a current point.  
Note that the Stiefel manifold is a non-convex set and even minimizing a convex function on such a manifold is an NP-hard task, in general. Although, in applications, the projected gradient descent method demonstrated a relatively fast convergence to good quality solutions.


The paper's main contribution is a development of a novel view of LDR.
First, an argument over which we search in an optimization task is not a $k$-frame, but a tempered distribution (which is a generalization of a probabilistic distribution) that is concentrated on a $k$-dimensional linear subspace of ${\mathbb R}^n$. Thus, an argument has a more complex structure, it includes not just a $k$-dimensional subspace, but also a distribution on that subspace. 
The justification of our optimization framework uses the theory of generalized functions, or tempered distributions~\cite{Schwartz,Soboleff}. 
An important generalized function that cannot be represented as an ordinary function is the Dirac delta function, denoted $\delta$, and $\delta^n$ denotes its $n$-dimensional version.

This more general formulation allows us to analyze new types of objectives for LDR. In Section~\ref{examples} we list four examples of such objectives that, to our knowledge, have not been considered in the LDR field so far. A notable specifics of such objectives is that, even for a fixed $k$-dimensional subspace $\mathcal{L}$, finding  an optimal distribution supported in $\mathcal{L}$ is a non-trivial optimization task. In other words, our problems can not be simply reduced to the previous formalisms based on the Stiefel manifold, or the Grassmannian~\cite{WangQiong,Grassmannian}.

Let us briefly describe an optimization problem that motivates the new formalism.
Any dataset $\{{\mathbf x}_i\}_{i=1}^N \subseteq {\mathbb R}^n$ naturally corresponds to the distribution 
\begin{equation}
p_{{\rm emp}}({\mathbf x}) = \frac{1}{N} \sum_{i=1}^N \delta^n (\bold{x} - \bold{x}_i)
\end{equation}
which, with some abuse of terminology, can be called the empirical probability density function.
Based on that, UDR can be understood as a task whose goal is to approximate
$
p_{{\rm emp}}({\mathbf x})
$ by $q({\mathbf x})$, where $q({\mathbf x})$ is a distribution whose density is supported in a $k$-dimensional linear subspace $\mathcal{L}\subseteq {\mathbb R}^n$. Note that a function whose density is supported in some low-dimensional subset of ${\mathbb R}^n$ is not an ordinary function. An exact definition of a set of such distributions, denoted by $\mathcal{G}_k$, is given in Section~\ref{classes}.
To formulate an optimization task we additionally need a loss $D(p_{{\rm emp}},q)$ that measures the distance between the ground truth $p_{{\rm emp}}$ and a distribution $q$, that we search for. Thus, in our approach, the UDR problem is defined as
\begin{equation}\label{DRP}
I\left(q\right) = D\left(p_{{\rm emp}},q\right) \rightarrow\min_{q\in \mathcal{G}_k}
\end{equation}
under the condition that $q({\mathbf x})$ has a $k$-dimensional support. In most of our statements we do not consider any specific loss functions, though in our basic examples we deal with the Maximum Mean Discrepancy distance or the Wasserstein distance.

{\bf The UDR and SDR.} Within our formalism the sufficient dimension reduction problem is tightly connected with the UDR problem. In the SDR, given supervised data, the goal is to find the so called effective subspace, defined by its orthogonal basis (or, a $k$-frame) $\{{\mathbf w}_1, \cdots, {\mathbf w}_k\}\subseteq {\mathbb R}^n$ , such that the regression function can be searched in the form $g({\mathbf w}^T_1{\mathbf x}, \cdots, {\mathbf w}^T_k{\mathbf x})$. In literature, these functions are known under different
names, e.g. functions with low effective dimensionality~\cite{WangZiyu}, functions with active subspaces~\cite{ConstantinePaul}
and multi-ridge functions~\cite{Fornasier,TYAGI2014389}.
In~\cite{Meihong} it was shown that a method originally developed for the SDR can be turned into a UDR method, i.e. applied to unsupervised data, by simply setting an output to be equal to an input. 
In such methods for the SDR problem as the Sliced Inverse Regression~\cite{Li91}, the Principal Hessian Direction~\cite{Li2}, the Sliced Average Variance Estimation~\cite{SaveCook}, an effective subspace is recovered from the Singular Value Decomposition applied to a certain matrix that is constructed from a training set in a straightforward way. Other methods, such as the Principal Fitted Components~\cite{Forzani}, the Likelihood Acquired
Direction~\cite{CookLiliana}, the Kernel Dimensionality Reduction~\cite{Fukumizu}, are based on analytic expressions measuring the affinity of a $k$-dimensional subspace to the effective subspace. The second type of methods reduce the SDR problem to an optimization problem over the Stiefel manifold, or the Grassmanian. For other methods we refer to a tutorial on SDR methods~\cite{Ghojogh}. Again, an important aspect of all these methods is that, given a fixed effective subspace, the regression function that predicts an output variable has a relatively straightforward structure and is not optimized by any additional supervised learning procedure. The key novelty that our framework brings to the SDR is that we suggest to search for an effective subspace and a regression function in a joint manner.

The key observation of our analysis, stated in Theorem~\ref{g-dual-f}, is that a class of functions of the form $g({\mathbf w}^T_1{\mathbf x}, \cdots, {\mathbf w}^T_k{\mathbf x})$ can be characterized as functions whose Fourier transform is supported in the corresponding effective subspace. In other words, functions with an effective dimensionality $k$ are dual to $\mathcal{G}_k$ under the Fourier transform. Three examples of UDR problems that  we give in Section~\ref{examples} are cast as~\eqref{DRP}, whereas in the fourth example we formulate SDR as an optimization task with the search space dual to that of UDR (to distinguish our formulation from a general SDR problem we call it an SDR with optimized regression function). Thus, all four examples can be studied within our optimization framework.

Besides the problem setup we also suggest a general algorithm that tackles it. 
The basic idea of that algorithm, which we call the alternating scheme, instead of optimizing over $\mathcal{G}_k$, to optimize over ordinary functions with a penalty added to an objective that forces the ordinary function's support to be low-dimensional.

{\bf The penalty based reformulation.}  
The starting point of our approach is to reduce the task~\eqref{DRP} to the minimization of $I\left(q\right)+\lambda R(q)$ over ordinary functions $q$. We define the penalty function $R(q)$ in such a way that forcing $R(q)$ to be small is equivalent to forcing ``the support'' of $q$ to be $k$-dimensional. 
Our definition of $R$ is based on using a positive definite kernel $M: {\mathbb R}^n\times {\mathbb R}^n\to {\mathbb C}$. 

First we note that $M$ defines a billinear form on pairs of (possibly, generalized) functions by $\langle f|M|g\rangle = \int_{{\mathbb R}^n\times {\mathbb R}^n} f({\mathbf x})^\ast M({\mathbf x}, {\mathbf y}) g({\mathbf y})d{\mathbf x} d{\mathbf y}$. On a properly defined space of (generalized)  functions, the billinear form $\langle \cdot| M |\cdot\rangle$ is the hermitian inner product, using which one can define distances and other geometrical notions on that space. Note that if $f$ and $g$ are probability density functions and $M$ is real-valued, the corresponding distance function, i.e. ${\rm dist}_M(f,g)=(\langle f|M|f\rangle-\langle g|M|g\rangle-2\langle f|M|g\rangle)^{1/2}$ coincides with the maximum mean discrepancy metric~\cite{MMD}.
We define $R(q)$ as 
\begin{equation}
R(q) = \sum_{i=k+1}^n \lambda_i(M_q)
\end{equation}
where 
$M_{q} = {\rm Re}\begin{bmatrix}
\langle x_i q({\mathbf x})| M | x_j q({\mathbf x})\rangle
\end{bmatrix}_{i,j=\overline{1,n}}$
and $\lambda_1(M_q)\geq \lambda_2(M_q)\geq \cdots $ are ordered eigenvalues of the matrix $M_q$. The sum of all but first $k$ eigenvalues of a positive semidefinite matrix $A$ is a well-known penalty function, denoted by $\|A\|_{n-k}$ and called a Ky Fan $n-k$-antinorm. Applications of the Ky Fan $n-k$-antinorm to low-rank optimization problems can be found in~\cite{TruncatedNN,OhKweon,LiuJin,HONG2016216} and its properties are studied in~\cite{HIAI20131568}.

Thus, we reduce the task~\eqref{DRP} to
\begin{equation}\label{DRP2}
I\left(q\right) +\lambda \|M_q\|_{n-k} \rightarrow\min_{q}
\end{equation}
over ordinary functions. An analysis that we make in Subsection~\ref{generaltheory} of Section~\ref{HowOpt} (based on theory of tempered distributions) shows that if the kernel $M$ is chosen from a class of so called proper kernels and the solution of~\eqref{DRP} satisfies certain regularity conditions, the solution of the task~\eqref{DRP2} for $\lambda\to +\infty$ will approach the solution of~\eqref{DRP}.

{\bf The alternating scheme.} The task~\eqref{DRP2} can be understood as an infinite dimensional low-rank optimization task in which the penalty term forces the matrix $M_q$ to be of rank $k$. In Section~\ref{mod-alt-right} we prove that $M_q = S_qS_q^\dag$ where $S_q$ is a linear operator between a suitable space $\mathcal{H}$ and ${\mathbb R}^n$ that itself depends on $q$ linearly, and this automatically gives us that $R(q) = \min_S\|S_q-S\|^2_\ast$ where the minimum is taken over all operators between $\mathcal{H}$ and ${\mathbb R}^n$ of rank $k$ and $\|\cdot\|_\ast$ is a suitable norm on the space of bounded linear operators from $\mathcal{H}$ to ${\mathbb R}^n$.

Then, a natural idea to solve the task~\eqref{DRP2} is to present it as the joint minimum $\min\limits_{q}\min\limits_{S: {\rm rank\,}S=k} I\left(q\right) +\lambda \|S_q-S\|^2_\ast$ and to minimize the objective over $q$ and over $S$ of rank $k$ in an alternating fashion, i.e.
\begin{equation}\label{DRP3}
\begin{split}
q_{l+1} = \arg\min_{q} I\left(q\right) +\lambda \|S_q-S_{l}\|^2_\ast, \\
S_{l+1} = \arg\min_{S: {\rm rank\,}S=k} \|S_{q_{l+1}}-S\|^2_\ast.
\end{split}
\end{equation}
This algorithm, called the alternating scheme, is suitable for a practical implementation due to the fact that the second step of it, i.e. the optimization over $S$ of rank $k$, is solvable analytically. In fact, $S_{l+1}$ is the Singular Value Decomposition of $S_{q_{l+1}}$ truncated at $k$-th term.
In~\ref{mod-alt-right2} we give an algorithm whose every step is equivalent to a corresponding step of the alternating algorithm, but it operates on Fourier transforms of functions rather than on functions of initial coordinates. Numerical specifications of the alternating scheme for different special cases of UDR/SDR problems are given in~\ref{numerical-mmd}, ~\ref{numerical-hm}, ~\ref{numerical-wd} and~\ref{numerical-sdr}. In Section~\ref{SDR-exp} we describe results of our experiments with the alternating scheme that we conducted for various synthetic and practical datasets. As a result we conclude that the alternating scheme is a practical algorithm that can be applied to datasets of moderate size. For the SDR tasks its performance is comparable with classical algorithms.

{\bf An approximate algorithm for a special case.} For a special case of the task~\eqref{DRP}, where the distance function is the Maximum Mean Discrepancy with the kernel of the form $K({\mathbf x}, {\mathbf y}) = ({\mathbf x}\cdot {\mathbf y})H({\mathbf x}, {\mathbf y}) $ and $H$  is itself a Mercer kernel,
we develop an approximate algorithm that can be applied to large datasets. In Section~\ref{Approximate-MMD-PCA} we demonstrate that a solution with provable approximation ratios is given by the following simple procedure: given a dataset $\{{\mathbf x}_i\}_{i=1}^N$ we build a data matrix $X = [{\mathbf x}_1, \cdots, {\mathbf x}_N]$, a Gram matrix $G = [H({\mathbf x}_i, {\mathbf x}_j)]$, and output first $k$ principal components of the matrix $XGX^T$. This algorithm is tested on Yale B dataset for the shadow/black removal and SBMnet datasets for the background modeling. In both applications, our approximate algorithm showed a performance comparable to the performance of other low-rank approximation algorithms.

{\bf The structure of the paper} is as follows. In Section~\ref{prel} we give some notations and define standard notions from functional analysis that we use throughout the paper.
In Section~\ref{classes} we formally define the search space in Problem~\ref{DRP}, denoted $\mathcal{G}_k$, and an image of $\mathcal{G}_k$ under the Fourier transform, denoted $\mathcal{F}_k$.  In Section~\ref{examples} we formulate some UDR/SDR problems as optimization tasks over $\mathcal{G}_k$/$\mathcal{F}_k$.  
Instead of searching directly in a set of generalized functions, $\mathcal{G}_k$,  
in Section~\ref{HowOpt} we describe how we substitute an ordinary function for a distribution in the optimization task at the expence of adding a new penalty term to its objective, $\lambda R(f)$. Using a kernel $M({\mathbf x}, {\mathbf y})$, Theorem~\ref{rank-k} characterizes generalized $g\in\mathcal{G}_k$ as such $g$ for which the matrix of properly defined integrals $M_g = {\rm Re}\begin{bmatrix}
\iint_{{\mathbb R}^n\times {\mathbb R}^n} x_iy_j g({\mathbf x})^\ast M({\mathbf x}, {\mathbf y}) g({\mathbf y}) d {\mathbf x} d {\mathbf y}
\end{bmatrix}_{i,j=\overline{1,n}}$ is of rank $k$.
In Section~\ref{mod-alt-right} we suggest a method for solving $\min_{\phi} I(\phi)+\lambda R(\phi)$ which we call {\em the alternating scheme}. In Section~\ref{Approximate-MMD-PCA} we describe a simple approximate algorithm for the task~\eqref{DRP} in a special subcase of the Maximum Mean Discrepancy distance and prove some theoretical guarantees on the approximation ratio of this algorithm.
 Section~\ref{SDR-exp} is dedicated to experiments with the alternating scheme on synthetic and real world data and with the approximate algorithm on the shadow/black removal and the background modeling applications. Proofs of all theorems and lemmas are given after their formulations, or can be found in the appendix.  
\subsection{Related work}
As was already mentioned, another unifying framework for LDR tasks is suggested by~\cite{cunningham} in which the basic search space is the Stiefel manifold $V_k({\mathbb R}^n)$.
The main advantage of the Stiefel manifold over ${\mathcal G}_k$ is that its elements are finite-dimensional. Because a distribution from ${\mathcal G}_k$ is an infinite-dimensional object, an optimization over ${\mathcal G}_k$ requires additional constructions to turn it into a finite-dimensional task. Both an optimization over ${\mathcal G}_k$ and over $V_k({\mathbb R}^n)$ is typically hard: for a final point, at best one can guarantee that it is a local extremum. Promising aspects of ${\mathcal G}_k$ are: a) ${\mathcal G}_k$ allows to formulate a new class of objectives naturally on it, b) local extrema on ${\mathcal G}_k$ substantially differ from local extrema on $V_k({\mathbb R}^n)$, because a local search over ${\mathcal G}_k$ uses more degrees of freedom.

There is plenty of literature on the SDR problem some of which was already mentioned. In \cite{archaic} the Fourier transform was applied for estimating the effective subspace in SDR, implicitly using an analog of Theorem~\ref{g-dual-f}.
The closest to ours is a recent approach of~\cite{KAPLA2022107390}, where an effective subspace was computed in a two step process. First, given supervised data, a regression function was trained in the form of a neural network (with a general architecture), then the obtained regression function was approximated by another neural network with a bottleneck architecture (by which a low effective dimensionality is guaranteed by construction). Like in this approach, we train a regression function as a neural network, though we search for it and an effective subspace jointly. In our approach, it is a regularization term $R(f)$ that forces the neural network to have a low effective dimensionality.

Using Ky Fan $k$-antinorm as a regularizer for the matrix completion problem has been suggested by~\cite{TruncatedNN} and further developed in~\cite{OhKweon,LiuJin,HONG2016216}. Unlike this chain of works, we formulate an infinite-dimensional task and our regularizer $R(f)=\Vert M_f\Vert _{n-k}$ is a sum of smallest $n-k$ squared singular values of the infinite-dimensional operator $S_f$ where $S_f$ depends on $f$ linearly and $M_f = S_f S_f^\dag$. Thus, our algorithms are substantially different from algorithms designed within the latter approach. The idea of alternating two basic stages, convex optimization and SVD, is ubiquitous in low-rank optimization, see e.g.~\cite{JMLRmazumder10a,JMLRhastie15a}.

\section{Preliminaries and notations}\label{prel}
Throughout this paper we use standard terminology and notation from functional analysis. 
For details one can address the textbook on the theory of distributions~\cite{friedlander1998introduction}. The Schwartz space, denoted by $\mathcal{S}({\mathbb R}^n)$, is a space of infinitely differentiable functions $f: {\mathbb R}^n\rightarrow {\mathbb C}$ such that $\forall \alpha, \beta \in{\mathbb N}^n, \sup_{{\mathbf x}\in\mathbb{R}^n} $ $\vert {\mathbf x}^\alpha D^\beta f({\mathbf x}) \vert <\infty $, and equipped with standard topology.
Its dual space is denoted by $\mathcal{S'}({\mathbb R}^n)$ and is equipped with weak topology. 
For a tempered distribution $T\in \mathcal{S'}({\mathbb R}^n)$ and $\phi\in \mathcal{S}({\mathbb R}^n)$, $\langle T, \phi\rangle$ denotes $T(\phi)$. Thus, for a sequence $\{f_s\}\subseteq \mathcal{S'}({\mathbb R}^n)$ and $f\in \mathcal{S'}({\mathbb R}^n)$, $\lim_{s\rightarrow \infty}f_s = f$ (or, $f_s\rightarrow^\ast f$) means that $\lim_{s\rightarrow \infty}\langle f_s, \phi\rangle = \langle f, \phi \rangle$ for any $\phi\in \mathcal{S}({\mathbb R}^n)$.  For a sequence $\{f_s\}^\infty_{s=1}\subseteq {\mathcal S}' ({\mathbb R}^n)$, $\mathop{\rm Lim}\limits_{s\rightarrow \infty} f_s$ denotes a set of points $f\in {\mathcal S}' ({\mathbb R}^n)$, such that there exists a growing sequence $\{s_i\}\subseteq {\mathbb N}$ and $\lim_{i\rightarrow \infty} f_{s_i} = f$. 
The Fourier and inverse Fourier transforms are denoted by $\mathcal{F}, \mathcal{F}^{-1}: \mathcal{S'}({\mathbb R}^n)\rightarrow \mathcal{S'}({\mathbb R}^n)$. For brevity, we denote $\mathcal{F}[f]$ by $\hat f$.  
If all required conditions are satisfied, an integrable $f: {\mathbb R}^n \rightarrow {\mathbb C}$ (or, a Borel measure $\mu$ on ${\mathbb R}^n$) is used as the tempered distribution $T_f$ (or, $T_\mu$) where $\langle T_{f},\phi\rangle = \int_{{\mathbb R}^n} f({\mathbf x})\phi({\mathbf x}) d {\mathbf x}$ (or, $\langle T_{\mu}, \phi\rangle = \int_{{\mathbb R}^n} \phi({\mathbf x}) d \mu$). For $\Omega\subseteq \mathcal{S}({\mathbb R}^n)$, $\overline{\Omega}$ denotes the sequential closure of $\Omega$ with respect to standard topology of $\mathcal{S}({\mathbb R}^n)$. For $\Omega\subseteq \mathcal{S'}({\mathbb R}^n)$, $\overline{\Omega}^\ast$ denotes the sequential closure of $\Omega$ with respect to weak topology of $\mathcal{S'}({\mathbb R}^n)$. 
For $\psi\in \mathcal{S}({\mathbb R}^n), T\in \mathcal{S'}({\mathbb R}^n)$, the convolution is defined as a tempered distribution $\psi\ast T$ such that $\langle \psi\ast T, \phi\rangle = \langle T,\tilde{\psi}\ast \phi\rangle$ where $\tilde{\psi} ({\mathbf x}) = \psi(-{\mathbf x})$. If $T\in \mathcal{S'}({\mathbb R}^n)$ and a function $\psi$ is such that $\psi({\mathbf x})\phi({\mathbf x})\in \mathcal{S}({\mathbb R}^n)$ whenever $\phi({\mathbf x})\in \mathcal{S}({\mathbb R}^n)$, then the multiplication $\psi T$ is defined by $\langle \psi T, \phi\rangle = \langle T, \psi \phi\rangle$. 
Given a measure $\mu$, by $L_{2, \mu}({\mathbb R}^n)$ we denote the complex $L_2$-space with the inner product $\langle u, v \rangle_{L_{2,\mu}} = \int u ({\mathbf x})^\ast v({\mathbf x}) d \mu$. The induced norm is then $\Vert {\mathbf u}\Vert _{L_{2, \mu}} = \sqrt{\langle {\mathbf u},{\mathbf u} \rangle_{L_{2, \mu}} }$. If $d\mu = p({\mathbf x})d {\mathbf x}$, then $L_{2, \mu}$ is denoted by $L_{2, p}$.
A set of infinitely differentiable functions in ${\mathbb R}^n$ is denoted by $C^\infty ({\mathbb R}^n)$. A set of infinitely differentiable functions with compact support in ${\mathbb R}^n$ is denoted by $C_c^\infty ({\mathbb R}^n)$.  
If $T$ is a topological space, then a subset $S\subseteq T$ is said to be dense in $T$ if the sequential closure of $S$ is equal to $T$. 
For a square matrix $A$, ${\rm Tr}(A)$ denotes its trace and for an arbitrary matrix, $\Vert A\Vert _F \eqdef \sqrt{{\rm Tr}(A^T A)}$.
The identity matrix of size $n$ is denoted by $I_n$. The notation $f\propto g$ means $f=cg$ where $c$ is some universal constant.

\section{Basic function classes}\label{classes}
To formalize distributions supported in a $k$-dimensional subspace, we need a number of standard definitions~\cite{cmouhot}. For $\phi_1\in {\mathcal S}({\mathbb R}^{k})$ and $\phi_2\in {\mathcal S}({\mathbb R}^{n-k})$, their tensor product is the function $\phi_1\otimes \phi_2\in {\mathcal S}({\mathbb R}^{n})$ such that $(\phi_1\otimes \phi_2) ({\mathbf x}, {\mathbf y}) = \phi_1({\mathbf x}) \phi_2({\mathbf y})$. The span of $\{\phi_1\otimes \phi_2\vert  \phi_1\in {\mathcal S}({\mathbb R}^{k}), \phi_2\in {\mathcal S}({\mathbb R}^{n-k})\}$, denoted by ${\mathcal S}({\mathbb R}^{k})\otimes {\mathcal S}({\mathbb R}^{n-k})$, is called the tensor product of ${\mathcal S}({\mathbb R}^{k})$ and ${\mathcal S}({\mathbb R}^{n-k})$. For $g_1\in {\mathcal S}'({\mathbb R}^{k})$ and $g_2\in {\mathcal S}'({\mathbb R}^{n-k})$, their tensor product is defined by the following rule: $\langle g_1\otimes g_2, \phi_1\otimes \phi_2\rangle = \langle g_1, \phi_1\rangle \langle g_2, \phi_2\rangle$ for any $\phi_1\in {\mathcal S}({\mathbb R}^{k}), \phi_2\in {\mathcal S}({\mathbb R}^{n-k})$. Since $\overline{{\mathcal S}({\mathbb R}^{k})\otimes {\mathcal S}({\mathbb R}^{n-k})} = {\mathcal S}({\mathbb R}^{n})$, there is only one distribution $g_1\otimes g_2\in {\mathcal S}'({\mathbb R}^{n})$ that satisfies the identity.

An example of a generalized function, whose density is concentrated in a $k$-dimensional subspace, is any distribution that can be represented as
$
g\otimes \delta^{n-k} \eqdef g\otimes \underbrace{\delta \otimes \cdots \otimes \delta}_{\text{$n-k$ times}}
$
where $g\in {\mathcal S}'({\mathbb R}^{k})$. If $g = T_f$, where $f: {\mathbb R}^{k}\rightarrow {\mathbb R}$ is an ordinary function, then $g\otimes \delta^{n-k}$ can be understood as a generalized function whose density is concentrated in a subspace $\{{\mathbf x}\in {\mathbb R}^n\vert  x_i=0, i>k\}$ and equals $f({\mathbf x}_{1:k})$. It can be shown that the distribution acts on $\phi\in {\mathcal S}({\mathbb R}^{n})$ in the following way:
\begin{equation}
\langle T_f\otimes \delta^{n-k}, \phi \rangle = \int_{{\mathbb R}^{k}} f({\mathbf x}_{1:k}) \phi({\mathbf x}_{1:k}, {\mathbf 0}_{n-k}) d {\mathbf x}_{1:k}
\end{equation}
Now to generalize the latter definition to any $k$-dimensional subspace we have to introduce a change of variables in tempered distributions. 

Let $g\in {\mathcal S}'({\mathbb R}^{n})$ and $U\in {\mathbb R}^{n\times n}$ be an orthogonal matrix, i.e. $U^TU=I_n$. Then, $g_U \in {\mathcal S}'({\mathbb R}^{n})$ is defined by the rule: $\langle g_U, \phi\rangle  = \langle  g, \psi\rangle$ where $\psi({\mathbf x}) = \phi(U^T {\mathbf x})$. If $g = T_f$, the latter definition gives $g_U = T_{f'}$ where $f'({\mathbf x})= f(U{\mathbf x})$. Now, we define classes of tempered distributions:
\begin{equation}
\mathcal{G}'_k = \{(f\otimes \delta^{n-k})_U \vert  f\in {\mathcal S}'({\mathbb R}^{k}), U\in {\mathcal O}(n)\},
\end{equation}
\begin{equation}
\mathcal{G}_k = \left\{(T_f\otimes \delta^{n-k})_U \vert  f\in {\mathcal S}({\mathbb R}^{k}), U\in {\mathcal O}(n)\right\},
\end{equation}
and
\begin{equation}
\begin{split}
\mathcal{F}_k = \{T_r\mid   r({\mathbf x}) = f(U {\mathbf x}), f\in {\mathcal S}({\mathbb R}^{k}), 
U\in {\mathbb R}^{k\times n}, \rank(U) = k\} 
\end{split}
\end{equation}
where ${\mathcal O}(n) = \{U\in {\mathbb R}^{n\times n} \mid    U^TU=I_n\}$.
The first two classes are related as:
\begin{theorem}\label{easy}
$\mathcal{G}'_k = \overline{\mathcal{G}_k}^\ast$.
\end{theorem}
The last two classes are isomorphic under the Fourier transform. 
\begin{theorem}\label{g-dual-f}
$\mathcal{F}[\mathcal{G}_k] = \mathcal{F}_k$ and $\mathcal{F}^{-1}[\mathcal{F}_k] = \mathcal{G}_k$.
\end{theorem}
\begin{proof}
Let us prove first that if $g = T_f\otimes \delta^{n-k}$, then 
$\mathcal{F}[g] = T_r$,
where $r({\mathbf x}) = \hat f ({\mathbf x}_{1:k}), {\mathbf x}\in {\mathbb R}^n$. For that we have to prove that $\langle \mathcal{F}[g], \phi\rangle = \langle T_r, \phi\rangle$ for any $
\phi\in {\mathcal S}({\mathbb R}^{n})$. Indeed,
\begin{equation}
\begin{split}
\langle \mathcal{F}[g], \phi\rangle = \langle g, \mathcal{F} [\phi]\rangle = 
\langle T_f\otimes \delta^{n-k}, \int_{{\mathbb R}^n}\phi({\mathbf y})e^{-\mathrm{i}{\mathbf x}^T {\mathbf y}}d {\mathbf y}\rangle  = \\
\langle T_f, \int_{{\mathbb R}^n}\phi({\mathbf y})e^{-\mathrm{i}{\mathbf x}_{1:k}^T {\mathbf y}_{1:k}}d {\mathbf y}\rangle = 
\int_{{\mathbb R}^{n+k}}f({\mathbf x}_{1:k})\phi({\mathbf y})e^{-\mathrm{i}{\mathbf x}_{1:k}^T {\mathbf y}_{1:k}}d {\mathbf y} d {\mathbf x}_{1:k} =\\
\int_{{\mathbb R}^n}\hat f({\mathbf y}_{1:k})\phi({\mathbf y})d {\mathbf y} = \langle T_{r}, \phi\rangle .
\end{split}
\end{equation}
Let us calculate the image of $\mathcal{G}_k$ under the Fourier transform. It is easy to see that for any $g\in {\mathcal S}'({\mathbb R}^{n}), \phi \in {\mathcal S}({\mathbb R}^{n})$ and orthogonal $U\in {\mathbb R}^{n\times n}$ we have: 
\begin{equation}
\begin{split}
\langle \mathcal{F}[g_U], \phi({\mathbf x}) \rangle = \langle g_U, \mathcal{F} [\phi]({\mathbf x})\rangle = \langle g, \mathcal{F} [\phi](U^T{\mathbf x})\rangle  =  \\ 
\langle g, \mathcal{F} [\phi(U^T{\mathbf x})]\rangle  = \langle \mathcal{F}[g], \phi(U^T{\mathbf x})\rangle  =  \langle (\mathcal{F}[g])_U, \phi({\mathbf x})\rangle .
\end{split}
\end{equation}  
Therefore, $\mathcal{F}[g_U]  = (\mathcal{F}[g])_U$.
Thus, if $g = T_f\otimes \delta^{n-k}$, then 
\begin{equation}
\begin{split}
\left(\mathcal{F}[g_U]\right) = (T_r)_U = T_{r'}
\end{split}
\end{equation} 
where $r'({\mathbf x}) = r(U{\mathbf x}) = \hat f (U_k {\mathbf x})$ and $U_k\in {\mathbb R}^{k\times n}$ is a matrix consisting of first $k$ rows of $U$. Thus, $T_{r'}\in \mathcal{F}_k$. 

Let us show that by varying $f\in {\mathcal S}({\mathbb R}^{k})$ and $U$ in the expression $\hat f (U_k {\mathbf x})$ we can obtain any function from $\mathcal{F}_k$. For this, it is enough to show that $\mathcal{F}_k$ is equivalent to the following set of functions:
$$
\mathcal{Q} = \{g(U_k{\mathbf x})\vert  g\in {\mathcal S}({\mathbb R}^{k}), U_k \in {\mathbb R}^{k\times n}, U_k U_k^T = I_k\}
$$
The fact $\mathcal{Q}\subseteq \mathcal{F}_k$ is obvious.
Let us now prove that $\mathcal{Q} \supseteq \{g(P{\mathbf x})\vert  g\in {\mathcal S}({\mathbb R}^{k}), P\in {\mathbb R}^{k\times n}, \rank P = k\} = \mathcal{F}_k$. Indeed, if $f({\mathbf x}) = g(P{\mathbf x})$, then $f({\mathbf x}) = g'(U_k{\mathbf x})$ where $U_k = (PP^T)^{-1/2}P$ and $g'({\mathbf y}) = g((PP^T)^{1/2}{\mathbf y})$. By construction, $U_kU_k^T=I_k$ and $g'\in {\mathcal S}({\mathbb R}^{k})$. Thus, $\mathcal{Q}=\mathcal{F}_k$.

Therefore, $\mathcal{F}[\mathcal{G}_k] = \mathcal{F}_k$, and from the bijectivity of the Fourier transform we obtain $\mathcal{F}^{-1}[\mathcal{F}_k] = \mathcal{G}_k$.
\end{proof}


For any collection $f_1, \cdots, f_l\in  {\mathcal S}'({\mathbb R}^{n})$, ${\rm span}_{{\mathbb R}} \{f_i\}^l_{1}$ denotes $\{\sum_{i=1}^l \lambda_i f_i\vert  \lambda_i\in {\mathbb R}\}\subseteq {\mathcal S}'({\mathbb R}^{n})$, which is a linear space over ${\mathbb R}$.
The set $\mathcal{G}'_k$ has the following simple characterization:
\begin{theorem}\label{low-rank-char} For any $T\in {\mathcal S}'({\mathbb R}^{n})$, $T\in \mathcal{G}'_k$ if and only if 
\begin{equation}
\begin{split}
\dim {\rm span}_{{\mathbb R}} \{x_1 T, x_2 T, \cdots, x_n T\}\leq k.
\end{split}
\end{equation}
\end{theorem}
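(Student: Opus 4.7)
The plan is to reduce the characterization to a statement about which linear combinations of the coordinate multiplications annihilate a suitably rotated version of $T$. For any $S\in {\mathbb S}'({\mathbb R}^{n})$ and any orthogonal $V\in {\mathbb R}^{n\times n}$, a short unfolding of the definitions yields the ``transport identity''
\[
x_i\, S_V \;=\; \sum_{j=1}^n V_{ji}\, (x_j S)_V,
\]
the only computation being $(V^T x)_i = \sum_j V_{ji} x_j$ under the change of variables. This identity relates multiplications by the coordinate functions on $S_V$ to those on $S$.

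For the forward direction, let $T=(f\otimes\delta^{n-k})_U\in \mathcal{G}'_k$ and set $h=f\otimes\delta^{n-k}$. A direct check on test functions using $x_j\delta=0$ gives $x_j h=0$ whenever $j>k$, so the transport identity with $V=U$ yields $x_i T = \sum_{j=1}^k U_{ji}\,(x_j h)_U$, placing every $x_i T$ in the $\mathbb{R}$-span of the $k$ distributions $(x_1 h)_U,\ldots,(x_k h)_U$. For the converse, assume the rank condition. The $\mathbb{R}$-linear map $L\colon {\mathbb R}^n\to {\mathbb S}'({\mathbb R}^{n})$, $L(v)=\sum_i v_i x_i T$, has rank at most $k$, so $\dim\ker L\geq n-k$. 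I would pick an $(n-k)$-dimensional subspace $W_0\subseteq\ker L$, an orthonormal basis $u_{k+1},\ldots,u_n$ of $W_0$, and extend to an orthonormal basis $u_1,\ldots,u_n$ of ${\mathbb R}^n$; let $U$ be the orthogonal matrix with $i$-th row $u_i^T$. Setting $h:=T_{U^T}$ (so that $T=h_U$) and applying the transport identity with $V=U^T$ then gives $x_i h=(L(u_i))_{U^T}$, which vanishes for $i>k$ because $u_i\in W_0\subseteq\ker L$.

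It then remains to upgrade the coordinate-vanishing conditions on $h$ to the structural statement $h = f\otimes\delta^{n-k}$ for some $f\in {\mathbb S}'({\mathbb R}^k)$; this is the main technical step and the principal obstacle. I would prove it via the Fourier transform: the hypotheses $x_j h=0$ for $j>k$ become $\partial_{\xi_j}\hat h=0$ for $j>k$ (up to the factor $\mathrm{i}$), so $\hat h$ must be ``independent of the last $n-k$ variables,'' i.e.\ of the form $\hat f\otimes 1$ for some $\hat f\in {\mathbb S}'({\mathbb R}^k)$. Inverting via $\mathcal{F}^{-1}[1]\propto\delta^{n-k}$ gives $h=f\otimes\delta^{n-k}$ after absorbing the normalization constant into $f$, whence $T=h_U\in\mathcal{G}'_k$. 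The subtle part is the distributional Poincar\'e-type lemma needed to justify ``partials vanishing in a subset of variables implies tensor product with $1$ in those variables''; I would establish it rigorously by combining the classical one-variable statement $\partial_t F=0\Rightarrow F$ constant (in the distributional sense) with the density of ${\mathbb S}({\mathbb R}^k)\otimes {\mathbb S}({\mathbb R}^{n-k})$ in ${\mathbb S}({\mathbb R}^n)$ recorded in Section~\ref{Duality}.
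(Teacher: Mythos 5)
Your plan follows the paper's route almost exactly: in the forward direction you observe $x_j(f\otimes\delta^{n-k})=0$ for $j>k$ and push this through the rotation; in the converse you pick an $(n-k)$-dimensional subspace of the kernel of $v\mapsto\sum_i v_i x_i T$, rotate so that the annihilated directions become the last $n-k$ coordinates, and then invoke a lemma saying that $x_i h=0$ for $i>k$ forces $h=f\otimes\delta^{n-k}$. The ``transport identity'' is a tidy repackaging of the inline coordinate-change computations the paper does. The one substantive point is that the step you yourself flag as ``the main technical step and the principal obstacle'' is precisely the paper's Lemma~\ref{lemma} and constitutes essentially all of the proof's real work. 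Your sketch (Fourier transform, classical one-variable $\partial_t F=0\Rightarrow F$ const, density of $\mathbb{S}(\mathbb{R}^k)\otimes\mathbb{S}(\mathbb{R}^{n-k})$) is the right idea, but the density statement by itself does not close the argument: one must show that the candidate Schwartz-class ``primitive'' $r$ (built from a test function $\phi$ and an auxiliary bump $p$) is actually in $\mathbb{S}(\mathbb{R}^n)$, which requires the explicit decay and boundedness estimates that occupy most of the paper's lemma proof. So the approach is the same, but the hard part is only gestured at rather than carried out.
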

{\em Informally}, the theorem holds because any linear dependency  $\alpha_1 x_1 T+ \cdots + \alpha_n x_n T = 0$ over ${\mathbb R}$ implies that if $\alpha_1 x_1 + \cdots + \alpha_n x_n  \ne 0$, then $T=0$. This is equivalent to a statement that the support of $T$ is concentrated on a subspace $\alpha_1 x_1 + \cdots + \alpha_n x_n  = 0$. If $\dim {\rm span}_{{\mathbb R}} \{x_1 T, x_2 T, \cdots, x_n T\}\leq k$, then one can find $n-k$ such dependencies, which means that the support of $T$ is $k$-dimensional.

Let ${\mathcal B}({\mathbb R}^n)$ denote the Borel sigma-algebra on ${\mathbb R}^n$ and $\mathcal{P}$ denote a set of all Borel probability measures on ${\mathbb R}^n$. Let us now define 
\begin{equation}
\begin{split}
\mathcal{P}_k = 
\{\mu \in \mathcal{P}  \vert \exists{\mathbf v}_1, \cdots, {\mathbf v}_k\in {\mathbb R}^n,  
\forall A\in {\mathcal B}({\mathbb R}^n): 
\mu (A) = \mu(A\cap{\rm span}({\mathbf v}_1, \cdots, {\mathbf v}_k))\}
\end{split}
\end{equation}
i.e. $\mathcal{P}_k$ is a set of probability measures with all probability concentrated in some subspace ${\rm span}({\mathbf v}_1, \cdots, {\mathbf v}_k)$ whose dimension is not greater than $k$. It is easy to see that $T_\mu\in \mathcal{G}'_k$ for any $\mu\in \mathcal{P}_k $. 

\section{Examples of LDR formulations}\label{examples}
{\bf Maximum mean discrepancy PCA (MMD-PCA)}
Let $K:{\mathbb R}^n\times {\mathbb R}^n\to {\mathbb R}$ be a continuous Mercer kernel, and $\mathcal{H}_K$ be a reproducing kernel Hilbert space (RKHS) defined by $K$.
The kernel $K({\mathbf x}, {\mathbf y})$ defines the so-called kernel embedding of probability measures $\phi$~\cite{KernelMean}:
\begin{equation}
\mu \in \mathcal{P} \mathop\rightarrow\limits^{\phi} {\mathbb E}_{{\mathbf y}\sim \mu}K({\mathbf x},{\mathbf y})
= \int K({\mathbf x},{\mathbf y}) d\mu({\mathbf y}).
\end{equation}
The Maximum Mean Discrepancy (MMD) distance~\cite{MMD} is defined as the distance induced by metrics on
$\mathcal{H}_K$, i.e. for two probability measures $\mu, \nu \in \mathcal{P}$,
\begin{equation}
d_{\textsc{MMD}}(\mu, \nu) = \Vert \phi(\mu) -\phi(\nu)\Vert _{\mathcal{H}_K}.
\end{equation}

Let ${\mathbf x}_1, \cdots, {\mathbf x}_N\in {\mathbb R}^n$ be the dataset of points.
This dataset defines the empirical probabilistic measure $\mu_{\rm{data}}$ that corresponds to the tempered distribution 
$
T_{\mu_{\rm{data}}} = \frac{1}{N} \sum_{i=1}^N \delta^n({\mathbf x}-{\mathbf x}_i)
$.
We shall study a method concurrent to PCA that is based on solving the following problem:
\begin{equation}\label{MMD-task}
\min_{\nu\in \mathcal{P}_k} d_{\textsc{MMD}}(\mu_{\rm{data}}, \nu) = \min_{\nu\in \mathcal{P}_k} 
\Vert \phi(\mu_{\rm{data}}) -\phi(\nu)\Vert _{\mathcal{H}_K} 
\end{equation}
i.e. we shall attempt to approximate the empirical probabilistic measure $\mu_{\rm{data}}$ with another probabilistic measure $\nu$ which is supported in some $k$-dimensional subspace of ${\mathbb R}^n$. 
To our knowledge, the task~\eqref{MMD-task} has not been yet considered in the research field of LDR.
\begin{example}[Gaussian MMD-PCA]\label{observation} Let $k({\mathbf x}) = G^n_h ({\mathbf x})$ 
where
$G^n_h ({\mathbf x}) = \frac{e^{-\frac{\| {\mathbf x}\|^2}{2h^2}}}{(2\pi h^2)^{n/2}}$
is the radial Gaussian kernel on  ${\mathbb R}^n$ and $K({\mathbf x}, {\mathbf y}) = (k\ast k)({\mathbf x}-{\mathbf y}) = G^n_{2h} ({\mathbf x})$. For such a kernel, we have
\begin{equation}
d_{\textsc{MMD}}(\mu, \nu) = \Vert \psi(\mu) -\psi(\nu)\Vert _{L_2({\mathbb R}^n)},
\end{equation}
where $\psi(\mu) = \int k({\mathbf x}-{\mathbf y}) d\mu({\mathbf y})$ is just a smoothing of the distribution $\mu$ via the Weierstrass trasform.

In this example, as $h\to +0$, the optimal measure $\nu^\ast = \arg\min_{\nu\in  \mathcal{P}_k} \Vert \psi(\mu_{\rm{data}}) -\psi(\nu)\Vert _{L_2({\mathbb R}^n)}$ is supported in a $k$-dimensional subspace that contains the largest possible number of points from $\{{\mathbf x}_1, \cdots, {\mathbf x}_N\}$. 
Khachiyan demonstrated~\cite{Khachiyan} that the following problem is NP-hard: given $\{{\mathbf x}_1, \cdots, {\mathbf x}_N\}\subseteq {\mathbb R}^n$, find an $n-1$-dimensional subspace of ${\mathbb R}^n$ that contains at least $(1-\varepsilon)(1-\frac{1}{n})N$ points from the dataset. This indicates that in the regime $h\to +0$, the task~\eqref{MMD-task} is NP-hard. In other words, it is unlikely that the task admits an efficient algorithm, in general. In~\ref{numerical-mmd} we describe an algorithm for the Gaussian MMD-PCA.
\end{example}

\ifTR
{\em The dual form.} Let us define another Gaussian kernel $\gamma({\mathbf x}) =  e^{-\frac{h^2\vert {\mathbf x}\vert ^2}{2}} = \mathcal{F}[k]$. 
Let $p_{\rm{data}}({\mathbf x})$ denote the characteristic function of the random vector ${\mathbf X}_{\rm{data}}\sim\mu_{\rm{data}}$. By definition, $p_{\rm{data}}({\mathbf x}) = {\mathbb E}[e^{{\rm i}{\mathbf X}_{\rm{data}}^T{\mathbf x}}] = \frac{1}{N}\sum_{i=1}^N e^{{\rm i}{\mathbf x}^T_i{\mathbf x}}$.
Thus, $p_{\rm{data}} \propto \mathcal{F}^{-1}[\mu_{\rm{data}}]$ and $\mu_{\rm{data}} \propto \mathcal{F}[p_{\rm{data}}]$.

Using the isometry property of the Fourier transform for $L_2({\mathbb R}^n)$ and the convolution theorem, we see that:
\begin{equation}
d_{\textsc{MMD}}(\mu, \nu) = \Vert k \ast \mu -k\ast \nu\Vert _{L_2({\mathbb R}^n)}
\propto \Vert \gamma({\mathbf x}) (\mathcal{F}[\mu]({\mathbf x}) - \mathcal{F}[\nu]({\mathbf x}))\Vert _{L_{2}({\mathbb R}^n)}
\end{equation}
Thus, from Theorem~\ref{inclusion} we obtain that the task~\ref{MMD-task} is equivalent to:
\begin{equation}\label{dual}
\Vert p_{\rm{data}} - q\Vert _{L_{2, \gamma^2}({\mathbb R}^n)}  \rightarrow \min_{q\in \mathcal{M}_k}
\end{equation}
\else
\fi

{\bf The higher moments PCA (HM-MMD-PCA)}
Another natural approach to measuring the similarity of two distributions is based on the difference between moments:
\begin{equation}
d_{\textsc{HM}}(\mu, \nu)^2 = \sum_{s=1}^4\frac{\lambda_s}{n^s}\sum_{1\leq i_1, \cdots, i_s\leq n}(m_{i_1 \cdots i_s}-n_{i_1 \cdots i_s})^2
\end{equation}
where $m_{i_1 \cdots i_s} = {\mathbb E}_{{\mathbf X}\sim \mu}\left[{\mathbf X}[{i_1}]\cdots {\mathbf X}[{i_s}]\right]$ and $n_{i_1 \cdots i_s} = {\mathbb E}_{{\mathbf X}\sim \nu}\left[{\mathbf X}[{i_1}]\cdots {\mathbf X}[{i_s}]\right]$ are corresponding moments. The positive parameters $\lambda_1$, $\lambda_2$, $\lambda_3$, $\lambda_4$ are chosen to fix the relative importance of the mean, the co-variance, the co-skewness and the co-kurtosis. 

Thus, we will be interested in the following optimization task (analogous to~\ref{MMD-task}):
\begin{equation}\label{HM-task}
\min_{\nu\in \mathcal{P}_k} d_{\textsc{HM}}(\mu_{\rm{data}}, \nu)
\end{equation}

If we set $\lambda_2=1$ and $\lambda_1=\lambda_3=\lambda_4=0$, then the solution of the task~\eqref{HM-task} coinsides with the solution of the classical PCA. 
Let us briefly demonstrate that. Let $X=[{\mathbf x}_1, \cdots, {\mathbf x}_N]$ be the data matrix, $Y=[{\mathbf y}_1, \cdots, {\mathbf y}_N]$ be the SVD of $X$ truncated at $k$-th term, $\sigma_i(X)$ be an $i$th singular value of $X$. By $\mu_{\rm{pca}}$ we denote a probabilistic measure concentrated in points $\{{\mathbf y}_i\}_{i=1}^N$.
In that case we have $d_{\textsc{HM}}(\mu_{\rm{data}}, \mu_{\rm{pca}})^2 = \|\frac{1}{N} XX^T-\frac{1}{N}YY^T\|_F^2 = \frac{1}{N^2}\sum_{i=k+1}^{\min\{N, n\}}\sigma_i^4(X)$. But for any $\nu\in \mathcal{P}_k$ the covariance matrix ${\rm cov}(\nu) = [{\mathbb E}_{{\mathbf x}\sim \nu} x_i x_j]_{i,j\in [n]}$ is of rank $k$. Therefore, by Eckart-Young-Mirsky's theorem, we have $d_{\textsc{HM}}(\mu_{\rm{data}}, \nu)^2 = \|\frac{1}{N}XX^T-{\rm cov}(\nu)\|^2_F\geq \frac{1}{N^2}\sum_{i=k+1}^{\min\{N, n\}}\sigma_i^4(X)$. Thus, the minimum of $d_{\textsc{HM}}(\mu_{\rm{data}}, \nu)$ is attained at $\nu = \mu_{\rm{pca}}$.

Thus, the task~\eqref{HM-task} can be considered as a direct generalization of PCA that takes into account higher moments. Note that the distance based on higher moments is a special case of maximum mean discrepance metric, where  $K({\mathbf x}, {\mathbf y}) =  \sum_{s=1}^4\frac{\lambda_s}{n^s} ({\mathbf x}\cdot {\mathbf y})^s$. That is why we denote the task as HM-MMD-PCA. In Section~\ref{Approximate-MMD-PCA} we prove that there is an efficient 2-approximating algorithm for the HM-MMD-PCA.
In~\ref{numerical-hm} we additionally describe another algorithm for the HM-MMD-PCA based on a generic alternating scheme.

\ifTR
{\em The dual form.} Due to a well-known relationship between moments of the probability measure $\mu$ and its characteristic function $p$, i.e. ${\rm i}^s m_{i_1 \cdots i_s} = \frac{\partial^s p ({\mathbf 0})}{\partial x_{i_1}\cdots \partial x_{i_s}}$, the task~\ref{HM-task} is equivalent to:
\begin{equation}\label{HM-task-dual}
\sum_{s=1}^4\frac{\lambda_s}{n^s}\sum_{1\leq i_1, \cdots, i_s\leq n}\vert \frac{\partial^s p_{\rm data}({\mathbf 0})}{\partial x_{i_1}\cdots \partial x_{i_s}}-\frac{\partial^s q({\mathbf 0})}{\partial x_{i_1}\cdots \partial x_{i_s}}\vert ^2 \rightarrow \min_{q\in \mathcal{M}_k}
\end{equation}
Note that the maximum mean discrepancy distance and the distance based on higher moments are substantially different. Indeed, even if we set $h$ as a large value (which makes $\frac{1}{h}\approx 0$), the MMD distance, unlike the HM distance, neglects higher order derivatives of the characteristic functions in the neighborhood of the origin. 
\else
\fi
{\bf Wasserstein distance PCA (WD-PCA)}
Another significant distance between probability measures with the origins in the transport theory is the Wasserstein distance (see~\cite{villani2008optimal}). 

Let $({\mathbb R}^n, \Vert \cdot \Vert )$ be a 
Banach space and $p\geq 1$. Between any two Borel probability measures $\mu, \nu$ on ${\mathbb R}^n$ with $\int \Vert {\mathbf x}\Vert^p  d\mu < \infty$ and $\int \Vert {\mathbf x}\Vert^p  d\nu < \infty$ the $p$th Wasserstein distance is:
\begin{equation}
W_p (\mu, \nu) = (\inf_{\pi\in \Pi (\mu, \nu)} \int \Vert {\mathbf x} - {\mathbf y}\Vert^p  d\pi)^{1/p}
\end{equation}
where $\Pi (\mu, \nu)$ is a set of all couplings of $\mu$ and $\nu$. 
The Wasserstein distance defines another version of LDR problem:
\begin{equation}\label{Wasser-task}
\min_{\nu\in \mathcal{P}_k} W_p (\mu_{\rm{data}}, \nu) 
\end{equation}

In the~\ref{WD-Villani} one can find proofs that in the case of $l_1$ norm $\Vert {\mathbf x}\Vert =\sum_{i}\vert x_i\vert $ and $p=1$, the task~\eqref{Wasser-task} corresponds to the well-studied {\em robust PCA} problem~\cite{Candes}.  If, instead of the $l_1$-norm, we use the $l_2$-norm and set $p=1$, this leads to another well-studied task, which is known as {\em the outlier pursuit} problem~\cite{R1-PCA,NIPS2010_4005}. In the case of the $l_2$-norm and a general $p\geq 1$ we obtain {\em the $l_p$ subspace approximation problem}~\cite{deshpande_et_al,AmitVishnoi}. Note that, except for the $l_2$ subspace approximation problem, all these problems are NP-hard. In~\ref{numerical-wd} we describe an algorithm for the WD-PCA in the  case of $l_2$-norm and $p=1$.

\ifTR
Now, let us consider another optimization problem: for a given $X \in {\mathbb R}^{n\times N}$ solve
\begin{equation}\label{robust-pca}
\Vert X-L\Vert \rightarrow \min_{{\rm rank} (L)\leq k}
\end{equation}
where $\Vert \cdot\Vert $ is extended to ${\mathbb R}^{n\times N}$ by $\Vert [{\mathbf s}_1, \cdots, {\mathbf s}_N]\Vert  \eqdef \sum_{i} \Vert {\mathbf s}_i\Vert $.

The following simple theorem shows that the two tasks are connected so that one's solution directly leads to another's solution.
\begin{theorem}\label{transport}
Given data points $\{{\mathbf x}_1, \cdots, {\mathbf x}_N\}$, let $X = [{\mathbf x}_1, \cdots, {\mathbf x}_N]\in {\mathbb R}^{n\times N}$. Then, 
$$\min_{\nu\in {\mathcal P}_k}W (\mu_{\rm{data}}, \nu) = \frac{1}{N} \min_{Y\in {\mathbb R}^{n\times N}, {\rm rank}(Y)\leq k}{\Vert X-Y\Vert }$$
Moreover, $\min_{\nu\in {\mathcal P}_k}W (\mu_{\rm{data}}, \nu)$ is attained on $\nu^\ast$, where $\nu^\ast$ is a uniform distribution over $\{{\mathbf y}_i\}_{i=1}^N$ and $[{\mathbf y}_1, \cdots, {\mathbf y}_N] = \arg\min_{Y\in {\mathbb R}^{n\times N}, {\rm rank}(Y)\leq k}{\Vert X-Y\Vert }$.
\end{theorem}

Note that the case of $L_1$ norm $\Vert {\mathbf x}\Vert =\sum_{i}\vert x_i\vert $ in the task~\ref{robust-pca} corresponds to the well-studied {\em robust PCA} problem~\cite{Candes}. 
If, instead of the $L_1$-norm, we use the $L_2$-norm, this leads to another task:
\begin{equation}\label{outlier-pursuit}
\Vert X-L\Vert _{1,2}\rightarrow \min_{{\rm rank} (L)\leq k}
\end{equation}
where $\Vert S\Vert _{1,2} = \sum_{j}\sqrt{\sum_{i}s^2_{ij}}$, which known as {\em the outlier pursuit} problem~\cite{NIPS2010_4005}.
\else
\fi

{\bf Sufficient dimension reduction with optimized regression function (SDR-ORF).}
Given a labeled dataset $\{({\mathbf x}_i, y_i)\}_{i=1}^N$ where ${\mathbf x}_i\in {\mathbb R}^n, y_i \in {\mathcal C}$ (${\mathcal C}$ is a finite set of classes for a classification, or ${\mathbb R}$ for a regression problem), the sufficient dimension reduction problem can be informally described as a problem of finding vectors ${\mathbf w}_1, \cdots, {\mathbf w}_k\in {\mathbb R}^{n}$ such that conditional distributions satisfy $p(y\vert  {\mathbf w}^T_1 {\mathbf x}, \cdots, {\mathbf w}^T_k {\mathbf x})\approx p(y\vert  {\mathbf x})$ (possibly, under some additional assumptions on the form of $p(y\vert  {\mathbf x})$). 

We formulate the SDR-ORF problem as an optimization task:
\begin{equation}\label{sdr-problem}
\inf_{f\in \mathcal{F}_k} J(f)
\end{equation}
The object $f: {\mathbb R}^n\rightarrow {\mathbb R}$ is a smooth real-valued function. 
We assume that $f$ is a candidate for the regression function and $J(f)$ is a cost function that values how strongly $f$ fits in this role. In practice for the regression case and for the binary classification case with 0-1 outputs we use the following cost functions correspondingly:
\begin{equation}
\begin{split}
J(f) = \frac{1}{ N}\sum_{i=1}^N {\mathbb E}_{\boldsymbol{\epsilon} \sim N({\mathbf 0}, \upsilon^2 I_n)} \vert y_i-f({\mathbf x}_i+\boldsymbol{\epsilon})\vert ^2
\end{split}
\end{equation}
and
\begin{equation}
\begin{split}
J(f) = \frac{1}{ N}\sum_{i=1}^N {\mathbb E}_{\boldsymbol{\epsilon} \sim N({\mathbf 0}, \upsilon^2 I_n)} H\left(y_i, \frac{e^{f({\mathbf x}_i+\boldsymbol{\epsilon})}}{1+e^{f({\mathbf x}_i+\boldsymbol{\epsilon})}}\right)
\end{split}
\end{equation}
where $H (y, p) = - y \log p - (1-y) \log (1-p)$ and $\upsilon>0$ is a parameter. 

By requiring $f\in \mathcal{F}_k$, we assume that the regression function $f$ satisfies (for $k$ fixed in advance):
$
f({\mathbf x}) = g({\mathbf w}^T_1{\mathbf x}, \cdots, {\mathbf w}^T_k{\mathbf x})
$,
where ${\mathbf w}_1, \cdots, {\mathbf w}_k\in {\mathbb R}^n$. Thus, given an input ${\mathbf x}$, an output of $f$ depends on the projection of ${\mathbf x}$ onto ${\rm span}({\mathbf w}_1, \cdots, {\mathbf w}_k)$. The set ${\rm span}({\mathbf w}_1, \cdots, {\mathbf w}_k)$ is called the effective subspace. 
In~\ref{numerical-sdr} we describe an algorithm for the SDR-ORF problem.
\section{Reduction of the optimization problem to ordinary functions}\label{HowOpt}
The central problem that our paper addresses is the optimization of an objective function over $\mathcal{G}'_k$? 
In this section we suggest an approach based on penalty functions and kernels.

\subsection{The definition of the penalty function}
In this subsection we introduce a penalty function $R(f)$. 
Let $M:{\mathbb R}^n\times {\mathbb R}^n\to {\mathbb C}$\footnote{Throughout the paper the kernel that induces the MMD distance is denoted by $K$ and the kernel that is used to define a penalty is denoted by $M$.} be some bounded function such that $[M({\mathbf z}_i, {\mathbf z}_j)]_{i,j\in [x]}$ is a positive semidefinite matrix for any $\{{\mathbf z}_i\}_{i\in [x]}\subseteq {\mathbb R}^n, x\in {\mathbb N}$. For $f, g: {\mathbb R}^n\rightarrow {\mathbb C}$ let us denote
\begin{equation}
\langle f \vert  M \vert  g \rangle = \iint_{{\mathbb R}^n\times {\mathbb R}^n} f({\mathbf x})^\ast M({\mathbf x}, {\mathbf y}) g({\mathbf y}) d {\mathbf x} d {\mathbf y}.
\end{equation}

For $f,g \in L_1 ({\mathbb R}^n)$, 
\begin{equation}
\langle f \vert  M \vert  g \rangle \leq \sup_{{\mathbf x}, {\mathbf y}}|M({\mathbf x}, {\mathbf y})| \cdot \Vert f\Vert _{L_1}\Vert g\Vert _{L_1} < \infty.
\end{equation} 
For general $f,g \in {\mathcal S}' ({\mathbb R}^n)$ the expression $\langle f \vert  M \vert  g \rangle$ is defined if there are $f_\epsilon, g_\epsilon\in L_1 ({\mathbb R}^n)$ such that $T_{f_\epsilon} = f\ast G^n_\epsilon$, $T_{g_\epsilon} = g\ast G^n_\epsilon$ and $\lim_{\epsilon\rightarrow 0}\langle f_\epsilon \vert  M \vert  g_\epsilon \rangle  = A<\infty$. Then, $\langle f \vert  M \vert  g \rangle \eqdef A$. For example, for continuous $M$ we have $\langle \delta^n \vert  M \vert  \delta^n \rangle = M(0,0)$.

One can build a Gram matrix from the collection of functions $\{x_if\}_{i=1}^n$, $\begin{bmatrix}
\langle x_i f\vert  M \vert  x_j f\rangle
\end{bmatrix}_{1\leq i, j \leq n}$. Let us denote a real part of the Gram matrix $$\begin{bmatrix} 
\langle x_i f\vert  M \vert x_j f\rangle
\end{bmatrix}_{1\leq i, j \leq n}$$ by $M_f$. 

Theorem~\ref{low-rank-char} concludes, from $f\in \mathcal{G}_k$, that 
$\dim {\rm span}_{\mathbb R} \{x_1 f, x_2 f, \cdots, x_n f\}\leq k$. 
\begin{theorem}\label{rank-k}
Let $M({\mathbf x}, {\mathbf y})$ be a bounded Lipschitz function. If $f=(T_g\otimes \delta^{n-k})_U\in \mathcal{G}'_k$ is such that $\{x_i g\}_{i=1}^k\subseteq L_1({\mathbb R}^k)$,
then $\langle x_i f\vert  M \vert  x_j f\rangle$ is defined and $\rank M_f \leq k$.
\end{theorem}
 
\begin{definition} Let $A\in {\mathbb R}^{n\times n}$ be a positive semidefinite matrix with eigenvalues $\lambda_1 \geq \lambda_2 \geq \cdots \geq \lambda_n$ (with counting multiplicities). Then, the Ky Fan $k$--anti-norm of $A$ is $\Vert A\Vert _k = \sum_{i=1}^k \lambda_{n+1-k}$.
\end{definition}
Let 
\begin{equation}\label{R-to-M}
R(f) = \Vert M_f\Vert _{n-k}.
\end{equation}
By construction, by penalizing the value of $R(f)$, we enforce $M_f$ to be close to some matrix of rank $k$. Equivalently, we enforce a real part of the Gram matrix of $\{x_i f\}_{i\in [n]}$ to be of close to a rank $k$ matrix. By Theorem~\ref{low-rank-char}, the condition 
$\dim {\rm span}_{\mathbb R} \left\{x_1 f, x_2 f, \cdots, x_n f\right\}\leq k$ implies $f\in \mathcal{G}'_k$, therefore, we enforce $f$ to be close to some function from $\mathcal{G}'_k$. In the next section we will justify the latter informal logic by reducing the optimization over $\mathcal{G}'_k$ to the optimization  over ordinary functions with the penalty function $R(f)$. 

\subsection{Proper kernels}
For a function $M({\mathbf x}, {\mathbf y}): {\mathbb R}^n \times {\mathbb R}^n\rightarrow {\mathbb C}$, let us denote by ${\rm O}_M$ a linear operator between ${\rm Dom}({\rm O}_M)$ and $L_2({\mathbb R}^n)$ given by ${\rm O}_M[f] = \int_{{\mathbb R}^n} M({\mathbf x}, {\mathbf y}) f({\mathbf y}) d {\mathbf y}$ where ${\rm Dom}({\rm O}_M) = \{f\in L_2({\mathbb R}^n) \mid {\rm O}_M[f] \in L_2({\mathbb R}^n)\}$. 
For any operator $O$ between spaces $\mathcal{H}_1$ and $\mathcal{H}_2$, we denote its range by ${\rm Range\,} [O] = \{O(x)\vert  x\in \mathcal{H}_1\}$.

\begin{definition} The function $M({\mathbf x}, {\mathbf y}): {\mathbb R}^n \times {\mathbb R}^n\rightarrow {\mathbb C}$ is called the {\em proper kernel} if and only if 
\begin{enumerate}
\item ${\rm O}_M: L_2({\mathbb R}^n)\to L_2({\mathbb R}^n)$ is a properly defined, strictly positive and self-adjoint operator,
\item $\max_{{\mathbf x}, {\mathbf y}}\vert M({\mathbf x}, {\mathbf y})\vert  < \infty$, 
\item $\overline{{\rm Range\,} [{\rm O}_M]\cap {\mathcal S} ({\mathbb R}^n)} = {\mathcal S} ({\mathbb R}^n)$.
\end{enumerate}
\end{definition}
Note that the latter definition implies that $M({\mathbf y}, {\mathbf x}) = M({\mathbf x}, {\mathbf y})^\ast$ (modulo some null set) and $\langle f, {\rm O}_M [f]\rangle_{L_2({\mathbb R}^n)} > 0, \forall f\in L_2({\mathbb R}^n), f\ne {\mathbf 0}$.
\begin{example} 
The Gaussian kernel is of special interest in applications:
$
M({\mathbf x}, {\mathbf y}) = G^n_\sigma ({\mathbf x}-{\mathbf y}).
$
\end{example} 
It is captured by the following lemma:
\begin{lemma} If $\zeta, \hat{\zeta}\in C ({\mathbb R}^n)$ are bounded, $\forall {\mathbf x}\,\, \hat{\zeta}({\mathbf x})> 0$, then $M({\mathbf x}, {\mathbf y}) = \zeta({\mathbf x}-{\mathbf y}) $ is a proper kernel.
\end{lemma}
\begin{proof}
Verification of the first three conditions is easy, so we only check the fourth condition.
Let us denote linear operators $C_{\zeta}[f] = \zeta\ast f$ and $O_{g}[f]({\mathbf x}) = g({\mathbf x})f({\mathbf x})$. Then we have $\mathcal{F}[C_{\zeta}[L_{2}({\mathbb R}^n)]] = O_{\hat{\zeta}}[L_{2}({\mathbb R}^n)]\supseteq C_c^\infty ({\mathbb R}^n)$. Therefore, ${\rm Range\,} [{\rm O}_M] = C_{\zeta}[L_{2}({\mathbb R}^n)]\supseteq \mathcal{F}^{-1}[C_c^\infty ({\mathbb R}^n)]$. Since $C_c^\infty ({\mathbb R}^n)$ is dense in ${\mathcal S} ({\mathbb R}^n)$, then $\mathcal{F}^{-1}[C_c^\infty ({\mathbb R}^n)]$ also has this property. Thus, $\overline{{\rm Range\,} [{\rm O}_M]\cap {\mathcal S} ({\mathbb R}^n)} = {\mathcal S} ({\mathbb R}^n)$.
\end{proof}

Besides the Gaussian kernel the lemma also captures a case of the Laplace kernel $\zeta({\mathbf x}) = e^{-\vert {\mathbf x}\vert }$. It is well-known that the Fourier tranform of the Laplace kernel is the Poisson kernel: $\hat{\zeta}({\mathbf x}) = \frac{c_n}{(1+\vert {\mathbf x}\vert ^2)^{\frac{n+1}{2}}}$ (which is also proper). 


For $I: \mathcal{G}'_k\cup {\mathcal S} ({\mathbb R}^n)\rightarrow {\mathbb R}^+$, it is natural to reduce the optimization task {\em over tempered distributions}
\begin{equation}\label{main-task}
I(f)\rightarrow \min\limits_{f\in \mathcal{G}'_k}
\end{equation}
to an optimization task {\em over ordinary functions with a penalty term} $R$,
\begin{equation}\label{sequence}
I(f)+\lambda \Vert M_f\Vert _{n-k} = I(f)+\lambda R(f) \rightarrow \inf\limits_{f\in \mathfrak{F}},
\end{equation}
where we assume that the set of functions $\mathfrak{F}$ is rich enough  to approximate weakly solutions of~\eqref{main-task}, i.e. $\overline{\mathfrak{F}}^\ast \supset \mathcal{G}'_k$. Since we cannot guarantee that the minimum in~\eqref{sequence} is attainable, we substitute it by infimum. For this reduction to be effective it is desirable to have the following property: if a sequence $\{f_{n}\}\subset \mathfrak{F}$ is such that $I(f_n)+\lambda_n R(f_n)-\inf\limits_{f\in \mathfrak{F}}\left(I(f)+\lambda_n R(f)\right)\rightarrow +0$ for $\lambda_n\mathop\rightarrow\limits^{n\rightarrow \infty} +\infty$ (i.e. $\{f_n\}$ solves~\eqref{sequence} for arbitrarily large values of the regularization parameter), then there exists a growing subsequence $\{n_k\}$ such that $T_{f_{n_k}}\rightarrow^\ast T$ (weakly) where $T$ is a solution of~\eqref{main-task}. 

We make a thorough theoretical analysis of the case  $\mathfrak{F} = {\mathcal S} ({\mathbb R}^n)$. If to formulate in a simplified way, for the last property to hold, the sequence ${\rm Tr} (M_{f_n})$ should be bounded.  
Details on the conditions under which this reduction holds can be found in the following subsection. 

\subsection{Regular solutions and reduction theorems for $\mathfrak{F} = {\mathcal S} ({\mathbb R}^n)$}\label{generaltheory}
 For a sequence $\{f_s\}^\infty_{s=1}\subseteq {\mathcal S}' ({\mathbb R}^n)$, $\mathop{\rm Lim}\limits_{s\rightarrow \infty} f_s$ denotes a set of points $f\in {\mathcal S}' ({\mathbb R}^n)$, such that there exists a growing sequence $\{s_i\}\subseteq {\mathbb N}$ and $\lim_{i\rightarrow \infty} f_{s_i} = f$.

For $I: \mathcal{G}'_k\cup {\mathcal S} ({\mathbb R}^n)\rightarrow {\mathbb R}^+$, it is natural to reduce the optimization task~\eqref{main-task}
to an optimization task over ordinary functions with a penalty term~\eqref{sequence}.
To have an equivalence between~\eqref{main-task} and~\eqref{sequence} we need to assume that $I$'s behaviour when approaching $f\in \mathcal{G}'_k$ from a set ${\mathcal S} ({\mathbb R}^n)$ is continuous, i.e. for any sequence $\{f_i\}\subseteq {\mathcal S} ({\mathbb R}^n)$ such that $T_{f_i}\rightarrow^\ast f\in \mathcal{G}'_k$, we have $\lim_{i\rightarrow \infty} I(T_{f_i}) = I(f)$.

Let us introduce the notion of a regular solution both for~\eqref{main-task} and~\eqref{sequence}. Let 
\begin{equation}
\mathcal{B}_k = \bigcup_{C>0} \overline{\{f\in \mathcal{G}_k\vert  \Tr(M_{f})\leq C\}}^\ast.
\end{equation} 

\begin{definition} Any $f\in {\rm Arg} \min \limits_{f\in \mathcal{G}'_k} I(f)\bigcap \mathcal{B}_k$ is called a regular solution of~\eqref{main-task}.
\end{definition}
In other words, $\mathcal{B}_k$ formalizes a set of distributions from $\mathcal{G}'_k$, that can be approached through sequences $\{f_i\}\subseteq \mathcal{G}_k$, for which $\Tr(M_{f_i})$ does not blow up. Obviously, $\mathcal{G}_k\subseteq \mathcal{B}_k\subseteq \mathcal{G}'_k$. In applications, regular solutions include all ${\rm Arg} \min \limits_{f\in \mathcal{G}'_k} I(f)$ if we choose the kernel $M$ correctly. This regularity is important for a reduction to the penalty form~\eqref{sequence}, because when approaching a non-regular solution we are unable to guarantee a bounded behaviour of $M_f$ (and of $R(f)$). 
\begin{definition}
A sequence $\{f_i\}_1^\infty\subseteq {\mathcal S} ({\mathbb R}^n)$ is said to solve~\eqref{sequence} if 
\begin{equation}\label{f-sequence}
I(f_i)+\lambda_i R(f_i) \leq  \inf\limits_{f\in {\mathcal S} ({\mathbb R}^n)}I(f)+\lambda_i R(f) +\epsilon_i
\end{equation}
where $\epsilon_i\rightarrow +0$ and $\lambda_i\rightarrow +\infty, i\rightarrow +\infty$. If, additionally, $\Tr(M_{f_i})$ is bounded, then $\{f_i\}_1^\infty$ is said to solve~\eqref{sequence} regularly.
\end{definition}
Let us define
\begin{equation}
{\rm rsol\,} (I(f), R(f)) = \bigcup_{\{f_i\}_1^\infty\rm{\,r.\,solves\,(11)}} \mathop{\rm Lim}\limits_{i\rightarrow \infty} T_{f_i}.
\end{equation}

\begin{theorem}\label{metrization} If $M$ is a proper kernel, then
${\rm rsol\,} (I(f), R(f)) \subseteq {\rm Arg} \min \limits_{f\in \mathcal{G}'_k} I(f)$. 
\end{theorem}
\begin{theorem}\label{existence} If $M$ is a proper kernel and ${\rm rsol\,} (I(f), R(f))\ne \emptyset$, then
$${\rm Arg} \min \limits_{f\in \mathcal{G}'_k} I(f) \bigcap \mathcal{B}_k \subseteq {\rm rsol\,} (I(f), R(f)).$$ 
\end{theorem}

\begin{theorem}[Reduction theorem]
If $M$ is a proper kernel, ${\rm Arg} \min \limits_{f\in \mathcal{G}'_k} I(f)\subseteq \mathcal{B}_k$ and ${\rm rsol\,} (I(f), R(f))\ne \emptyset$, then
${\rm rsol\,} (I(f), R(f)) = {\rm Arg} \min \limits_{f\in \mathcal{G}'_k} I(f)$.  
\end{theorem}

Suppose that we now solve a sequence of problems~\eqref{sequence} and find $\{f_s\}_1^\infty$. According to Theorems~\ref{metrization} and~\ref{existence}, the following are potential scenarios:

(1) $\Tr(M_{f_s})$ blows up and the convergence is not guaranteed. This situation can be avoided by controlling $\Tr(M_{f})$ in an optimization process. In practice, when $f$ has a parameterized form, this can be done by bounding parameters.

If $\Tr(M_{f_s})$ does not blow up, we still have two subcases:

(2.1) $\mathop{\rm Lim}\limits_{s\rightarrow \infty} T_{f_s}\ne \emptyset$. This implies a positive outcome to approach~\eqref{sequence} to the optimization problem, Problem~\eqref{main-task}.

(2.2) $\mathop{\rm Lim}\limits_{s\rightarrow \infty} T_{f_s} = \emptyset$. This exotic situation can happen only if a sequence $T_{f_s}$ leaves any sequentially compact subset of ${\mathcal S}' ({\mathbb R}^n)$. Bounding parameters also tackles this case.

Let us now concentrate on the task~\eqref{sequence} and describe the alternating scheme for its solution.

\section{The alternating scheme}\label{mod-alt-right}
We will concentrate on problem~\eqref{sequence}. It is known~\cite{HIAI20131568} that the Ky Fan anti-norm is a concave function, i.e. $R(\phi) = \Vert M_\phi\Vert _{n-k}$ depends on $M_\phi$ in a concave way. It can be shown that the dependence of $R(\phi)$ on $\phi$ is both non-convex and non-concave, i.e. we deal with a non-convex optimization task.

\ifTR
Thus, even if we define $I(\phi)$ as a convex function, an optimization task
\begin{equation}\label{task0}
I(\phi)\rightarrow \min_{\phi: R(\phi)\leq \epsilon}
\end{equation}
{\em will not be dual} to its penalty form: 
\begin{equation}\label{task}
I(\phi)+\lambda R(\phi) \rightarrow \min_{\phi}
\end{equation}
since $R(\phi)$ is not a convex function of $\phi$. 
Though, it is well-known that the constrained optimization over a finite dimensional space can be reduced to its penalty form for a sufficiently large coefficient $\lambda$~\cite{Bertsekas} (p. 281). Therefore, we will concentrate on problem~\ref{task}. Indeed, in~\ref{task} we penalize the value of $R(\phi)$, forcing it to be small, which could serve as a substitute of the constraint $R(\phi)\leq \epsilon$.

Recalling that $R(f) = \min_{A\in {\mathbb R}^{n\times n}, \rank A \leq k}\Vert \sqrt{M_f} -A \Vert ^2_F$, the latter problem can be rewritten as a minimization of $I(\phi)+\lambda \Vert \sqrt{M_f} -A \Vert ^2_F$ over two objects: $\phi$ and $A\in {\mathbb R}^{n\times n}, \rank A \leq k$.
Let us introduce a natural algorithm for~\ref{task}, which we call {\em the alternating scheme}, because in that algorithm we just simply optimize alternatingly over 2 arguments, $\phi$ and $A$.
\begin{algorithm}
\caption{Alternating scheme}\label{alternate}
$A_0 \longleftarrow  {\mathbf 0}$

\For{$t = 1, \cdots, T$}{
$\phi_{t}\longleftarrow  \arg\min\limits_{\phi} I(\phi)+\lambda \Vert \sqrt{M_\phi} -A_{t-1} \Vert ^2_F$

$A_{t} \longleftarrow  \arg\min_{A\in {\mathbb R}^{n\times n}, \rank A \leq k} \Vert \sqrt{M_{\phi_t}} -A \Vert _F$
}

Find $\{{\mathbf v}_i\}_{1}^n$ s.t. $A_{N}{\mathbf v}_i=\lambda_i{\mathbf v}_i$, $\lambda_1\geq \cdots\geq \lambda_n$ 

\textbf{Output:} ${\mathbf v}_1, \cdots, {\mathbf v}_k$
\end{algorithm}

Further we are developing different modifications and aspects of that algorithm.
\else
\fi
Let $\mathcal{B}(H_1, H_2)$ denote a set of bounded linear operators between Hilbert spaces $H_1$ and $H_2$. 
For $O\in \mathcal{B}(H_1, H_2)$ the rank of $O$ is defined as $\dim {\mathcal R}(O)$. Let $L^r_2({\mathbb R}^{n})$ be the Hilbert space (over ${\mathbb R}$) of real-valued functions from $L_2({\mathbb R}^{n})$ (i.e. the real-valued $L_2$-space) and $L^\ast_2({\mathbb R}^{n}) = L^r_2({\mathbb R}^{n}) \times L^r_2({\mathbb R}^{n})$. The space $L^\ast_2({\mathbb R}^{n})$ is equivalent to $L_2({\mathbb R}^{n})$ treated as a linear space over ${\mathbb R}$. Below we do not distinguish $[\phi_1, \phi_2]\in L^\ast_2({\mathbb R}^{n})$ and  $\phi_1+ {\rm i}\phi_2\in L_2({\mathbb R}^{n})$.
It is easy to see that any $O\in \mathcal{B} (L^\ast_2({\mathbb R}^{n}), {\mathbb R}^{n})$ can be given by formula:
\begin{equation}
O[\phi]_{i} ={\rm Re\,} \langle O_{i},  \phi\rangle_{L_2({\mathbb R}^{n})}, O_{i}\in L_2({\mathbb R}^{n}), i=\overline{1,n},
\end{equation}
i.e. $O\in \mathcal{B} (L^\ast_2({\mathbb R}^{n}), {\mathbb R}^{n})$ can be identified with a vector of functions
$O = \begin{bmatrix}
O_{i}
\end{bmatrix}_{i=\overline{1,n}}, O_{i}\in L_2({\mathbb R}^{n})$ and the Hilbert–Schmidt norm on $\mathcal{B} (L^\ast_2({\mathbb R}^{n}), {\mathbb R}^{n})$ (i.e. $\sqrt{{\rm Tr\,}O^\dag O}$) is
\begin{equation}\label{Bnorm}
\Vert O\Vert _\ast = \sqrt{\sum_{i=1}^n \Vert O_{i}\Vert ^2_{L_2({\mathbb R}^{n})}}.
\end{equation}
Recall that for a Mercer kernel $M$, ${\rm O}_M[\phi]({\mathbf x}) = \int_{{\mathbb R}^n} M({\mathbf x},{\mathbf y})\phi({\mathbf y})d{\mathbf y}$ is a positive operator whose domain is ${\rm Dom}({\rm O}_M) = \{f\in L_2({\mathbb R}^n)\mid {\rm O}_M[f]\in L_2({\mathbb R}^n)\}$ and range is a subset of $L_2({\mathbb R}^n)$. If we assume that  ${\rm Dom}({\rm O}_M)$ is dense in $L_2({\mathbb R}^n)$, then its adjoint ${\rm O}^\dag_M$ and the square root $\sqrt{{\rm O}_M}:{\rm Dom}({\rm O}_M)\to L_2({\mathbb R}^n)$ can be properly defined~\cite{bernau_1968}. Thus, ${\rm O}_M$ is self-adjoint. For any complex-valued function $f$ such that ${\rm Tr\,}M_f<\infty$ let us introduce a linear operator $S_f: L^\ast_2({\mathbb R}^{n}) \rightarrow  {\mathbb R}^{n}$ by the following rule:
\begin{equation}
S_f[\phi]_{i} ={\rm Re\,} \langle \sqrt{{\rm O}_M}[x_i f({\mathbf x})], \phi \rangle_{L_2({\mathbb R}^{n})},
\end{equation}
i.e. $(S_f)_{i} = \sqrt{{\rm O}_M}[x_if({\mathbf x})], i=\overline{1,n}$. In the latter definition the expression $\langle \sqrt{{\rm O}_M}[x_i f({\mathbf x})], \phi \rangle_{L_2({\mathbb R}^{n})}$ is finite due to $\langle \sqrt{{\rm O}_M}[x_if({\mathbf x})],\sqrt{{\rm O}_M}[x_if({\mathbf x})]\rangle_{L_2({\mathbb R}^n)} = (M_f)_{ii}<\infty$ and the Cauchy-Schwarz inequality.
 
\begin{theorem}\label{square-root-operator}
Let $M$ be a Mercer kernel such that  ${\rm Dom}({\rm O}_M)$ is dense in $L_2({\mathbb R}^n)$ and $\,\Tr M_f<\infty$. Then, $S_f\in \mathcal{B} (L^\ast_2({\mathbb R}^{n}), {\mathbb R}^{n})$ and $S_f S^\dag_f = M_f$. Moreover, 
\begin{equation}
R(f) = \min\limits_{S\in \mathcal{B} (L^\ast_2({\mathbb R}^{n}), {\mathbb R}^{n}), \rank S \leq k} \Vert S_f - S\Vert _\ast^2
\end{equation}
 and the minimum is attained at $S = P_f S_f$ where $P_f=\sum_{i=1}^k {\mathbf u}_i {\mathbf u}_i^\dag$ and $\{{\mathbf u}_i\}^k_{1}$ are unit eigenvectors of $M_{f}$ corresponding to the $k$ largest eigenvalues (counting multiplicities). 
\end{theorem}
\begin{proof} 
The boundedness of $S_f$ follows from the Cauchy-Schwarz inequality:
\begin{equation}
\begin{split}
\mid  S_f[\phi]_i\mid  ^2 = \mid  {\rm Re\,}\langle \sqrt{{\rm O}_M}[x_i f], \phi\rangle\mid  ^2 \leq 
\langle \sqrt{{\rm O}_M}[x_i f], \sqrt{{\rm O}_M}[x_i f]\rangle \langle \phi, \phi\rangle  = 
\langle x_i f, {\rm O}_M[x_i f]\rangle \langle \phi, \phi\rangle 
\end{split}
\end{equation}
and therefore:
\begin{equation}
\Vert S_f[\phi]\Vert ^2 = \sum_{i=1}^n \vert S_f[\phi]_i\vert ^2  \leq \Tr M_f 
 \Vert \phi\Vert ^2_{L_2({\mathbb R}^{n})}.
\end{equation}
Thus, we have checked that $S_f$ is bounded. 

By definition, $S^\dag_f:  {\mathbb R}^{n}\rightarrow L^r_2({\mathbb R}^{n})\times L^r_2({\mathbb R}^{n})$ and $\langle {\mathbf u}, S_f[\phi_1, \phi_2]\rangle = \langle S^\dag_f[{\mathbf u}], [\phi_1, \phi_2]\rangle$, ${\mathbf u}\in {\mathbb R}^{n}$, $[\phi_1, \phi_2]\in L^r_2({\mathbb R}^{n})\times L^r_2({\mathbb R}^{n})$. Let us denote $f_1 = {\rm Re\,}f, f_2 = {\rm Im\,}f$. It is easy to see that the following operator satisfies the latter identity:
\begin{equation}
O[{\mathbf u}]  = \begin{bmatrix}
\sqrt{{\rm O}_M}[f_1({\mathbf x}) {\mathbf x}^T{\mathbf u} ], \sqrt{{\rm O}_M}[f_2({\mathbf x}) {\mathbf x}^T{\mathbf u} ]
\end{bmatrix}.
\end{equation}
Since the adjoint is unique, then $S^\dag_f = O$. Let us calculate  $S_f S^\dag_f$:
\begin{equation}
\begin{split}
{\mathbf u} \xrightarrow{S^\dag_f} \begin{bmatrix}
\sqrt{{\rm O}_M}[f_1({\mathbf x}) {\mathbf x}^T{\mathbf u} ], \sqrt{{\rm O}_M}[f_2({\mathbf x}) {\mathbf x}^T{\mathbf u} ]
\end{bmatrix} \xrightarrow{S_f}  \\
\begin{bmatrix}
\langle x_1 f_1({\mathbf x}),  \sqrt{{\rm O}_M}[\sqrt{{\rm O}_M}[f_1({\mathbf x}) {\mathbf x}^T{\mathbf u} ]] \rangle  \\
\cdots \\
\langle x_n f_1({\mathbf x}),  \sqrt{{\rm O}_M}[\sqrt{{\rm O}_M}[f_1({\mathbf x}) {\mathbf x}^T{\mathbf u} ]] \rangle 
\end{bmatrix} + 
\begin{bmatrix}
 \langle x_1 f_2({\mathbf x}),  \sqrt{{\rm O}_M}[\sqrt{{\rm O}_M}[f_2({\mathbf x}) {\mathbf x}^T{\mathbf u} ]] \rangle \\
\cdots \\
 \langle x_n f_2({\mathbf x}),  \sqrt{{\rm O}_M}[\sqrt{{\rm O}_M}[f_2({\mathbf x}) {\mathbf x}^T{\mathbf u} ]] \rangle
\end{bmatrix} 
= \\
\begin{bmatrix}
\sum_{j=1}^2\langle x_1 f_j({\mathbf x}),  {\rm O}_M[f_j({\mathbf x}) {\mathbf x}^T{\mathbf u} ] \rangle \\
\cdots \\
\sum_{j=1}^2\langle x_n f_j({\mathbf x}),  {\rm O}_M[f_j({\mathbf x}) {\mathbf x}^T{\mathbf u} ] \rangle
\end{bmatrix} = 
\begin{bmatrix}
{\rm Re\,}\langle x_i f, M[x_j f]\rangle
\end{bmatrix}_{1\leq i, j \leq n} {\mathbf u} = M_f {\mathbf u}
\end{split}
\end{equation}
Thus, $S_f S^\dag_f = M_f$. Since $\Tr S_f S^\dag_f < \infty$ and $\Vert  S^\dag_f[{\mathbf u}]\Vert  ^2 \leq \langle {\mathbf u}, M_f {\mathbf u}\rangle$, we obtain $S^\dag_f$ is a bounded operator.

Let ${\mathbf u}_1, \cdots {\mathbf u}_n$ be orthonormal eigenvectors of $\mathcal{M}_{f} = S_f S^\dag_f$ and $\lambda_1 \geq \cdots \geq \lambda_{n'}>0$ be corresponding nonzero eigenvalues. For $\sigma_i = \sqrt{\lambda_i}$ let us define
$
{\mathbf v}_i = \frac{S_{f}^\dag[{\mathbf u}_i]}{\sigma_i}
$.
Vector ${\mathbf v}_i$ corresponds to a pair of functions
\begin{equation}
\begin{split}
{\mathbf v}_i = \frac{1}{\sigma_i}\begin{bmatrix}
\sqrt{{\rm O}_M}[f_1({\mathbf x}) {\mathbf x}^T{\mathbf u}_i ] \\ \sqrt{{\rm O}_M}[f_2({\mathbf x}) {\mathbf x}^T{\mathbf u}_i ]
\end{bmatrix} \in L^r_2({\mathbb R}^{n})\times L^r_2({\mathbb R}^{n})
\end{split}
\end{equation}
It is easy to see that ${\mathbf v}_1, \cdots {\mathbf v}_{n'}$ is an orthonormal basis in $\Ima  S_f^\dag$, and 
$S_{f}^\dag$ can be expanded in the following way:
\begin{equation}
S_{f}^\dag = \sum_{i=1}^{n'} \sigma_i {\mathbf v}_i {\mathbf u}^\dag_i,
\end{equation}
and therefore, 
SVD for $S_{f}$ is
\begin{equation}
S_{f} = \sum_{i=1}^{n'} \sigma_i  {\mathbf u}_i {\mathbf v}_i^\dag.
\end{equation}
By the Eckart-Young-Mirsky theorem (see Theorem 4.4.7 from~\cite{hsing2015theoretical}), an optimal $S$ in $\min\limits_{S\in \mathcal{B} (L^\ast_2({\mathbb R}^{n}), {\mathbb R}^{n}), \rank S \leq k} \Vert S_f - S\Vert _\ast^2$ is defined by a truncation of SVD for $S_{f}$ at $k$th term, i.e.
\begin{equation}\label{optS}
S = \sum_{i=1}^{k} \sigma_i  {\mathbf u}_i {\mathbf v}_i^\dag =  P_{f} S_f,
\end{equation}
where $P_{f} = \sum_{i=1}^k {\mathbf u}_i {\mathbf u}_i^\dag$ is a projection operator to first $k$ principal components of $\mathcal{M}_{f}$. Moreover, $\Vert S_f - P_f S_f\Vert ^2 = \sum_{i=k+1}^{n'} \sigma^2_i = \Vert M_f\Vert _{n-k} = R(f)$.
\end{proof}

Given the new representation $R(f) = \min\limits_{S\in \mathcal{B} (L^\ast_2({\mathbb R}^{n}), {\mathbb R}^{n}), \rank S \leq k} \Vert S_f - S\Vert _\ast^2$ we have
\begin{equation}
\begin{split}
\min_{f\in \mathfrak{F}}  I(f)+\lambda R(f) = \min_{\scriptsize\begin{matrix} f\in \mathfrak{F}, \\ S\in \mathcal{B} (L^\ast_2({\mathbb R}^{n}), {\mathbb R}^{n}), \rank S \leq k\end{matrix}} I(f)+\lambda \Vert S_f - S\Vert _\ast^2
\end{split}
\end{equation}
Thus, it is natural to view the Task~\eqref{sequence} as a minimization of $I(\phi)+\lambda \Vert S_\phi - S\Vert _\ast^2$ over two objects: $f \in \mathfrak{F}$ and $S\in \mathcal{B} (L^\ast_2({\mathbb R}^{n}), {\mathbb R}^{n}): \rank S \leq k$. 
The simplest approach to minimize a function over two arguments is to optimize alternatingly, i.e. first over $f$, and then over $S: \rank S \leq k$, and so on. Theorem~\ref{square-root-operator} gives that the minimization over $S$ is equivalent to the truncation of ${\rm SVD}(S_{f})$ at the $k$-th term.
This idea, that we dub the {\em alternating scheme} (AS), is described in Algorithm~\ref{alternate-mod}. 

\begin{algorithm}
\begin{algorithmic}
\caption{The alternating scheme (AS) for~\eqref{sequence}}\label{alternate-mod}
\State $(P_0,S_{\phi_{0}}) \longleftarrow $ Initialize

\For{$t = 1, \cdots, T$}

\State $\phi_{t}\longleftarrow  \arg\min\limits_{\phi\in \mathfrak{F}} I(\phi)+\lambda \Vert S_\phi - P_{t-1}S_{\phi_{t-1}}\Vert ^2_\ast$ (minimizing over $\phi$)

\State Calculate $M_{\phi_t}$ and find $\{{\mathbf v}_i\}^n_1$ s.t. $M_{\phi_t}{\mathbf v}_i=\lambda_i{\mathbf v}_i$, $\lambda_1\geq \cdots\geq \lambda_n$

\State $P_{t} \longleftarrow  \sum_{i=1}^k {\mathbf v}_i {\mathbf v}_i^T$ (Truncated ${\rm SVD}(S_{\phi_t})$ is $P_{t}S_{\phi_t}$)

\EndFor

\State \textbf{Output:} ${\mathbf v}_1, \cdots, {\mathbf v}_k$
\end{algorithmic}
\end{algorithm}

The alternating algorithm~\ref{alternate-mod} allows for a reformulation in the dual space. By this we mean that in Algorithm~\ref{alternate-mod} we substitute $\widehat{\phi}_{t}$ for the original $\phi_{t}$. If the primal Algorithm~\ref{alternate-mod} deals with  operators $S_{\phi}, S_{\phi_{t-1}}$, the dual version deals with vectors of functions $\sqrt{{\widehat G_{\sigma}}}\frac{\partial {\widehat\phi}}{\partial {\mathbf x}}, \sqrt{{\widehat G_{\sigma}}}\frac{\partial {\widehat\phi}_{t-1}}{\partial {\mathbf x}}$. Details of the dual algorithm can be found in~\ref{mod-alt-right2}.

The objective $I(f)+\lambda R(f)$ can have many local minima due to the effect of the penalty term $R(f)$. Therefore, the Alternating Scheme~\ref{alternate-mod} is strongly dependant on the initialization step. One of such initialization procedures for the task~\eqref{MMD-task} is described in the next section.

\section{An approximate algorithm for the MMD-PCA}\label{Approximate-MMD-PCA}
Let us analyze the task~\eqref{MMD-task} in the case where $K({\mathbf x}, {\mathbf y})=({\mathbf x}\cdot {\mathbf y}) H({\mathbf x}, {\mathbf y})$ and $H$ is a Mercer kernel (by construction, $K$ is also a Mercer kernel). In this section we demonstrate that, given a distribution $f({\mathbf x})=\frac{1}{N}\sum_{i=1}^N \delta^n({\mathbf x}-{\mathbf x}_i)$, a good guess for a $k$-dimensional space in which an optimal solution is supported is a span of the first $k$ principal components of $H_f$ (see the Algorithm~\ref{approximate}). 

\begin{algorithm}
\begin{algorithmic}
\caption{An approximate algorithm for~\eqref{MMD-task} where $K({\mathbf x}, {\mathbf y})=({\mathbf x}\cdot {\mathbf y}) H({\mathbf x}, {\mathbf y})$}\label{approximate}
\State \textbf{Input:} ${\mathbf x}_1, \cdots, {\mathbf x}_N$, $f({\mathbf x})=\frac{1}{N}\sum_{i=1}^N \delta^n({\mathbf x}-{\mathbf x}_i)$

\State Calculate $H_{f} = \frac{1}{N^2}\sum_{i=1}^N\sum_{j=1}^N {\mathbf x}_i {\mathbf x}_j^T H({\mathbf x}_i, {\mathbf x}_j)$.

\State Calculate ${\rm SVD}(H_{f})$: $H_{f} = \sum_{i=1}^n \lambda_i {\mathbf v}_i {\mathbf v}_i^T$ where $\lambda_1\geq \cdots \geq \lambda_n$

\State $P\longleftarrow \sum_{i=1}^k {\mathbf v}_i {\mathbf v}_i^T$
\State \textbf{Output:} ${\mathbf x}'_i = P{\mathbf x}_i, i\in [N]$, $f'({\mathbf x}) = \frac{1}{N}\sum_{i=1}^N \delta^n({\mathbf x}-{\mathbf x}'_i)$
\end{algorithmic}
\end{algorithm}

A specifics of this type of kernels is that the MMD distance (induced by $K$) till an optimal solution of the MMD-PCA task is bounded below by the Ky Fan $k$-antinorm of $H_f$, as shown in the following theorem.

\begin{theorem}\label{MMD-repr} Let $K({\mathbf x}, {\mathbf y}) = ({\mathbf x}\cdot {\mathbf y}) H({\mathbf x}, {\mathbf y})$ where $H: {\mathbb R}^n\times {\mathbb R}^n\to {\mathbb R}$ is a Mercer kernel, ${\rm Dom}({\rm O}_K)$ is dense in $L_2({\mathbb R}^n)$ and $f$ is such that ${\rm Tr\,}H_f < \infty$. Then,
\begin{equation}
\inf_{f'\in \mathcal{G}_k}\|f-f'\|^2_{K}\geq \sum_{i=k+1}^n \lambda_i
\end{equation}
where $\|g\|_{K}^2 = \langle g| K|g\rangle$, $\lambda_1\geq \cdots \geq \lambda_n$ are eigenvalues (counting multiplicities) of $H_f$.
\end{theorem}
\begin{proof}[Sketch.]
Let us apply Theorem~\ref{square-root-operator} to the kernel $H$ and the function $f$.
Recall that an element $O\in \mathcal{B}(L^\ast_2({\mathbb R}^n), {\mathbb R}^n)$  can be identified with a vector-function $[O_i]_{i=1}^n, O_i\in L_2({\mathbb R}^n)$ where $O[\phi]_{i} ={\rm Re\,} \langle O_{i},  \phi\rangle_{L_2({\mathbb R}^{n})}$. Since $S_f$ corresponds to $[\sqrt{{\rm O}_{H}}[x_i f({\mathbf x})]]_{i=1}^n$, the representation of Theorem~\ref{square-root-operator} gives us
\begin{equation}
\begin{split}
\|H_f\|_{n-k} = \min_{S\in \mathcal{B} (L^\ast_2({\mathbb R}^{n}), {\mathbb R}^{n}), \rank S \leq k}\sum_{i=1}^n\|\sqrt{{\rm O}_{H}}[x_i f({\mathbf x})]-S_i({\mathbf x})\|^2_{L_2({\mathbb R}^n)}
\end{split}
\end{equation}
The restriction $\rank S \leq k$ is equivalent to ${\rm dim\,}\mathcal{L}_S\leq k$ where $\mathcal{L}_S = \{\sum_{i=1}^n\xi_i S_i({\mathbf x})\mid \xi_i\in {\mathbb R}\}$ and $S$ corresponds to $[S_i]_{i=1}^n$. 

Let $f'\in \mathcal{G}_k$. By Theorem~\ref{low-rank-char} we have ${\rm dim\,}{\rm span}_{\mathbb R}(\{x_1f', \cdots, x_n f'\})\leq k$. By Theorem~\ref{rank-k}, $\langle x_if'|H|x_if'\rangle$ is finite, therefore $\sqrt{{\rm O}_{H}}[x_i f'({\mathbf x})]$ can be properly defined and is in $L_2({\mathbb R}^n)$. Therefore, 
\begin{equation}
{\rm dim\,}{\rm span}_{\mathbb R}(\{\sqrt{{\rm O}_{H}}[x_1 f'({\mathbf x})], \cdots, \sqrt{{\rm O}_{H}}[x_n f'({\mathbf x})]\}\leq k.
\end{equation}
For any $f'\in \mathcal{G}_k$ one can set $S_i =  \sqrt{{\rm O}_{H}}[x_i f'({\mathbf x})]$ and search over all possible $f'\in \mathcal{G}_k$ in the minimization operator. Thus,
\begin{equation}
\begin{split}
R(f) \leq \inf_{f'\in \mathcal{G}_k}\sum_{i=1}^n\|\sqrt{{\rm O}_{H}}[x_i f({\mathbf x})]-\sqrt{{\rm O}_{H}}[x_i f'({\mathbf x})]\|^2_{L_2({\mathbb R}^n)}= \inf_{f'\in \mathcal{G}_k}\|f-f'\|^2_{K}.
\end{split}
\end{equation}
\end{proof}

Let $\mu_{\rm data}$ be a uniform distribution over $\{{\mathbf x}_i\}_{i=1}^N$ and $d_{\rm MMD}$ be the MMD distance induced by $K({\mathbf x}, {\mathbf y})=({\mathbf x}\cdot {\mathbf y}) H({\mathbf x}, {\mathbf y})$. The last theorem can be applied to a smoothed empirical distribution $f_\varepsilon({\mathbf x}) =\frac{1}{N}\sum_{i=1}^N G^n_\varepsilon ({\mathbf x}-{\mathbf x}_i)$ and then, we can send $\varepsilon\to 0$. All the more, the inequality will be satisfied if we search over $\mu\in \mathcal{P}_k$ due to $T_\mu\in \mathcal{G}_k$. Thus,
\begin{equation}
\begin{split}
\inf_{\mu\in \mathcal{P}_k}d_{\rm MMD}(\mu_{\rm data}, \mu)^2\geq \lim_{\varepsilon\to 0}\inf_{f'\in \mathcal{G}_k}\|f_\varepsilon -f'\|^2_{K}\geq \sum_{i=k+1}^n \lambda_i
\end{split}
\end{equation}
where $\lambda_1\geq \cdots \geq \lambda_n$ are eigenvalues (counting multiplicities) of $H_f$, $f({\mathbf x})=\frac{1}{N}\sum_{i=1}^N \delta^n({\mathbf x}-{\mathbf x}_i)$. Thus, $(\sum_{i=k+1}^n \lambda_i)^{1/2}$ is a lower bound of the solution of~\eqref{MMD-task}.

For such an important practical case as the HM-MMD-PCA, a multiple of the square root of the Ky Fan $k$-antinorm of $H_f$ is also an upper bound. 
\begin{theorem}\label{polynomial}
Let $H({\mathbf x}, {\mathbf y}) = P({\mathbf x}\cdot {\mathbf y})$ where $P(x) = c_0+c_1 x + \cdots + c_{l-1} x^{l-1}$, $c_i\geq 0, i\in [l-1]$, $f({\mathbf x}) = \frac{1}{N}\sum_{i=1}^N \delta^n({\mathbf x}-{\mathbf x}_i)=T_{\mu_{\rm data}}$ and $\lambda_1\geq \lambda_2\geq \cdots\geq \lambda_n$ are eigenvalues of $H_f$. Then,
\begin{equation}
\inf_{\mu\in\mathcal{P}_k}d_{\rm MMD}(\mu_{\rm data}, \mu) \leq \sqrt{l} (\sum_{i=k+1}^n \lambda_i)^{1/2}.
\end{equation}
\end{theorem}
The following corollary is straightforward from the last theorem.
\begin{corollary}\label{2-approx} A $2$-approximating solution of the task~\eqref{HM-task} can be efficiently found by the Algorithm~\ref{approximate}.
\end{corollary}
For the case when $H$ is the Gaussian kernel the situation is slightly trickier.
\begin{theorem}\label{Gaussian-MMD}
Let $H({\mathbf x}, {\mathbf y}) = e^{-\frac{\sigma^2\|{\mathbf x}-{\mathbf y}\|^2}{2}}$, $m_i = \int_{{\mathbb R}^n}  \|{\mathbf x}\|^i |f({\mathbf x})|d{\mathbf x}<\infty, i\in [4]$ and $\lambda_1\geq \lambda_2\geq \cdots\geq \lambda_n$ are eigenvalues of $H_f$. Then,
\begin{equation}
\begin{split}
\inf_{f'\in\mathcal{G}_k}\|f-f'\|_K^2 \leq 
M\sum_{j=k+1}^n \lambda_j^{1/2},
\end{split}
\end{equation}
where $M = {\mathcal O}(m_2+\sqrt{2}\sigma \sqrt{m_4 m_2}+\sqrt{2}\sigma m_3)$.
\end{theorem}
An analogous theorem can be proved for $H({\mathbf x}, {\mathbf y}) = (1+\sigma^2 \|{\mathbf x}- {\mathbf y}\|^2)^{-\frac{n+1}{2}}$, i.e. the Poisson kernel.

\section{Experiments}\label{SDR-exp}
The alternating scheme~\ref{alternate-mod} is a general optimization method that needs to be specified for every optimization task. We designed numerical specifications of the alternating scheme~\ref{alternate-mod} for all 4 optimization tasks:~\eqref{MMD-task},~\eqref{HM-task},~\eqref{Wasser-task} and~\eqref{sdr-problem} and made experiments with all of them. Details of the algorithms, i.e. numerical methods to minimize over $\phi$ and calculate $M_{\phi_t}$, can be found in Appendix. Note that for WD-PCA~\eqref{Wasser-task} we exploit the alternating scheme in the initial form (i.e.~\ref{alternate-mod}), and for MMD-PCA~\eqref{MMD-task}, HM-MMD-PCA~\eqref{HM-task} and SDR-ORF~\eqref{sdr-problem} we use the dual version of the scheme. 

{\bf Behaviour of the Gaussian MMD-PCA for small $h$.}
We studied the difference in the behavior of PCA and a solution of~\eqref{MMD-task}, for the distance function induced by the kernel $K({\mathbf x}, {\mathbf y}) = \frac{1}{\sqrt{(8\pi h^2)^n}} e^{-\frac{\| {\mathbf x}\| ^2}{8h^2}}$, obtained by the alternating scheme~\ref{alternate-mod} (AS for MMD-PCA), for the case when $h$ is small compared to the standard deviation of features.
Experiments show that they are sharply different when data points are sampled along a low-dimensional manifold ${\mathfrak M}$, which is bent globally, goes through the origin $O$ and has a large curvature at $O$ (see Fig.~\ref{tangent}). Since generated points do not lie on an affine subspace, the global nature of PCA makes it hard to interprete principal directions.

We select a smooth function $f: {\mathbb R}^{n-1}\rightarrow {\mathbb R}$, such that $f({\mathbf 0}) = 0$ and generate points in the following way: points ${\mathbf x}_{1}, {\mathbf x}_{2},\cdots ,{\mathbf x}_{N} \sim [-10,10]^{n-1}$ are sampled uniformly, after calculation of $y_{i} = f({\mathbf x}_{i})$ we add some noise: ${\mathbf z}_i = ({\mathbf x}_i, y_i) + \boldsymbol{\epsilon}_i, \boldsymbol{\epsilon}_i\sim {\mathcal N}(0,0.01 I_n)$. Both PCA and MMD-PCA are applied to the dataset (first 3 pictures on Figure~\ref{tangent}). As we see, MMD-PCA, unlike PCA, tries to catch ideal alignments of points rather that searching for a global alignment of points (which is non-existent). This property of MMD-PCA makes it a promising tool for a calculation of the tangent space to a data manifold at a given point. Fourth picture shows that when we have 2 equally important directions in data such that the first principal direction of PCA is between them (red line), and we set $k=1$, then MMD-PCA (green line) always chooses one of those directions. These experimental results are aligned with the theoretical observation given in Example~\ref{observation}, in which we show that the Gaussian MMD-PCA task for $h\to 0+$ is equivalent to finding a $k$-dimensional subspace that contains as many points of a dataset as possible. Thus, the Gaussian MMD-PCA can be considered as a method that can be potentially used to tackle the latter NP-hard problem. Some informal discussion of this problem can be found in~\cite{max-subset}.
\begin{figure*}
\begin{subfigure}{1.0\textwidth}
\centering
\begin{tabular}{cccc}
\includegraphics[scale = 0.2]{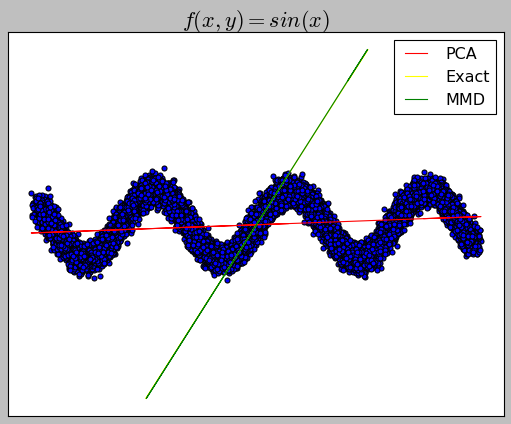} & \includegraphics[scale = 0.2]{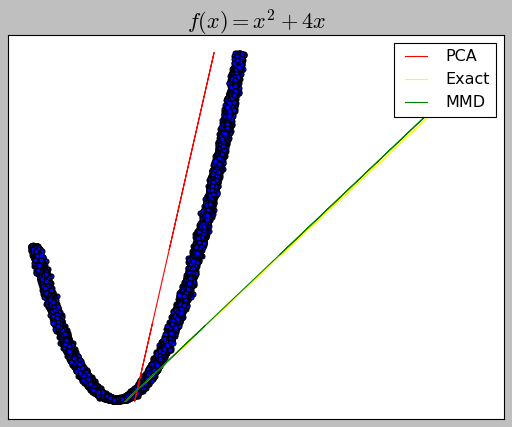} & \includegraphics[scale = 0.2]{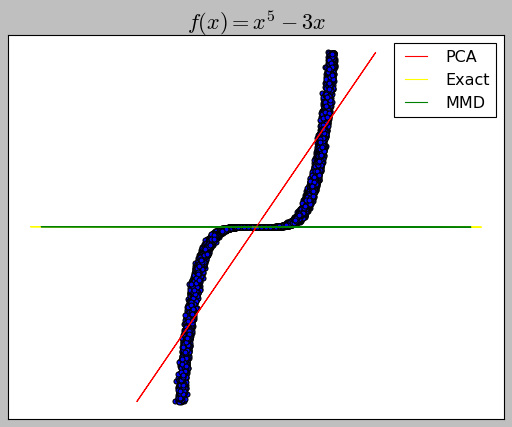} & \includegraphics[scale = 0.2]{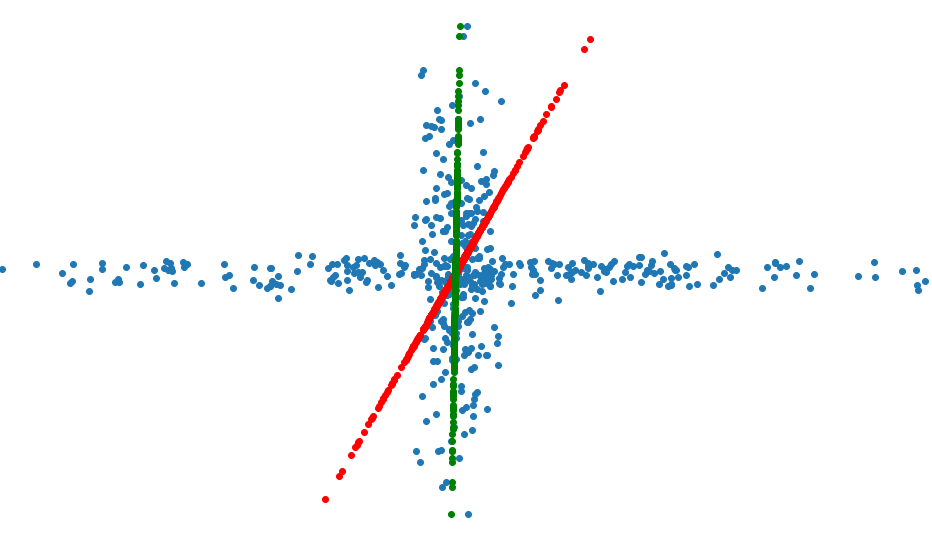}
\end{tabular}
\caption{Visualization of outputs of the PCA and MMD-PCA methods. MMD-PCA (green line) tends to select a subcollection of points that sharply aligns along the local direction (i.e. the tangent line), whereas the first principal component (red line) reflects the global shape of data.}\label{tangent}
\end{subfigure}
\begin{subfigure}{1.0\textwidth}
\centering
\begin{tabular}{cc}
\includegraphics[height=0.3\linewidth]{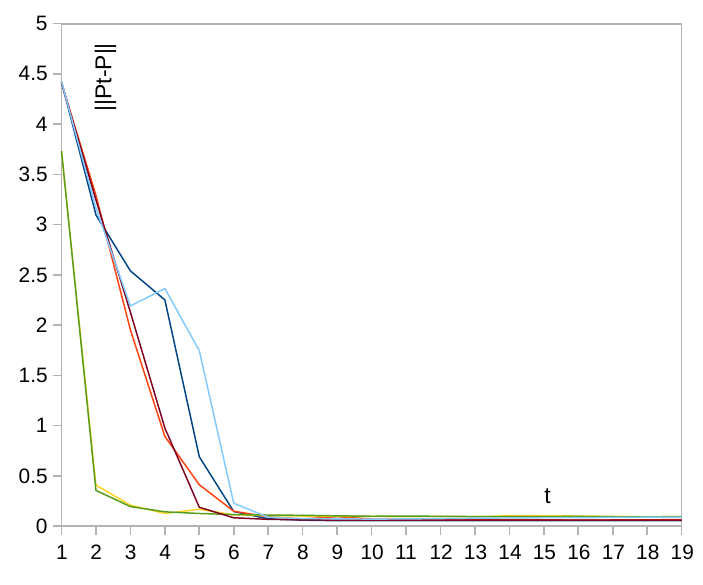} & \includegraphics[height=0.3\linewidth]{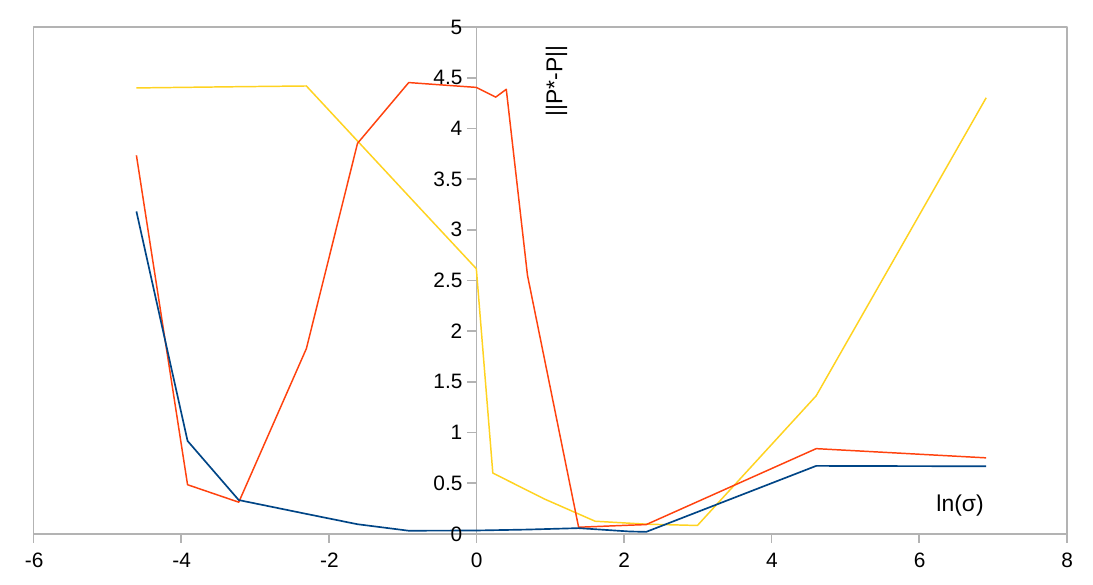} 
\end{tabular}
\caption{The dependence of $\Vert P_t-P\Vert _F$ on $t$ for different values of parameters $\updelta$ and $\lambda$. Left plot: $\Vert P_t-P\Vert _F$:  \sqbox{yellow} $\updelta = 0.05$, $\lambda = 20.0$, case I, \sqbox{green} $\updelta = 0.05, \lambda = 20.0$, case II, \sqbox{blue} $\updelta = 0.05, \lambda = 100.0$, case I, \sqbox{red} $\updelta = 0.05, \lambda = 100.0$, case II, \sqbox{babyblueeyes} $\updelta = 0.1, \lambda = 100.0$, case I, \sqbox{brown} $\updelta = 0.1, \lambda = 100.0$, case II. Right plot: $\Vert P^\ast-P\Vert _F$ as a function of $\ln \sigma$:\sqbox{red} MMD-PCA,  \sqbox{blue} HM-MMD-PCA, \sqbox{yellow} WD-PCA.}\label{dependance}
\end{subfigure}
\end{figure*}

{\bf Experiments with outlier detection (MMD-PCA, HM-MMD-PCA, WD-PCA).}
Following the experiment setup of~\cite{NIPS2010_4005}, we choose parameters $N=n=400, \updelta=0.05$ (or $0.1$), $k=10$ and generate random matrices $A\in {\mathbb R}^{N(1-\updelta)\times k}, B\in {\mathbb R}^{n\times k}$ whose entries are iid as ${\mathcal N}(0,1)$. Then, columns of the matrix $BA^T\in {\mathbb R}^{n\times N(1-\updelta)}$ (whose rank is $\leq k$) are concatenated with columns of the matrix $C\in {\mathbb R}^{n\times N\updelta}$: $X = {\rm concat} (BA^T, C)\in {\mathbb R}^{n\times N}$. The entries in $C$ are either iid as ${\mathcal N}(0,1)$ (case I) or  $N\updelta$ copies of the same vector whose entries are iid as ${\mathcal N}(0,1)$ (case II). Let $X = [{\mathbf x}_1, \cdots, {\mathbf x}_N]$, i.e. columns of $X$ are the data points.
Thus, $N(1-\updelta)$ columns of $BA^T$ lie in a $k$-dimensional subspace of ${\mathbb R}^{n}$ and $N\updelta$ columns of $C$ are outliers, and solutions of tasks~\eqref{MMD-task},~\eqref{HM-task} or~\eqref{Wasser-task} for this dataset are expected to be supported in a column space of $BA^T$.

After every iteration (step $t$ of the alternating scheme~\ref{alternate-mod}) we calculate the Frobenius distance between the projection operator $P_t$ of~\ref{alternate-mod} and the projection operator $P$ to the column space of $BA^T$, i.e. $\Vert P_t-P\Vert _F$. For the task~\eqref{Wasser-task}, the dependence of $\Vert P_t-P\Vert _F$ on $t$ for different values of parameters $\updelta$ and $\lambda$ is shown in Figure~\ref{dependance}. For tasks~\eqref{MMD-task},~\eqref{HM-task} the behaviour of the alternating scheme is similar, 7 iterations are enough to approach the optimal subspace. 

One of main goals of this experimental setup was to study how the kernel $M$, that defines the regularizer $R(f)$ by equation\eqref{R-to-M}, affects the quality of a solution. 
Besides the speed of convergence we were interested in how $\Vert P^\ast-P\Vert _F$, where $P^\ast = \lim_{t\rightarrow \infty}P_t$ is the final projection operator (e.g. $P_{20}$ in practice), depends on the parameter ${\sigma}$ of the kernel $M=G^n_{\sigma}$ (bandwidth). It is natural to expect the quality of the solution $P^\ast$ to degrade as ${\sigma}\rightarrow +\infty$ (this corresponds to $M({\mathbf x}, {\mathbf y})\rightarrow 0$), and, less trivially, as ${\sigma}\rightarrow 0$ (this corresponds to $M({\mathbf x}, {\mathbf y})\rightarrow \delta^n({\mathbf x} - {\mathbf y})$). As the right plot on Figure~\ref{dependance} shows, for the HM-MMD-PCA, the solution $P^\ast$ is close to the correct $P$ if the bandwidth $\sigma$ is in interval $[e^{-2},e^{3}]$ and it  degrades beyond that interval. For the Gaussian MMD-PCA the degrading occurs beyond $[e^{1.3},e^3]$. For the WD-PCA the interval for $\sigma$ is sligtly narrower than $[e^{1.3},e^3]$. Our numerical specification of the alternating scheme for WD-PCA involves training regularized Generative Adversarial Network (see for details~\ref{numerical-wd}) and are based on numerically unstable algorithms for the Wasserstein distance minimization.  
Finding numerical specifications for WD-PCA with a more stable behavior is a future work.

{\bf Experiments with SDR-ORF.}
We made experiments on the standard datasets, \href{https://archive.ics.uci.edu/ml/datasets/heart+disease}{Heart}, \href{https://archive.ics.uci.edu/ml/datasets/Breast+Cancer+Wisconsin+(Diagnostic)}{Breast Cancer}, \href{https://archive.ics.uci.edu/ml/datasets/Ionosphere}{Ionosphere}, \href{https://www4.stat.ncsu.edu/~boos/var.select/diabetes.html}{Diabetes}, \href{https://archive.ics.uci.edu/ml/machine-learning-databases/housing/}{Boston House Prices} and \href{https://archive.ics.uci.edu/ml/machine-learning-databases/wine/}{Wine Quality}. First, we applied the Sliced Inverse Regression algorithm (SIR)~\cite{Li91} to the training set and calculated the effective subspace for $k=2, 3$. All points were projected onto that space and we obtained two- or three-dimensional representations of input points. In the last step we applied the ten nearest neighbors algorithm (KNN) to predict outputs (based on reduced inputs) on the test set (for the regression case, the 10-KNN regression was used). The same scheme was repeated with PCA, Kernel Dimensionality Reduction (KDR) algorithm~\cite{Fukumizu} and the alternating scheme~\ref{alternate-mod} (AS) adapted for the SDR-ORF. 

We experimented with the dual version of algorithm~\ref{alternate-mod}, setting (after the data was standardized) the kernel's parameter ${\sigma} = 0.8$\footnote{Since the role of the parameter ${\sigma}$ is similar to that of the bandwidth in the kernel density estimation, we use Silverman's rule of thumb to set ${\sigma} = N^{-1/(n+4)}$.} and $\lambda = 10.0$. Details of its numerical implementation can be found in~\ref{numerical-sdr}.
In the table~\ref{SIR-KDR} one can see the obtained test set accuracy on the classification tasks and R$^2$ on the regression tasks. As we see from the table~\ref{SIR-KDR}, after reducing the dimension of an input to $k=2, 3$, we are still able to obtain good accuracy of prediction on a test set and the AS for the SDR-ORF is competitive in comparison with other methods. Note that all listed datasets are of moderate size and our Python scripts managed to compute an effective subspace in 3-5 minutes on a PC with GTX Titan X (Pascal), Intel Core i7-7700K (4.20 GHz), 64 GB RAM.
\begin{table*}
\centering
\begin{tabular}{ |c|cc|cc|cc|cc| }
 \hline 
 \backslashbox{Dataset}{Method} & PCA & & SIR & & KDR &  &  AS~\ref{alternate-mod}   &   \\ [0.5ex]
 \hline
Dimension $k$ & 2 & 3 & 2 & 3  & 2  & 3 &  2 &  3   \\ [0.5ex]
 \hline
 Heart (acc)   &  79.80 &  79.46 &   82.49   & 81.82 &   {\bf 86.33}  & {\bf  88.77}  & 81.48 & 83.50    \\
 Breast (acc) &  93.46 &  93.65 &  97.30   & 96.73 &  93.13 & 95.95  & {\bf 97.88} & {\bf  97.69}    \\
Ionosphere (acc) &  80.29 &  86.57 & {\bf 89.14} & 89.43 &  83.43 &  86.29  & 88.29 &  {\bf  90.57}   \\
Diabetes (R$^2$) &  25.34 &  28.72 & {\bf 43.47}  & 43.61 &  41.82 &  44.30 & 43.07 & {\bf 44.48}   \\
Boston (R$^2$) &  56.42 &  67.12 & 76.03 & 74.29 &  {\bf 77.88} &  {\bf  79.97} & 73.21 & 77.88   \\
Wine (R$^2$) &  93.91 &  94.12 &  {\bf 98.68} & {\bf  99.24} &  98.30 &  96.02 & 97.10 & 96.93  \\
 \hline 
\end{tabular} 
\caption{The  cross-validated accuracies/R$^2$ of KNN on 2 or 3-dimensional input representations.}
\label{SIR-KDR}
\end{table*}


{\bf Experiments with the shadow/black removal.} We made experiments with Yale B dataset~\cite{Georghiades}, which is a popular benchmark for testing robust versions of PCA. 
That dataset  contains images of 28 human subjects under 9 poses and 64 illumination conditions. 
Test images used in the experiments are  cropped and re-sized to 168x192 images, making the dimensionality of every image 32256. Thus, each human subject  corresponds to a collection of 32256-dimensional vectors that lie on some low-dimensional subspace $\mathcal{L}$ of ${\mathbb R}^{32256}$. We search for this subspace, assuming that its dimension is either 1 or 5, using  PCA and the Algorithm~\ref{approximate} with the kernel $K({\mathbf x},{\mathbf y}) = ({\mathbf x}\cdot {\mathbf y}) e^{-\frac{\|{\mathbf x}-{\mathbf y}\|^2}{n}}$ (which we simply call Gauss). Our experiments
showed that behaviour of PCA and Gauss are quite similar if the dimension of $\mathcal{L}$ is 5, though Gauss removes more shadows and preserves more details of an original image if the dimension of $\mathcal{L}$ is 1 (see Figures~\ref{opk} and~\ref{okko}). A processing of each human subject by Gauss takes seconds on Google Colab. 
\begin{table}[ht]
\centering
\addtolength{\tabcolsep}{-4pt}  
\begin{tabular}{llll}
\includegraphics[width=0.24\columnwidth]{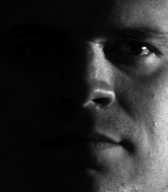} &
\includegraphics[width=0.24\columnwidth]{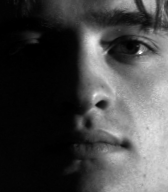} &
\includegraphics[width=0.24\columnwidth]{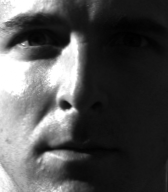} &
\includegraphics[width=0.24\columnwidth]{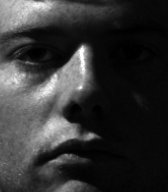} \\
\newline
\includegraphics[width=0.24\columnwidth]{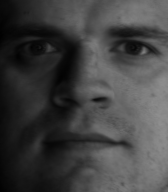} &
\includegraphics[width=0.24\columnwidth]{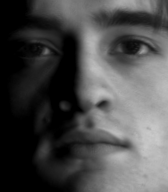} &
\includegraphics[width=0.24\columnwidth]{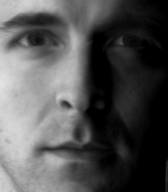} &
\includegraphics[width=0.24\columnwidth]{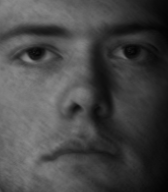} \\
\newline
\includegraphics[width=0.24\columnwidth]{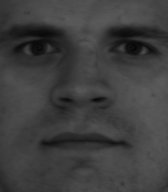}&
\includegraphics[width=0.24\columnwidth]{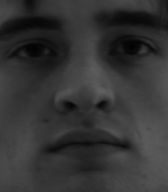}&
\includegraphics[width=0.24\columnwidth]{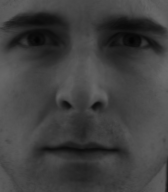}&
\includegraphics[width=0.24\columnwidth]{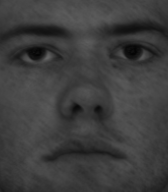} \\
\end{tabular}
\addtolength{\tabcolsep}{4pt}
\caption{Original images (the first row) and their projections to 1-dimensional subspaces computed by PCA (the second row) and computed by Gauss (the third row).}
\label{opk}
\end{table}

\begin{table}[ht]
\centering
\addtolength{\tabcolsep}{-2pt}  
\begin{tabular}{ccccccccc}
\hspace{-10pt}\includegraphics[width=0.11\columnwidth]{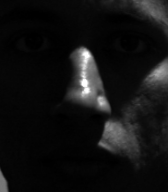} &\hspace{-10pt}
\includegraphics[width=0.11\columnwidth]{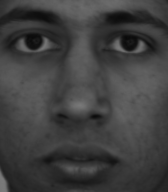} &\hspace{-10pt}
\includegraphics[width=0.11\columnwidth]{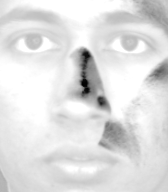}&\hspace{-10pt}
\includegraphics[width=0.11\columnwidth]{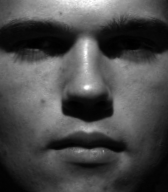} &\hspace{-10pt}
\includegraphics[width=0.11\columnwidth]{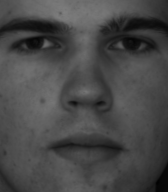} &\hspace{-10pt}
\includegraphics[width=0.11\columnwidth]{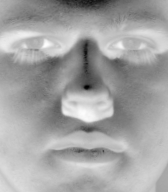}&\hspace{-10pt}
\includegraphics[width=0.11\columnwidth]{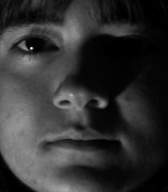} &\hspace{-10pt}
\includegraphics[width=0.11\columnwidth]{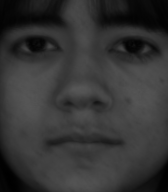} &\hspace{-10pt}
\includegraphics[width=0.11\columnwidth]{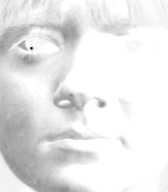}\\
\end{tabular}
\addtolength{\tabcolsep}{2pt}
\caption{Original image, projected image and the difference between them (Gauss).}
\label{okko}
\end{table}

{\bf Experiments with the background modeling.} For these experiments we use the dataset for testing background estimation algorithms SBMnet~\cite{Jodoin}. The dataset contains frames of short videos and the frame of a background for each video (so called the ground truth). Spatial resolutions of the videos vary from 240x240 to 800x600. Thus, a collection of frames of every video is a set of high-dimensional vectors (with a dimension up to 480000) that, again, lie on a low dimensional subspace $\mathcal{L}$. We assume that the dimension of $\mathcal{L}$ is 5. 
We recover $\mathcal{L}$ using PCA and the Algorithm~\ref{approximate} for  kernels $K({\mathbf x},{\mathbf y}) = \sum_{i=1}^4 (\frac{{\mathbf x}\cdot {\mathbf y}}{n})^{i}$, $K({\mathbf x},{\mathbf y}) = ({\mathbf x}\cdot {\mathbf y}) e^{-\frac{a\|{\mathbf x}-{\mathbf y}\|^2}{n}}$, $K({\mathbf x},{\mathbf y}) = ({\mathbf x}\cdot {\mathbf y}) e^{-\frac{a\|{\mathbf x}-{\mathbf y}\|}{\sqrt{n}}}$ and $K({\mathbf x},{\mathbf y}) = ({\mathbf x}\cdot {\mathbf y}) (1+\frac{a\|{\mathbf x}-{\mathbf y}\|^2}{n})^{-\frac{n+1}{2}}$ (which we simply call Kurtosis, Gauss, Laplace and Poisson respectively). Recall that, according to corollary~\ref{2-approx}, this algorithm is 2-approximating for Kurtosis. Subsequently, we compute the median of the vectors, projected onto $\mathcal{L}$, and define the latter to be the recovered background image (see Figure~\ref{prd}). Measures of consistency with the ground truth backgrounds are then calculated using Python scripts downloaded from~\cite{scenebackgroundmodeling}. Six measures are used: AGE (average of the gray-level absolute difference between a ground truth image and a computed background image), pEPs (percentage of pixels in  a computed background whose value differs from the value of the corresponding pixel in a ground truth by more than a threshold), pCEPS (percentage of pixels whose 4-connected neighbors are also error pixels), MSSSIM (estimate of the perceived visual distortion), PSNR (Peak-Signal-to-Noise-Ratio, or $10\log_{10}(\frac{(L-1)^2}{MSE})$ where $L$ is the maximum number of grey levels and MSE is the mean squared error between a ground truth and a computed background images), CQM (Color image Quality Measure). Codes that compute listed metrics can be found in~\cite{scenebackgroundmodeling}. As shown on Table~\ref{SBM}, experiments again demonstrated very similar behavior of PCA, Kurtosis, Gauss, Laplace and Poisson with very close accuracies 
of the background reconstruction. Best measures of consistency with the ground truth images were achieved for Gauss ($a=5.0, 25.0$) and Laplace ($a=5.0$). For a comparison with other methods, we also give accuracies of other methods based on low-rank approximation and an accuracy of a state-of-the-art method that was specifically tailored for that task~\cite{Javed}. On figure~\ref{backgrounds} one can see that background images computed by PCA and Gauss are almost identical, though Gauss is less likely than PCA to add local artefacts, such as blurs, noise etc.  
\begin{table}[ht]
\centering
\addtolength{\tabcolsep}{-4pt}  
\begin{tabular}{ccc}
\includegraphics[width=0.32\columnwidth]{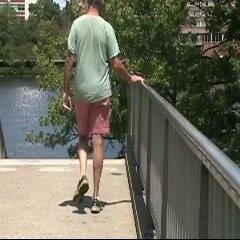} &
\includegraphics[width=0.32\columnwidth]{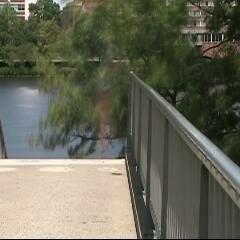} &
\includegraphics[width=0.32\columnwidth]{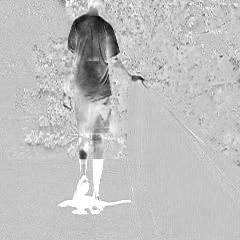}\\
\newline
\end{tabular}
\addtolength{\tabcolsep}{4pt}
\caption{Original image, its projection, and their grayscale difference (Gauss).}
\label{prd}
\end{table}

\begin{table}[ht]
\centering
\addtolength{\tabcolsep}{-4pt}  
\begin{tabular}{cccc}
& \multicolumn{2}{c}{Input images} & \\
\newline
\includegraphics[width=0.23\columnwidth]{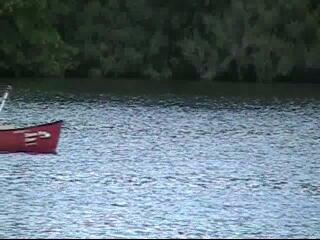} &
\includegraphics[width=0.25\columnwidth]{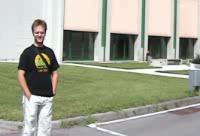} &
\includegraphics[width=0.21\columnwidth]{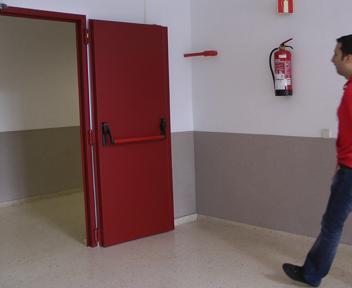} &
\includegraphics[width=0.255\columnwidth]{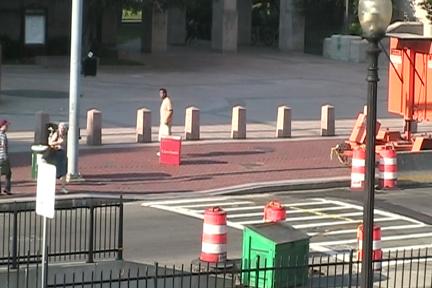}\\
\newline
\includegraphics[width=0.23\columnwidth]{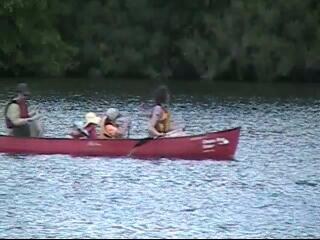} &
\includegraphics[width=0.25\columnwidth]{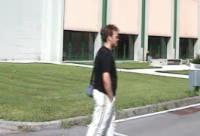} &
\includegraphics[width=0.21\columnwidth]{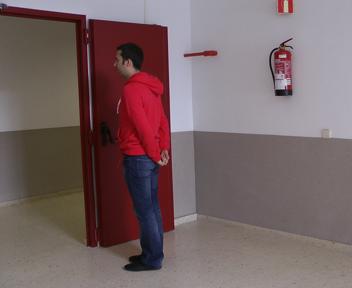} &
\includegraphics[width=0.255\columnwidth]{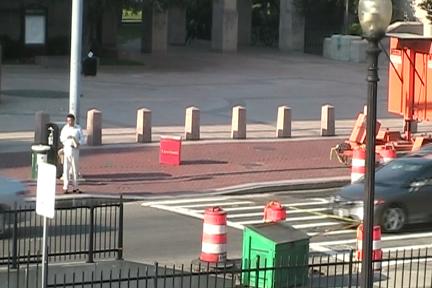}\\
\newline
\includegraphics[width=0.23\columnwidth]{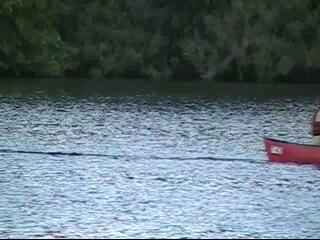} &
\includegraphics[width=0.25\columnwidth]{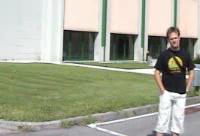} &
\includegraphics[width=0.21\columnwidth]{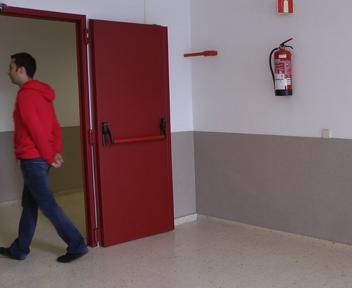} &
\includegraphics[width=0.255\columnwidth]{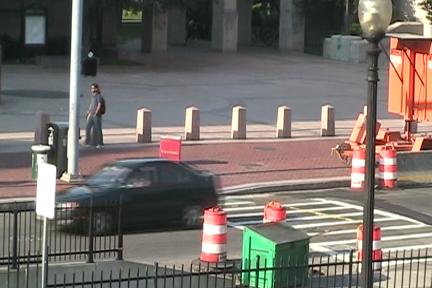}\\
\newline
& \multicolumn{2}{c}{A background computed by PCA} & \\
\newline
\includegraphics[width=0.23\columnwidth]{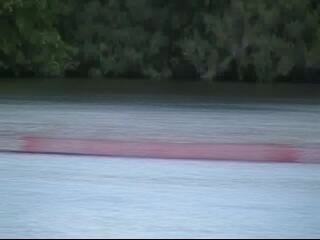} &
\includegraphics[width=0.25\columnwidth]{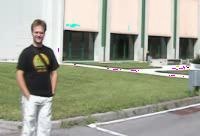} &
\includegraphics[width=0.21\columnwidth]{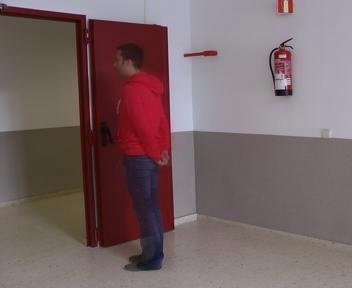} &
\includegraphics[width=0.255\columnwidth]{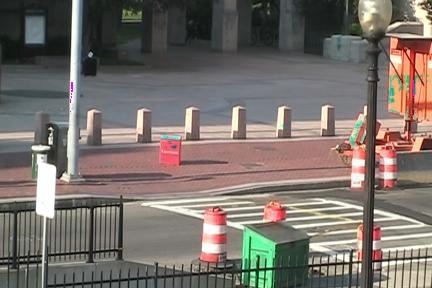}\\
\newline
& \multicolumn{2}{c}{A background computed by Gauss} & \\
\newline
\includegraphics[width=0.23\columnwidth]{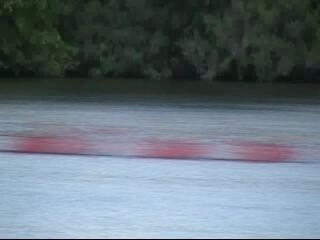} &
\includegraphics[width=0.25\columnwidth]{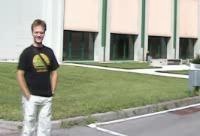} &
\includegraphics[width=0.21\columnwidth]{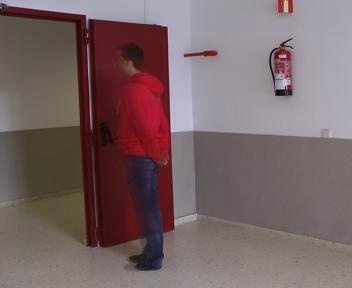} &
\includegraphics[width=0.255\columnwidth]{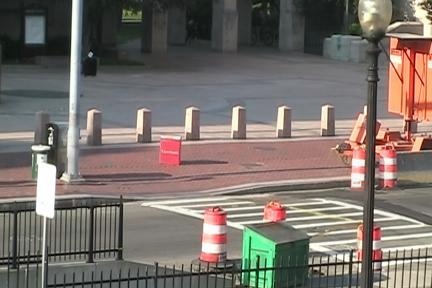}\\
\end{tabular}
\addtolength{\tabcolsep}{4pt}
\caption{Computed backgrounds are almost identical, though noise is more often in PCA's output.}
\label{backgrounds}
\end{table}

\begin{table*}
\centering
\scriptsize
\begin{tabular}{ |c|c|c|c|c|c|c|}
 \hline 
 Method & AGE & pEPs & pCEPS & MSSSIM & PSNR & CQM  \\ [0.5ex]
 \hline
MSCL (SOTA)~\cite{Javed} & {\bf 5.9547}	& {\bf 0.0524} & {\bf 0.0171} & 0.9410 & {\bf 30.8952} & {\bf 31.7049} \\
 \hline
BRTF~\cite{Qibin}  & 9.5385 & 0.1140 & 0.0876 & {\bf 0.9621} & 28.4655 & 29.3246   \\ [0.5ex]
 \hline
GoDec~\cite{Tianyi}  & 11.5934 & 0.1584 & 0.0974 & 0.8854 & 24.9954 & 25.9955 \\
\thickhline
PCA  & 9.3774  & 0.0904 & 0.0522 & 0.9027 & 26.1549 & 27.5052 \\
 \hline
Kurtosis  & 9.3509 & 0.0936 & 0.0544 &  0.9032 & 26.1475 & 27.468 \\
 \hline
Gauss ($a=0.2$)  & 9.251  & 0.09 &   0.0521 &  0.9025 & 26.1649 & 27.464 \\
 \hline
Gauss ($a=1.0$)  & 8.8679 & 0.0876 & 0.05  &   0.9049 &   26.7391 &    28.0609 \\
 \hline
Gauss ($a= 5.0$) & 8.85  &   0.0876 &  {\bf 0.0497} & 0.9045 & 26.7254 & 28.0586\\
\hline
Gauss ($a= 25.0$) & 8.8781 & 0.0886 & 0.0509 & {\bf 0.9065} & {\bf 27.0038} & {\bf 28.3913}\\
 \hline
Laplace ($a=0.2$) & 9.0745 & 0.089 &  0.0511 & 0.9032 & 26.3269 & 27.619\\
 \hline
Laplace ($a=1.0$) & 8.9428 & 0.088 & 0.0505 & 0.904 &  26.5121 & 27.819\\
 \hline
Laplace ($a=5.0$) & {\bf 8.8424} & {\bf 0.0873} & 0.0498 & 0.906 &  26.8228 & 28.1728\\
 \hline
Poisson ($a=0.04$) & 9.2481 & 0.0905 & 0.0523 & 0.9021 & 26.173 & 27.4645\\
 \hline
Poisson ($a=1.0$) & 9.2483 & 0.0906 & 0.0523 & 0.9022 & 26.173 & 27.4644\\
 \hline
Poisson ($a=25.0$) & 9.2481 & 0.0906 & 0.0523 & 0.9021 & 26.173 & 27.4646\\
 \hline
\end{tabular} 
\caption{Measures of consistency with the ground truth background image for the SBMnet dataset.}
\label{SBM}
\end{table*}

The processing of the whole SBMnet dataset using PCA/Kurtosis/Gauss/ Laplace takes approximately the same time --- 25 minutes on a cluster equipped with Intel Xeon Platinum 8168 Processors (33M Cache, 2.70 GHz) and 1TB RAM. 
The code is available on \href{https://github.com/k-nic/Alternating-Scheme}{github} to facilitate the reproducibility of our results.

{\bf Scalability of algorithms.} A major practical limitation of the alternating scheme~\ref{alternate-mod} comes from the fact that it involves an optimization over a set of functions $\mathfrak{F}$, which in applications is either a feedforward neural network (as in our specifications of the AS for SDR-ORF, MMD-PCA, HM-MMD-PCA) or a generative neural network (WD-PCA). A speed of optimization is also strongly dependant on the objective's landscape.   
Thus, for large scale datasets, with a dimension of vectors $\gg 10^3$, and a sophisticated structure of a regression function (SDR-ORF) or a data distribution (MMD-PCA, HM-MMD-PCA, WD-PCA), the alternating scheme is substantially slower in comparison with other popular methods  (such as PCA for the UDR, or SIR/KDR for the SDR).

In the special case of MMD-PCA (that includes HM-MMD-PCA), the approximate algorithm~\ref{approximate} can be used as a surrogate of the alternating scheme. It requires the same time as PCA and can be applied to datasets with a dimension of vectors $\sim 10^6-10^7$. Also, the Algorithm~\ref{approximate} can be used for an initialization of the alternating scheme.
\section{Conclusions}
We develop a new optimization framework for LDR problems. 
The alternating scheme for the optimization task demonstrates both the computational efficiency and the applicability to real-world data. The algorithm performs quite stably when we vary most of the hyperparameters, though it crucially depends on two parameters, the bandwidth of the ``smoothing'' kernel $M$, $\sigma$, and the penalty parameter $\lambda$.  We believe that the MMD-PCA/HM-MMD-PCA/WD-PCA methods for UDR could be used as an alternative to PCA in study fields in which data demonstrate ``heavy-tailed'' and ``non-Gaussian'' behavior, such as financial applications or computer vision. Also, our formulation of SDR-ORF is free from any assumptions on the distribution of input-output pairs, which makes it an alternative to other methods of efficient subspace estimation.
A more detailed report on these topics is a subject of future research.

\bibliographystyle{unsrt}  
\newcommand{\noopsort}[1]{}

\appendix

\section{Proofs for section~$\lowercase{\ref{classes}}$}
\subsection{Proof of Theorem~$\lowercase{\ref{easy}}$: given for completeness}
\begin{proof}
The inclusion $\mathcal{G}'_k \subseteq \overline{\mathcal{G}_k}^\ast $ follows from a well-known fact that ${\mathcal S}({\mathbb R}^{k})$ is dense in ${\mathcal S}'({\mathbb R}^{k})$. I.e. for any $f\in {\mathcal S}'({\mathbb R}^{k})$ one can always find a sequence $\{f_i\}\subseteq {\mathcal S}({\mathbb R}^{k})$ such that $T_{f_i} \rightarrow^\ast f$. Therefore, for any $(f\otimes \delta^{n-k})_U\in \mathcal{G}'_k$ there is a sequence $\{(T_{f_i}\otimes \delta^{n-k})_U\}\subseteq \mathcal{G}_k$ such that $(T_{f_i}\otimes \delta^{n-k})_U\rightarrow^\ast (f\otimes \delta^{n-k})_U$. Thus, $\mathcal{G}'_k \subseteq \overline{\mathcal{G}_k}^\ast $.

Since $\mathcal{G}_k \subseteq \mathcal{G}'_k$, to prove $\mathcal{G}'_k = \overline{\mathcal{G}_k}^\ast $ it is enough to show that $\mathcal{G}'_k$ is sequentially closed. 

We need a simple fact from a theory of distributions.
\begin{lemma}\label{steingauz}
If $T_i\rightarrow^\ast T$  and $\phi_i\rightarrow \phi$, then $\langle T_i, \phi_i\rangle \rightarrow \langle T, \phi\rangle$.
\end{lemma}
\begin{proof}[Proof of Lemma] Schwartz space ${\mathcal S}({\mathbb R}^{n})$ is a Fr{\'e}chet space, therefore the Banach-Steinhaus theorem applies to ${\mathcal S}'({\mathbb R}^{n})$. Since $T_i\rightarrow^\ast T$, we have  $\sup_i \vert \langle T_i, \phi\rangle\vert  <\infty$ for any $\phi\in {\mathcal S}({\mathbb R}^{n})$. From the Banach-Steinhaus theorem, applied to a set $\{T_i\}_1^\infty$, we obtain for any $\epsilon>0$, there is a neighbourhood $U$ of ${\mathbf 0}\in {\mathcal S}({\mathbb R}^{n})$ such that $\vert \langle T_i, \phi\rangle\vert <\epsilon$ whenever $\phi\in U$. Thus, $\vert \langle T_i, \phi_i-\phi\rangle\vert <\epsilon$ for a  large enough $i$. From that we conclude that $\langle T_i, \phi_i\rangle \rightarrow \langle T, \phi\rangle$.
\end{proof}
For any $T\in {\mathcal S}'({\mathbb R}^{n})$ and $\psi\in {\mathcal S}({\mathbb R}^{n-k})$, let us define $T^\psi\in {\mathcal S}'({\mathbb R}^{k})$ as $\langle T^\psi, \phi\rangle = \langle T, \phi\otimes \psi\rangle$.

Suppose that $\{f_i\}_1^\infty\subseteq {\mathcal S}'({\mathbb R}^{k})$, $\{U_i\}_1^\infty$ are such that $(f_i\otimes \delta^{n-k})_{U_i}\rightarrow^\ast f$. We need to prove that $f\in \mathcal{G}'_k$. Since a set of orthogonal matrices is compact, then one can always find a subsequence $\{U_{n_i}\}$ such that $U_{n_i}\rightarrow U$. Since $(f_{n_i}\otimes \delta^{n-k})_{U_{n_i}}\rightarrow^\ast f$ and $\phi(U_{n_i}{\mathbf x})\rightarrow \phi(U{\mathbf x})$ (for any fixed $\phi\in {\mathcal S}({\mathbb R}^{n})$), using lemma~\ref{steingauz} we obtain:
\begin{equation}
\begin{split}
\langle f_{n_i}\otimes \delta^{n-k}, \phi\rangle = 
\langle (f_{n_i}\otimes \delta^{n-k})_{U_{n_i}}, \phi(U_{n_i}{\mathbf x}) \rangle \rightarrow \langle f, \phi(U{\mathbf x}) \rangle = \langle f_{U^T}, \phi({\mathbf x}) \rangle
\end{split}
\end{equation}
Thus, we have $
f_{n_i}\otimes \delta^{n-k} \rightarrow^\ast f_{U^T}$. 
From the last we see that $f_{n_i}  \rightarrow^\ast f^\psi_{U^T}$ where $\psi$ is such that $\psi({\mathbf 0})=1$.
Therefore, $f_{U^T} = f^\psi_{U^T} \otimes \delta^{n-k}$ and $f = (f^\psi_{U^T} \otimes \delta^{n-k})_U\in \mathcal{G}'_k$.
\end{proof}

\subsection{Proof of Theorem~$\lowercase{\ref{low-rank-char}}$}
\begin{proof}[Proof of Theorem~\ref{low-rank-char} ($\Rightarrow$)]
Let us prove that from $T = (f\otimes \delta^{n-k})_U$, $f\in {\mathcal S}'({\mathbb R}^{k})$, $U^TU=I_n$ it follows that $\dim {\rm span}_{\mathbb R} \{x_1 T, x_2 T, \cdots, x_n T\}\leq k$. 

It is easy to see that $x_i [f\otimes \delta^{n-k}] = 0$ if $i>k$. If $U = [{\mathbf u}_1, \cdots, {\mathbf u}_n]^T$, then for $i>k$ we have $0 = (x_i [f\otimes \delta^{n-k}])_U = {\mathbf u}^T_i{\mathbf x}(f\otimes \delta^{n-k})_U = {\mathbf u}^T_i{\mathbf x} T$.

Thus, we have $n-k$ orthogonal vectors, ${\mathbf u}_{k+1}, \cdots, {\mathbf u}_{n}$, such that 
\begin{equation}
\begin{bmatrix}
x_1 T& \cdots & x_n T
\end{bmatrix}{\mathbf u}_i = 0.
\end{equation} 
Using standard linear algebra we obtain there are at most $k'$ distributions $x_{i_1}T, \cdots, x_{i_{k'}}T, k'\leq k$ that form a basis of ${\rm span}_{{\mathbb R}} \{x_i T\}^n_{1}$. 
\end{proof}
To prove the second part of theorem we need the following lemma.
\begin{lemma}\label{lemma} If $T\in  {\mathcal S}' ({\mathbb R}^n)$ is such that $y_i T = 0$ for any $i>k$, then $T\in \mathcal{G}'_k$.
\end{lemma}
\begin{proof}[Proof of lemma] 
Recall from functional analysis, for $f\in {\mathcal S}' ({\mathbb R}^n)$, the tempered distribution $\frac{\partial f}{\partial x_i}$ is defined by the condition $\langle \frac{\partial f}{\partial x_i}, \phi\rangle = -\langle f, \frac{\partial \phi}{\partial x_i}\rangle$.
Once the Fourier transform is applied, our lemma's dual version is equivalent to the following formulation: if $\frac{\partial f}{\partial x_i} = 0, i>k$, then $f\in \overline{\mathcal{F}_k}^\ast$. Let us prove it in this formulation. 

Recall that a set of infinitely differentiable functions with a compact support is denoted by $C^\infty_c ({\mathbb R})$.
Suppose $\phi\in {\mathcal S} ({\mathbb R}^n)$ and $p\in  C^\infty_c ({\mathbb R})$ are chosen in such a way that $\int^{\infty}_{-\infty} p(y_i) d y_i =1$, $\textbf{supp\,} p \subseteq [A, B]$. Let us define:
\begin{equation}
r({\mathbf x}) = \int^{x_i}_{-\infty} \phi({\mathbf x}_{-i}, y_i) d y_i - \int^{x_i}_{-\infty} p(y_i) d y_i \int^{\infty}_{-\infty} \phi({\mathbf x}_{-i}, y_i) d y_i
\end{equation}
It is easy to see that for any $\alpha\in {\mathbb N}^{n-1}, \alpha'\in {\mathbb N}, \beta\in {\mathbb N}^{n-1}, \beta'\in {\mathbb N}$ we have (at least one derivative over $x_i$ is present):
\begin{equation}
\begin{split}
{\mathbf x}^\alpha_{-i} x^{\alpha'}_i \frac{\partial^{\beta, 1+\beta'} r}{\partial {\mathbf x}^\beta_{-i} \partial x^{1+\beta'}_i}
 =  {\mathbf x}^\alpha_{-i} x^{\alpha'}_i \frac{\partial^{\beta, \beta'} [\phi({\mathbf x}) - p(x_i) \int^{\infty}_{-\infty} \phi({\mathbf x}_{-i}, y_i) d y_i]}{\partial {\mathbf x}^\beta_{-i} \partial x^{\beta'}_i} =\\
 {\mathbf x}^\alpha_{-i} x^{\alpha'}_i \frac{\partial^{\beta, \beta'} \phi({\mathbf x})}{\partial {\mathbf x}^\beta_{-i} \partial x^{\beta'}_i} - x^{\alpha'}_i \frac{\partial^{\beta'} p(x_i)}{\partial x^{\beta'}_i} \int^{\infty}_{-\infty} {\mathbf x}^\alpha_{-i}  \frac{\partial^{\beta} \phi({\mathbf x}_{-i}, y_i)}{\partial {\mathbf x}^\beta_{-i}} d y_i
\end{split}
\end{equation}
The terms ${\mathbf x}^\alpha_{-i} x^{\alpha'}_i \frac{\partial^{\beta, \beta'} \phi({\mathbf x})}{\partial {\mathbf x}^\beta_{-i} \partial x^{\beta'}_i}$ and $x^{\alpha'}_i \frac{\partial^{\beta'} p(x_i)}{\partial x^{\beta'}_i}$ are bounded by the definition of ${\mathcal S} ({\mathbb R}^n), C^\infty_c ({\mathbb R})$. The boundedness of $\int^{\infty}_{-\infty} {\mathbf x}^\alpha_{-i}  \frac{\partial^{\beta} \phi({\mathbf x}_{-i}, y_i)}{\partial {\mathbf x}^\beta_{-i}} d y_i$ is a consequence of the inequality  
$\vert {\mathbf x}^\alpha_{-i}  \frac{\partial^{\beta} \phi({\mathbf x}_{-i}, y_i)}{\partial {\mathbf x}^\beta_{-i}}\vert  \leq \frac{C}{1+y_i^2} $ (which holds because $\phi\in {\mathcal S} ({\mathbb R}^n)$).

Analogously (when not a single derivative over $x_i$ is present):
\begin{equation}
\begin{split}
{\mathbf x}^\alpha_{-i} x^{\alpha'}_i \frac{\partial^{\beta} r}{\partial {\mathbf x}^\beta_{-i} }
 = x^{\alpha'}_i \int^{x_i}_{-\infty}  {\mathbf x}^\alpha_{-i} \frac{\partial^{\beta} \phi({\mathbf x}_{-i}, y_i)}{\partial {\mathbf x}^\beta_{-i}} d y_i - 
x^{\alpha'}_i\int^{x_i}_{-\infty} p(y_i) d y_i \int^{\infty}_{-\infty} {\mathbf x}^\alpha_{-i} \frac{\partial^{\beta} \phi({\mathbf x}_{-i}, y_i)}{\partial {\mathbf x}^\beta_{-i}} d y_i = \\
 = x^{\alpha'}_i (1-\int^{x_i}_{-\infty} p(y_i) d y_i ) \int^{x_i}_{-\infty}  {\mathbf x}^\alpha_{-i} \frac{\partial^{\beta} \phi({\mathbf x}_{-i}, y_i)}{\partial {\mathbf x}^\beta_{-i}} d y_i - 
x^{\alpha'}_i \int^{x_i}_{-\infty} p(y_i) d y_i \int^{\infty}_{x_i}  {\mathbf x}^\alpha_{-i} \frac{\partial^{\beta} \phi({\mathbf x}_{-i}, y_i)}{\partial {\mathbf x}^\beta_{-i}} d y_i 
\end{split}
\end{equation}
The second term is 0 when $x_i\leq A$. It is also bounded when $x_i > A$ because $\vert {\mathbf x}^\alpha_{-i} \frac{\partial^{\beta} \phi({\mathbf x}_{-i}, y_i)}{\partial {\mathbf x}^\beta_{-i}}\vert \leq \frac{C'}{(1+y^2_i)^{\alpha'+1}}$. Therefore,
\begin{equation}
\left\vert  x^{\alpha'}_i \int\limits^{\infty}_{x_i}  {\mathbf x}^\alpha_{-i} \frac{\partial^{\beta} \phi({\mathbf x}_{-i}, y_i)}{\partial {\mathbf x}^\beta_{-i}}d y_i\right\vert  \leq \vert x_i\vert ^{\alpha'} \int\limits^{\infty}_{x_i}\frac{C'}{(1+y^2_i)^{\alpha'+1}}d y_i.
\end{equation}
The latter is bounded because $\lim_{x_i\rightarrow +\infty} \vert x_i\vert ^{\alpha'} \int^{\infty}_{x_i}\frac{C'}{(1+y^2_i)^{\alpha'+1}}d y_i = 0$.

The first term is 0 when $x_i \geq B$ and for $x_i < B$ it satisfies:
\begin{equation}
\left\vert  x^{\alpha'}_i \int\limits^{x_i}_{-\infty}  {\mathbf x}^\alpha_{-i} \frac{\partial^{\beta} \phi({\mathbf x}_{-i}, y_i)}{\partial {\mathbf x}^\beta_{-i}}d y_i\right\vert  \leq \vert x_i\vert ^{\alpha'} \int\limits^{x_i}_{-\infty}  \frac{C'}{(1+y^2_i)^{\alpha'+1}}d y_i.
\end{equation}
The latter is also bounded, since $\lim_{x_i\rightarrow -\infty} \vert x_i\vert ^{\alpha'} \int^{x_i}_{-\infty}\frac{C'}{(1+y^2_i)^{\alpha'+1}}d y_i = 0$.

Thus, ${\mathbf x}^\alpha \frac{\partial^{\beta} r({\mathbf x})}{\partial {\mathbf x}^\beta}$ is bounded and $r\in {\mathcal S} ({\mathbb R}^n)$. Therefore, $\frac{\partial f}{\partial x_i} = 0$ implies:
\begin{equation}
\langle f, \frac{\partial r}{\partial x_i}\rangle = 0 \Rightarrow f[\phi] = f[p(x_i) \int^{\infty}_{-\infty} \phi({\mathbf x}_{-i}, y_i) d y_i].
\end{equation}
Since this sequence of arguments works for any $i>k$, we can apply them sequentially to initial $\phi\in {\mathcal S} ({\mathbb R}^n)$ w.r.t. $x_{k+1}, ..., x_{n}$. Thus, for any $p_{k+1}, ..., p_{n}\in C_c ({\mathbb R})$ such that $\int^{\infty}_{-\infty} p_i(y_i) d y_i =1$ we obtain:
\begin{equation}
f[\phi] = f[p_{k+1}(x_{k+1}) \cdots p_{n}(x_{n}) \int\limits_{{\mathbb R}^{n-k}} \phi({\mathbf x}_{1:k}, {\mathbf x}_{k+1:n}) d {\mathbf x}_{k+1:n}].
\end{equation}
Moreover, since $C^\infty_c ({\mathbb R})$ is dense in ${\mathcal S} ({\mathbb R})$, we can assume that $p_{k+1}, ..., p_{n}\in {\mathcal S} ({\mathbb R})$.
For the inverse Fourier transform $T = \mathcal{F}^{-1}[f]$ the latter condition becomes equivalent to:
\begin{equation}
\langle T, \phi\rangle = \langle T, p'_{k+1}(x_{k+1}) \cdots p'_{n}(x_{n}) \phi({\mathbf x}_{1:k}, {\mathbf 0}_{k+1:n}) \rangle
\end{equation}
for any $p'_{k+1}, ..., p'_{n}\in {\mathcal S} ({\mathbb R})$ such that $p'_i(0)=1$. Let us define $p'_i(x_i) = e^{-x^2_i}$. It is easy to check that $T =  g \otimes \delta^{n-k}$ where $g\in {\mathcal S}' ({\mathbb R}^k), \langle g, \psi\rangle = \langle T, e^{-\vert {\mathbf x}_{k+1:n}\vert ^2} \psi({\mathbf x}_{1:k}) \rangle$ for $\psi\in {\mathcal S} ({\mathbb R}^k)$. Thus, $T\in \mathcal{G}'_k$ and lemma is proved.
\end{proof}
\begin{proof} [Proof of Theorem~\ref{low-rank-char} ($\Leftarrow$)] If $\dim {\rm span}_{\mathbb R} \{x_1 T, x_2 T, \cdots, x_n T\}\leq k$, then 
\begin{equation}
\dim \{{\mathbf v}\in {\mathbb R}^n\vert  [x_1 T, \cdots, x_n T]{\mathbf v} = 0\}\geq n-k.
\end{equation} 
Thus, there exist at least $n-k$ orthonormal vectors ${\mathbf v}_{k+1}, \cdots, {\mathbf v}_{n}$, such that $[x_1 T, \cdots, x_n T]{\mathbf v}_i = 0$. Therefore, $[x_1 T, \cdots, x_n T]{\mathbf v}_i = ({\mathbf v}^T_i {\mathbf x}) T = 0$. 

Let us complete ${\mathbf v}_{k+1}, \cdots, {\mathbf v}_{n}$ to form an orthonormal basis of ${\mathbb R}^n$: ${\mathbf v}_{1}, \cdots, {\mathbf v}_{n}$. Let us define a matrix $V = [{\mathbf v}_{1}, \cdots, {\mathbf v}_{n}]$. It is easy to see that:
\begin{equation}
\left(({\mathbf v}^T_i {\mathbf x}) T\right)_V = ({\mathbf v}^T_i V {\mathbf x}) T_V = x_i T_V
\end{equation}
Since for $i>k$ we have $({\mathbf v}^T_i {\mathbf x}) T = 0$, then $x_i T_V = 0$. Using lemma~\ref{lemma} we obtain $T_V\in \mathcal{G}'_k$. Therefore, $(T_V)_{V^T} = T\in \mathcal{G}'_k$. Theorem proved.
\end{proof}

\section{Structure of WD-PCA}\label{WD-Villani}
Recall that $({\mathbb R}^n, \Vert \cdot \Vert )$ is a 
Banach space and $p\geq 1$. 
Now, let us consider an optimization problem: for a given $X \in {\mathbb R}^{n\times N}$ solve
\begin{equation}\label{robust-pca}
\Vert X-L\Vert_p \rightarrow \min_{{\rm rank} (L)\leq k}
\end{equation}
where $\Vert \cdot\Vert_p $ is a norm on ${\mathbb R}^{n\times N}$ defined by $\Vert [{\mathbf s}_1, \cdots, {\mathbf s}_N]\Vert  \eqdef (\sum_{i=1}^N \Vert {\mathbf s}_i\Vert^p)^{1/p} $.

The following simple theorem shows that the two tasks are connected so that the solution of one directly leads to another's solution.
\begin{theorem}\label{transport}
Given data points $\{{\mathbf x}_1, \cdots, {\mathbf x}_N\}$, let $X = [{\mathbf x}_1, \cdots, {\mathbf x}_N]\in {\mathbb R}^{n\times N}$. Then, 
\begin{equation}
\min_{\nu\in {\mathcal P}_k}W_p (\mu_{\rm{data}}, \nu) = \frac{1}{N^p} \min_{Y\in {\mathbb R}^{n\times N}, {\rm rank}(Y)\leq k}{\Vert X-Y\Vert_p }.
\end{equation}
Moreover, $\min_{\nu\in {\mathcal P}_k}W_p (\mu_{\rm{data}}, \nu)$ is attained on $\nu^\ast$, where $\nu^\ast$ is a uniform distribution over $\{{\mathbf y}_i\}_{i=1}^N$ and $[{\mathbf y}_1, \cdots, {\mathbf y}_N] \in \arg\min_{Y\in {\mathbb R}^{n\times N}, {\rm rank}(Y)\leq k}{\Vert X-Y\Vert _p}$.
\end{theorem}
\begin{proof}
Let us prove first that $\inf_{\mu\in {\mathcal P}_k}W_p (\mu_{\rm{data}}, \mu) \leq \frac{1}{N} \Vert X-Y^\ast\Vert_p $ where 
\begin{equation}
Y^\ast = [{\mathbf y}_1, \cdots, {\mathbf y}_N] \in \arg\min_{Y\in {\mathbb R}^{n\times N}, {\rm rank}(Y)\leq k}{\Vert X-Y\Vert_p }.
\end{equation}
Let $\pi$ be a uniform distribution over $\{({\mathbf x}_i, {\mathbf y}_i)\}_{i=1}^N$ and $\mu^\ast$ be a uniform distribution over $\{ {\mathbf y}_i\}_{i=1}^N$. Since $\pi\in \Pi(\mu_{{\rm data}}, \mu^\ast)$, we obtain $W_p (\mu_{{\rm data}}, \mu^\ast) \leq (\frac{1}{N}\sum_{i=1}^N\Vert {\mathbf x}_i- {\mathbf y}_i\Vert^p)^{1/p}  = \frac{1}{N^p}\Vert X-Y^\ast\Vert_p $. The support of $\mu^\ast$ is $k$-dimensional, because ${\rm rank}(Y^\ast) \leq k$. Thus, we have $\mu^\ast\in {\mathcal P}_k$ and 
$\inf_{\mu\in {\mathcal P}_k}W_p (\mu_{\rm{data}}, \mu) \leq W_p (\mu_{{\rm data}}, \mu^\ast) \leq \frac{1}{N^p} \Vert X-Y^\ast\Vert_p $. Now, if we prove the inverse inequality, i.e. $\inf_{\mu\in {\mathcal P}_k}W_p (\mu_{\rm{data}}, \mu) \geq \frac{1}{N^p} \Vert X-Y^\ast\Vert_p $, this will imply that $\inf_{\mu\in {\mathcal P}_k}W_p (\mu_{\rm{data}}, \mu) = \frac{1}{N^p} \Vert X-Y^\ast\Vert_p $ and therefore, $\inf_{\mu\in {\mathcal P}_k}W_p (\mu_{\rm{data}}, \mu) = W_p(\mu_{{\rm data}}, \mu^\ast)$. This will in the end give us
$\mu^\ast \in \arg \inf_{\mu\in {\mathcal P}_k}W_p (\mu_{\rm{data}}, \mu)$.

Let $\{\mu_t\}_{1}^\infty$ be such that 
$\mu_t\in {\mathcal P}_k$ and $W_p (\mu_{\rm{data}}, \mu_t) - \inf_{\mu\in {\mathcal P}_k}W_p (\mu_{\rm{data}}, \mu) \rightarrow 0$. Let  $L_t$ denote a $k$-dimensional support of $\mu_t$ and $P_t$ is a projection operator onto $L_t$. 

Let $\mu^\ast_{t}$ be a uniform distribution over $\{P_{t}{\mathbf x}_1, \cdots, P_{t}{\mathbf x}_N\}$, i.e. $\mu^\ast_{t}(A) = \frac{1}{N}\sum_{i=1}^N [P_{t}{\mathbf x}_i\in A]$. It is easy to see that $W_p(\mu^\ast_{t}, \mu_{{\rm data}})\leq W_p(\mu_{t}, \mu_{{\rm data}})$, because $\mu^\ast_{t}$ and $\mu_{t}$ share the same $k$-dimensional support $L_{t}$, but the ``transportation of a mass'' concentrated in point ${\mathbf x}_i$ of the empirical distribution $\mu_{{\rm emp}}$ can be most optimally done by just moving it to $P_{t}{\mathbf x}_i$ (i.e. to the closest point on $L_{t}$). Thus, we have $\inf_{\mu\in {\mathcal P}_k}W_p (\mu_{\rm{data}}, \mu) \leq W_p (\mu_{\rm{data}}, \mu^\ast_{t})  \leq W_p (\mu_{\rm{data}}, \mu_{t})$, and therefore, $W_p (\mu_{\rm{data}}, \mu^\ast_{t}) - \inf_{\mu\in {\mathcal P}_k}W_p (\mu_{\rm{data}}, \mu) \rightarrow 0$. 

Since a set of projection operators is compact, one can always extract a subsequence $\{P_{t_s}\}_{s=1}^\infty$, such that $P_{t_s} \rightarrow P$. It is easy to see that $\mu^\ast_{t_s}\rightarrow \mu^{\ast\ast}$ (i.e. $W_p (\mu^\ast_{t_s}, \mu^{\ast\ast})\rightarrow 0$) where $\mu^{\ast\ast}$ is a uniform distribution over $\{P{\mathbf x}_1, \cdots, P{\mathbf x}_N\}$. For that distribution we have 
\begin{equation}
\begin{split}
W_p (\mu_{\rm{data}}, \mu^{\ast\ast}) = \lim_{s\rightarrow \infty} W_p (\mu_{\rm{data}}, \mu^\ast_{t_s}) = \inf_{\mu\in {\mathcal P}_k}W_p (\mu_{\rm{data}}, \mu).
\end{split}
\end{equation}
Thus, the infinum is attained on $\mu^{\ast\ast}$.

It is easy to see that $ W_p(\mu_{{\rm data}}, {\mu^{\ast\ast}}) = \frac{1}{N^p} \Vert X-PX\Vert_p $. Since ${\rm rank}(PX)\leq k$ we obtain $W_p (\mu_{\rm{data}}, \mu^{\ast\ast}) \geq \frac{1}{N^p} \min_{Y\in {\mathbb R}^{n\times N}, {\rm rank}(Y)\leq k}{\Vert X-Y\Vert_p } = \Vert X-Y^\ast\Vert_p$. This completes the proof.
\end{proof}

Note that in the case of $l_1$ norm and $p=1$, i.e. $\Vert {\mathbf x}\Vert =\sum_{i}\vert x_i\vert $, the task~\ref{robust-pca} corresponds to the well-studied {\em robust PCA} problem~\cite{Candes}. 
If, instead of the $l_1$-norm, we use the $l_2$-norm and $p\geq 1$, this leads to another task:
\begin{equation}\label{outlier-pursuit}
\Vert X-L\Vert _{p,2}\rightarrow \min_{{\rm rank} (L)\leq k}
\end{equation}
where $\Vert [{\mathbf s}_1, \cdots, {\mathbf s}_N]\Vert _{p,2} = (\sum_{i=1}^N\|{\mathbf s}_i\|_2^p)^{1/p}$. This task has many applications in mathematics and is known as {\em  the subspace approximation problem}~\cite{AmitVishnoi} .

\section{Proper kernels and proof of Theorem~$\lowercase{\ref{rank-k}}$}

\subsection{Proof of Theorem~$\lowercase{\ref{rank-k}}$}
Let us first show that $\langle f \vert  M \vert  g \rangle$ is defined for  any $f = (T_a\otimes \delta^{n-k})_U\in \mathcal{G}_k$ and $g = (T_b\otimes \delta^{n-k})_V\in \mathcal{G}_k$ where $a,b\in L_1({\mathbb R}^k)$.
 We have 
\begin{equation}
\begin{split}
T_{f_\epsilon} = (T_a\otimes \delta^{n-k})_U\ast G_\epsilon^n =  ((T_a\ast  G_\epsilon^k) \otimes  T_{G_\epsilon^{n-k}} )_U
\end{split}
\end{equation}
Let us denote $a_\epsilon = a\ast G_\epsilon^k$ and $b_\epsilon = b\ast G_\epsilon^k$.
It is easy to see that 
\begin{equation}
f_\epsilon = (a_\epsilon ({\mathbf x}_{1:k})  G_\epsilon^{n-k} ({\mathbf x}_{k+1:n}))_U\in {\mathcal S} ({\mathbb R}^n).
\end{equation} 
From a well-known property of the Weierstrass transform we have
\begin{equation}
\Vert f_\epsilon\Vert _{L_1} = \Vert a_\epsilon\Vert _{L_1} \cdot \Vert G_\epsilon^{n-k}\Vert _{L_1}\leq \Vert a\Vert _{L_1}.
\end{equation}
From this we obtain that 
\begin{equation}
\begin{split}
\vert \langle f_\epsilon \vert  M \vert  g_\epsilon \rangle\vert  \leq \max_{{\mathbf x}, {\mathbf y}} \vert M({\mathbf x}, {\mathbf y})\vert  \,\, \Vert f_\epsilon\Vert _{L_1}\Vert g_\epsilon\Vert _{L_1} \leq  \max_{{\mathbf x}, {\mathbf y}} \vert M({\mathbf x}, {\mathbf y})\vert  \,\, \Vert a\Vert _{L_1}\Vert b\Vert _{L_1}< \infty.
\end{split}
\end{equation}
Thus, $\langle f_\epsilon \vert  M \vert  g_\epsilon \rangle$ is properly defined and
\begin{equation}
\begin{split}
\langle f_\epsilon \vert  M \vert  g_\epsilon \rangle = 
\int\limits_{{\mathbb R}^n\times {\mathbb R}^n} a^\ast_\epsilon ({\mathbf x}_{1:k})   G_\epsilon^{n-k} ({\mathbf x}_{k+1:n}) M(U^T{\mathbf x}, V^T {\mathbf y})  b_\epsilon ({\mathbf y}_{1:k})   G_\epsilon^{n-k} ({\mathbf y}_{k+1:n})  d{\mathbf x}d{\mathbf y} = \\
\int\limits_{{\mathbb R}^k\times {\mathbb R}^k} a^\ast_\epsilon({\mathbf x}_{1:k})  M_\epsilon({\mathbf x}_{1:k}, {\mathbf y}_{1:k}) b_\epsilon({\mathbf y}_{1:k}) d{\mathbf x}_{1:k}d{\mathbf y}_{1:k}
\end{split}
\end{equation}
where 
\begin{equation}
\begin{split}
M_\epsilon({\mathbf x}_{1:k},  {\mathbf y}_{1:k}) =  \int\limits_{{\mathbb R}^{n-k}\times {\mathbb R}^{n-k}} G_\epsilon^{n-k} ({\mathbf x}_{k+1:n})  M(U^T{\mathbf x}, V^T {\mathbf y})  G_\epsilon^{n-k} ({\mathbf y}_{k+1:n}) d{\mathbf x}_{k+1:n}d{\mathbf y}_{k+1:n}.
\end{split}
\end{equation}
Let $U_k, V_k\in {\mathbb R}^{n\times n}$ be matrices that comprise the first $k$ rows of $U, V$ correspondingly and $n-k$ zero rows below. Also, let $L$ denote the Lipschitz constant for $M$ such that $\vert M({\mathbf x}, {\mathbf y})-M({\mathbf x}', {\mathbf y}')\vert \leq L (\vert {\mathbf x}-{\mathbf x}'\vert +\vert {\mathbf y}-{\mathbf y}'\vert )$.
For the function $M_\epsilon({\mathbf x}_{1:k},  {\mathbf y}_{1:k})$ we have:
\begin{equation}
\begin{split}
\vert M_\epsilon({\mathbf x}_{1:k},  {\mathbf y}_{1:k}) - M(U_k^T{\mathbf x}, V_k^T {\mathbf y})\vert  = 
\vert \int\limits_{{\mathbb R}^{2n-2k}} G_\epsilon^{n-k} ({\mathbf x}_{k+1:n}) \big( M(U^T{\mathbf x}, V^T {\mathbf y}) - \\ - M(U_k^T{\mathbf x}, V_k^T {\mathbf y})\big) G_\epsilon^{n-k} ({\mathbf y}_{k+1:n})  d{\mathbf x}_{k+1:n}d{\mathbf y}_{k+1:n} \vert  
\leq \\ 
L \vert \int\limits_{{\mathbb R}^{2n-2k}} G_\epsilon^{n-k} ({\mathbf x}_{k+1:n}) \big( \vert (U-U_k)^T{\mathbf x}\vert  + \vert (V-V_k)^T {\mathbf y}\vert \big) 
\cdot G_\epsilon^{n-k} ({\mathbf y}_{k+1:n})d{\mathbf x}_{k+1:n}d{\mathbf y}_{k+1:n} \vert 
= \\
L \vert \int\limits_{{\mathbb R}^{2n-2k}} G_\epsilon^{n-k} ({\mathbf x}_{k+1:n}) \left( \vert {\mathbf x}_{k+1:n}\vert  + \vert {\mathbf y}_{k+1:n}\vert \right) \cdot G_\epsilon^{n-k} ({\mathbf y}_{k+1:n})d{\mathbf x}_{k+1:n}d{\mathbf y}_{k+1:n} \vert  
= \\
2 L \int\limits_{{\mathbb R}^{n-k}} \vert {\mathbf x}_{k+1:n}\vert  G_\epsilon^{n-k} ({\mathbf x}_{k+1:n}) d{\mathbf x}_{k+1:n} =  
2 L\epsilon^{n-k}  \int\limits_{{\mathbb R}^{n-k}} \vert {\mathbf x}_{k+1:n}\vert  G_1^{n-k} ({\mathbf x}_{k+1:n}) d{\mathbf x}_{k+1:n}.
\end{split}
\end{equation}

Thus, there exists bounded $\tilde {M}({\mathbf x}_{1:k}, {\mathbf y}_{1:k}) =  M(U_k^T{\mathbf x}, V_k^T {\mathbf y})$ such that
\begin{equation}
M_\epsilon({\mathbf x}_{1:k},  {\mathbf y}_{1:k}) \mathop\rightarrow\limits^{\epsilon\rightarrow 0}  \tilde {M}({\mathbf x}_{1:k}, {\mathbf y}_{1:k}) {\rm \,\,\,in\,\,}L_\infty ({\mathbb R}^{2k}).
\end{equation}
Further we assume that $\epsilon>0$ is small enough, so that $M_\epsilon({\mathbf x}_{1:k},  {\mathbf y}_{1:k}) \leq C = 2\max \vert  M({\mathbf x},{\mathbf y})\vert $.
Now we have:
\begin{equation}
\begin{split}
\vert \langle f_\epsilon \vert  M \vert  g_\epsilon \rangle - 
\int\limits_{{\mathbb R}^k\times {\mathbb R}^k} a^\ast({\mathbf x}_{1:k})  \tilde {M}({\mathbf x}_{1:k}, {\mathbf y}_{1:k}) b({\mathbf y}_{1:k})   d{\mathbf x}_{1:k}d{\mathbf y}_{1:k}\vert  = \\
\vert \int\limits_{{\mathbb R}^k\times {\mathbb R}^k} \big(a^\ast_\epsilon({\mathbf x}_{1:k})  M_\epsilon({\mathbf x}_{1:k}, {\mathbf y}_{1:k}) b_\epsilon({\mathbf y}_{1:k}) - a^\ast({\mathbf x}_{1:k})  \tilde {M}({\mathbf x}_{1:k}, {\mathbf y}_{1:k}) b({\mathbf y}_{1:k})\big)  d{\mathbf x}_{1:k}d{\mathbf y}_{1:k}\vert  =  \\
\vert \int\limits_{{\mathbb R}^k\times {\mathbb R}^k} M_\epsilon({\mathbf x}_{1:k}, {\mathbf y}_{1:k}) a^\ast_\epsilon({\mathbf x}_{1:k})\big( b_\epsilon({\mathbf y}_{1:k})  - b({\mathbf y}_{1:k})\big)  d{\mathbf x}_{1:k}d{\mathbf y}_{1:k} + \\
\int\limits_{{\mathbb R}^k\times {\mathbb R}^k} M_\epsilon({\mathbf x}_{1:k}, {\mathbf y}_{1:k})b({\mathbf y}_{1:k}) \big(a^\ast_\epsilon({\mathbf x}_{1:k})   -  a^\ast({\mathbf x}_{1:k})  \big)  d{\mathbf x}_{1:k}d{\mathbf y}_{1:k} + \\
\int\limits_{{\mathbb R}^k\times {\mathbb R}^k}  a^\ast({\mathbf x}_{1:k})   b({\mathbf y}_{1:k})\big(M_\epsilon({\mathbf x}_{1:k}, {\mathbf y}_{1:k}) - \tilde {M}({\mathbf x}_{1:k}, {\mathbf y}_{1:k})\big)  d{\mathbf x}_{1:k}d{\mathbf y}_{1:k} \vert  \leq \\
C\Vert a_\epsilon\Vert _{L_1} \Vert  b_\epsilon - b\Vert _{L_1} + C\Vert b\Vert _{L_1} \Vert  a_\epsilon - a\Vert _{L_1} + \Vert a^\ast({\mathbf x}_{1:k})   b({\mathbf y}_{1:k})\Vert _{L_1} \Vert  M_\epsilon -  \tilde {M}\Vert _{L_\infty}.
\end{split}
\end{equation}
It is well-known (e.g. see Theorem 2.25 from~\cite{Advanced}) that 
$
\Vert a_\epsilon-  a\Vert _{L_p}$, $\Vert b_\epsilon  -  b\Vert _{L_p} \rightarrow 0$, $\Vert a_\epsilon\Vert _{L_1} \leq \Vert a\Vert _{L_1}$ and $\Vert  M_\epsilon -  \tilde {M}\Vert _{L_\infty}\rightarrow 0$. Thus, $\lim_{\epsilon\rightarrow 0}\langle f_\epsilon \vert  M \vert  g_\epsilon \rangle$ exists and $\langle f \vert  M \vert  g \rangle$ is defined.

Let us now prove that $\rank M_f \leq k$.
The function $f\in \mathcal{G}'_k$ is such that $f = (T_{g} \otimes \delta^{n-k})_U$ where $\{x_ig\}_{i=1}^k\subseteq L_1({\mathbb R}^k)$ and $U = \begin{bmatrix}
{\mathbf w}_1, \cdots, {\mathbf w}_n
\end{bmatrix}$ is an orthogonal matrix. By construction,
\begin{equation}
\begin{split}
\langle x_i f\vert  M \vert  x_j f\rangle = \langle (x_i f)_{U^T}\vert  M(U^T{\mathbf x}, U^T{\mathbf y}) \vert  (x_j f)_{U^T}\rangle = 
\langle {\mathbf w}^T_i {\mathbf x}\, T_{g} \otimes \delta^{n-k}\vert  M(U^T{\mathbf x}, U^T{\mathbf y}) \vert  {\mathbf w}^T_j {\mathbf x}\, T_{g} \otimes \delta^{n-k}\rangle.
\end{split}
\end{equation}
Let us now denote $V = \begin{bmatrix}
{\mathbf u}_1, \cdots, {\mathbf u}_n
\end{bmatrix}\in {\mathbb R}^{k\times n}$ a submatrix of $U$ in which only first $k$ rows of $U$ are present.
Then, the latter integral is equal to
\begin{equation}
\begin{split}
\iint\limits_{{\mathbb R}^{k}\times {\mathbb R}^{k}} {\mathbf u}^T_i {\mathbf x}_{1:k} {\mathbf y}_{1:k}^T {\mathbf u}_j  g({\mathbf x}_{1:k})^\ast M(V^T{\mathbf x}_{1:k}, V^T{\mathbf y}_{1:k}) g({\mathbf y}_{1:k})  d {\mathbf x}_{1:k} d {\mathbf y}_{1:k} = {\mathbf u}^T_i B {\mathbf u}_j
\end{split}
\end{equation}
where 
\begin{equation}
\begin{split}
B = \begin{bmatrix} \langle x_i g \vert  M' \vert  x_j g \rangle
\end{bmatrix}_{1\leq i, j \leq k}, M'({\mathbf x}_{1:k}, {\mathbf y}_{1:k}) = M(V^T{\mathbf x}_{1:k}, V^T{\mathbf y}_{1:k})
\end{split}
\end{equation}
 is the Gram matrix of the collection $\{x_ig({\mathbf x}_{1:k})\}_{i=1}^k\subseteq L_1 ({\mathbb R}^k)$. 

Obviously, $\rank M_f = \rank  \begin{bmatrix}  {\rm Re\,}
{\mathbf u}^T_i B {\mathbf u}_j
\end{bmatrix}_{1\leq i, j \leq n} = \rank  V^T  ({\rm Re\,} B) V \leq \rank V = k$.

\section{Proofs of Theorem~$\lowercase{\ref{metrization}}$ and~$\lowercase{\ref{existence}}$}
For any $f = (T_l\otimes \delta^{n-k})_U\in \mathcal{G}_k$ and $\sigma>0$, let us define $f_\sigma$ as
\begin{equation}
\begin{split}
T_{f_\sigma} = (T_l\otimes \delta^{n-k})_U \ast G^{n}_{\sigma} = (T_{l_\sigma}\otimes T_{G^{n-k}_{\sigma}})_U\\
f_\sigma = (l_\sigma ({\mathbf x}_{1:k})  G^{n-k}_{\sigma}({\mathbf x}_{k+1:n}) )_U\\
l_\sigma = l\ast G^{k}_{\sigma}.
\end{split}
\end{equation}
We have $T_{f_\sigma} \rightarrow^\ast (T_{l}\otimes \delta^{n-k})_U$ as $\sigma\rightarrow +0$.
\begin{lemma}\label{entries} For any $f\in \mathcal{G}_k$, $\lim_{\sigma\rightarrow +0} \langle x_i f_\sigma \vert  M \vert  x_j f_\sigma\rangle = 0$, for any $(i,j)\notin \{1,...,k\}^2$, and $\sup_{\sigma\in [0,1]} \langle x_i f_\sigma \vert  M \vert  x_j f_\sigma\rangle < \infty$, for any $(i,j)\in \{1,...,k\}^2$.
\end{lemma}
\begin{proof}
W.l.o.g. we can assume that $f = T_l\otimes \delta^{n-k}, l\in {\mathcal S} ({\mathbb R}^k)$.
If $i>k, j\leq k$ we have
\begin{equation}
\begin{split}
\langle x_i f_\sigma \vert  M \vert  x_j f_\sigma\rangle = 
\frac{1}{(2\pi\sigma^2)^{n-k}}\iint_{{\mathbb R}^n\times {\mathbb R}^n} x_i y_j e^{-\frac{\vert {\mathbf x}_{k+1:n}\vert ^2}{2\sigma^2}} l_\sigma({\mathbf x}_{1:k})  M({\mathbf x}, {\mathbf y}) e^{-\frac{\vert {\mathbf y}_{k+1:n}\vert ^2}{2\sigma^2}} l_\sigma({\mathbf y}_{1:k})  d {\mathbf x} d {\mathbf y} = \\ 
\int_{{\mathbb R}^n} \frac{1}{\sqrt{2\pi\sigma^2}^{n-k}} x_i e^{-\frac{\vert {\mathbf x}_{k+1:n}\vert ^2}{2\sigma^2}} l_\sigma({\mathbf x}_{1:k}) P({\mathbf x}) d {\mathbf x} 
\end{split}
\end{equation}
where $P({\mathbf x}) = \int_{{\mathbb R}^n} \frac{1}{\sqrt{2\pi\sigma^2}^{n-k}} y_j M({\mathbf x}, {\mathbf y}) e^{-\frac{\vert {\mathbf y}_{k+1:n}\vert ^2}{2\sigma^2}} l_\sigma({\mathbf y}_{1:k})  d {\mathbf y}$. Using the Hölder inequality we obtain
\begin{equation}
\begin{split}
\vert \langle x_i f_\sigma \vert  M \vert  x_j f_\sigma\rangle\vert  \leq \Vert \frac{1}{\sqrt{2\pi\sigma^2}^{n-k}} x_i e^{-\frac{\vert {\mathbf x}_{k+1:n}\vert ^2}{2\sigma^2}} l_\sigma({\mathbf x}_{1:k})\Vert _{L_1 ({\mathbb R}^n)} \Vert P\Vert _{L_{\infty} ({\mathbb R}^n)} = \\ \Vert \frac{1}{\sqrt{2\pi\sigma^2}^{n-k}} x_i e^{-\frac{\vert {\mathbf x}_{k+1:n}\vert ^2}{2\sigma^2}}\Vert _{L_1 ({\mathbb R}^{n-k})} \Vert l_\sigma\Vert _{L_1 ({\mathbb R}^{k})} \Vert P\Vert _{L_{\infty} ({\mathbb R}^n)}
\end{split}
\end{equation}
Since $\vert M({\mathbf x}, {\mathbf y})\vert  \leq \gamma$ for some $\gamma$, we have
\begin{equation}
\begin{split}
\vert P({\mathbf x})\vert  \leq 
\gamma \Vert \frac{1}{\sqrt{2\pi\sigma^2}^{n-k}} y_je^{-\frac{\vert {\mathbf y}_{k+1:n}\vert ^2}{2\sigma^2}}l_\sigma({\mathbf y}_{1:k})\Vert _{L_1 ({\mathbb R}^n)} = 
\\
\gamma \Vert \frac{1}{\sqrt{2\pi\sigma^2}^{n-k}} e^{-\frac{\vert {\mathbf y}_{k+1:n}\vert ^2}{2\sigma^2}}\Vert _{L_1 ({\mathbb R}^{n-k})} \Vert y_j l_\sigma({\mathbf y}_{1:k})\Vert _{L_1 ({\mathbb R}^k)} =  \gamma \Vert y_j l_\sigma({\mathbf y}_{1:k})\Vert _{L_1 ({\mathbb R}^k)}.
\end{split}
\end{equation}
Thus,
\begin{equation}
\begin{split}
\vert \langle x_i f_\sigma \vert  M \vert  x_j f_\sigma\rangle\vert  \leq \Vert \frac{1}{\sqrt{2\pi\sigma^2}^{n-k}} x_i e^{-\frac{\vert {\mathbf x}_{k+1:n}\vert ^2}{2\sigma^2}}\Vert _{L_1 ({\mathbb R}^{n-k})} \Vert l_\sigma\Vert _{L_1 ({\mathbb R}^k)} \gamma \Vert y_jl_\sigma\Vert _{L_1 ({\mathbb R}^k)}. 
\end{split}
\end{equation}
Using $\Vert l_\sigma\Vert _{L_1 ({\mathbb R}^k)} -\Vert l\Vert _{L_1 ({\mathbb R}^k)} \mathop\rightarrow\limits^{\sigma\rightarrow +0} 0$, $\Vert y_jl_\sigma\Vert _{L_1 ({\mathbb R}^k)} -\Vert y_j l\Vert _{L_1 ({\mathbb R}^k)} \mathop\rightarrow\limits^{\sigma\rightarrow +0} 0$, we see the boundedness of $\Vert l_\sigma\Vert _{L_1 ({\mathbb R}^k)} \gamma \Vert y_jl_\sigma\Vert _{L_1 ({\mathbb R}^k)} $ and proceed
\begin{equation} 
\leq C \Vert \frac{1}{\sqrt{2\pi\sigma^2}^{n-k}} x_i e^{-\frac{\vert {\mathbf x}_{k+1:n}\vert ^2}{2\sigma^2}}\Vert _{L_1 ({\mathbb R}^{n-k})} .
\end{equation}
It is easy to see that $\Vert \frac{1}{\sqrt{2\pi\sigma^2}^{n-k}} x_i e^{-\frac{\vert {\mathbf x}_{k+1:n}\vert ^2}{2\sigma^2}}\Vert _{L_1 ({\mathbb R}^{n-k})} \rightarrow 0$ as $\sigma\rightarrow 0$, therefore $\langle x_i f_\sigma \vert  M \vert  x_j f_\sigma\rangle \rightarrow 0$.

Similarly, we can prove that $\langle x_i f_\sigma \vert  M \vert  x_j f_\sigma\rangle \rightarrow 0$ if $i, j> k$. 

The entries of the main $k\times k$ minor $[\langle x_i f_\sigma \vert  M \vert  x_j f_\sigma\rangle]_{1\leq i,j\leq k}$ are bounded, due to
\begin{equation}
\begin{split}
{\rm Tr\,} M_{f_\sigma} = \frac{1}{(2\pi\sigma^2)^{n-k}}\iint_{{\mathbb R}^n\times {\mathbb R}^n} {\mathbf x} \cdot {\mathbf y} e^{-\frac{\vert {\mathbf x}_{k+1:n}\vert ^2}{2\sigma^2}} l_\sigma({\mathbf x}_{1:k}) M({\mathbf x}, {\mathbf y}) e^{-\frac{\vert {\mathbf y}_{k+1:n}\vert ^2}{2\sigma^2}} l_\sigma({\mathbf y}_{1:k})  d {\mathbf x} d {\mathbf y} \leq \\
\frac{\gamma }{(2\pi\sigma^2)^{n-k}}\iint_{{\mathbb R}^n\times {\mathbb R}^n} (\vert {\mathbf x}_{1:k} \cdot {\mathbf y}_{1:k}\vert  +\vert {\mathbf x}_{k+1:n} \cdot {\mathbf y}_{k+1:n}\vert ) e^{-\frac{\vert {\mathbf x}_{k+1:n}\vert ^2+\vert {\mathbf y}_{k+1:n}\vert ^2}{2\sigma^2}} l_\sigma({\mathbf x}_{1:k})   l_\sigma({\mathbf y}_{1:k})  d {\mathbf x} d {\mathbf y} \leq  \\
\gamma \iint_{{\mathbb R}^n\times {\mathbb R}^n} \vert {\mathbf x}_{1:k} \cdot {\mathbf y}_{1:k}\vert l_\sigma({\mathbf x}_{1:k})   l_\sigma({\mathbf y}_{1:k})  d {\mathbf x}_{1:k} d {\mathbf y}_{1:k} +  \gamma\Vert l_\sigma\Vert ^2_{L_1} (n-k)\sigma^2 \leq \\  
\gamma\sum_{j=1}^k\Vert y_jl_\sigma\Vert ^2_{L_1 ({\mathbb R}^k)}  + \gamma\Vert l_\sigma\Vert ^2_{L_1} (n-k)\sigma^2.
\end{split}
\end{equation}
Again, using $\Vert l_\sigma\Vert _{L_1 ({\mathbb R}^k)} -\Vert l\Vert _{L_1 ({\mathbb R}^k)} \mathop\rightarrow\limits^{\sigma\rightarrow +0} 0$, $\Vert y_jl_\sigma\Vert _{L_1 ({\mathbb R}^k)} -\Vert y_j l\Vert _{L_1 ({\mathbb R}^k)} \mathop\rightarrow\limits^{\sigma\rightarrow +0} 0$, we obtain the boundedness of RHS.
\end{proof}
\begin{corollary}\label{cor} For any $f\in \mathcal{G}_k$, $\lim_{\sigma\rightarrow 0} R(f_\sigma) = 0$.
\end{corollary}
\begin{proof}
W.l.o.g. we can assume that $f = T_l\otimes \delta^{n-k}, l\in {\mathcal S} ({\mathbb R}^k)$.
By lemma, all entries of $M_{f_\sigma}$ except those of the main $k\times k$ minor approach $0$ as $\sigma\rightarrow 0$. This means that 
$\lim_{\sigma\rightarrow +0}Q(f_\sigma) = 0 $, 
where $Q(f_\sigma) = \sum_{i=k+1}^n\langle x_i f_\sigma \vert  M \vert  x_i f_\sigma\rangle$. Let ${\mathbf v}_1, \cdots, {\mathbf v}_n$ be unit eigenvectors of $M_{f_\sigma}$ corresponding to the eigenvalues $\lambda_1\geq \cdots \geq \lambda_n$, $P = \sum_{i=k+1}^n {\mathbf e}_i{\mathbf e}^T_i$, then
\begin{equation}
\begin{split}
R(f_\sigma) = \sum_{i=k+1}^n \lambda_i = \min_{p_i\in [0,1], \sum_{1}^n p_i = n-k} \sum_{i=1}^n \lambda_i p_i \leq  
\sum_{i=1}^n \lambda_i {\rm Tr\,}(P{\mathbf v}_i {\mathbf v}_i^T P) = {\rm Tr\,}(P M_{f_\sigma} P) = Q(f_\sigma)
\end{split}
\end{equation}
Since $R(f_\sigma) \leq Q(f_\sigma)$, we obtain $\lim_{\sigma\rightarrow 0} R(f_\sigma) = 0$.
\end{proof}
\subsubsection{Proof of Theorem~\ref{metrization}}%
\begin{proof}
Suppose that a sequence $\{f_i\}^\infty_{s=1}\subseteq {\mathcal S} ({\mathbb R}^n)$ regularly solves (7) and $T\in \mathop{\rm Lim}\limits_{i\rightarrow \infty } f_i$. W.l.o.g. we can assume  that $T_{f_i}\rightarrow^\ast T$ and $\Tr(M_{f_i})$ is bounded and $I(f_i) + \lambda_i R(f_i) \leq \inf\limits_{f\in {\mathcal S} ({\mathbb R}^n)} I(f) + \lambda_i R(f) +\epsilon_i, \epsilon_i\rightarrow 0$.
Below we use continuity of $I$ and corollary~\ref{cor}:
\begin{equation}
\begin{split}
\inf\limits_{f\in {\mathcal S} ({\mathbb R}^n)} I(f) + \lambda_i R(f) \leq \inf\limits_{f\in \mathcal{G}_k}\inf_{\sigma>0} I(f_\sigma) + \lambda_i R(f_\sigma) \leq 
\inf\limits_{f\in \mathcal{G}_k}\lim_{\sigma\rightarrow +0} I(f_\sigma) + \lambda_i R(f_\sigma)\leq \inf\limits_{f\in \mathcal{G}_k} I(f)
\end{split}
\end{equation} 
from which we conclude that $\lambda_i R(f_i) \leq \inf\limits_{f\in \mathcal{G}_k} I(f) + \epsilon_i$ and, therefore, $R(f_i)\mathop\rightarrow\limits^{i\rightarrow \infty} 0$.

For each $l$, let us define $P_l$ as the projection operator to a subspace spanned by first principal components of the matrix $\sqrt{M_{f_l}}$, i.e.
\begin{equation}
P_l = \sum_{i=1}^k {\mathbf v}^l_i {\mathbf v}^l_i\rm{ }^T,
\end{equation}
where ${\mathbf v}^l_1, ..., {\mathbf v}^l_k$ are orthonormal eigenvectors that correspond to $k$ largest eigenvalues of $\sqrt{M_{f_l}}$. From the Eckart-Young-Mirsky theorem we see that $R(f_l) = \Vert \sqrt{M_{f_l}}-P_l \sqrt{M_{f_l}}\Vert ^2_F$. Since a set of all projection operators $\{P\in {\mathbb R}^{n\times n}\vert  P^2 = P, P^T = P\}$ is a compact subset of ${\mathbb R}^{n^2}$, one can always find a projection operator $P = \sum_{i=1}^k {\mathbf v}_i {\mathbf v}^T_i$ and a growing subsequence $\{l_s\}$ such that $\Vert P_{l_s}-P\Vert _F\rightarrow 0$ as $s\rightarrow\infty$. Thus, for the subsequence $\{f_{l_s}\}$ we have
\begin{equation}
\begin{split}
\Vert \sqrt{M_{f_{l_s}}} -  P\sqrt{M_{f_{l_s}}}\Vert _F =  \Vert \sqrt{M_{f_{l_s}}} - P_{l_s}\sqrt{M_{f_{l_s}}}+P_{l_s}\sqrt{M_{f_{l_s}}}- P\sqrt{M_{f_{l_s}}}\Vert _F\leq \\
\Vert \sqrt{M_{f_{l_s}}} - P_{l_s}\sqrt{M_{f_{l_s}}}\Vert _F + \Vert P_{l_s}- P\Vert _F \Vert \sqrt{M_{f_{l_s}}}\Vert _F = \sqrt{R(f_{l_s})} + \Vert P_{l_s}- P\Vert _F \sqrt{\Tr(M_{f_s})}\end{split}
\end{equation}
and using the boundedness of $\Tr(M_{f_s})$ we obtain $\Vert \sqrt{M_{f_{l_s}}} - P\sqrt{M_{f_{l_s}}}\Vert _F\rightarrow 0$.

Since $\Vert \sqrt{M_{f_{l_s}}} - P\sqrt{M_{f_{l_s}}}\Vert _F\rightarrow 0$, let us complete ${\mathbf v}_1, ..., {\mathbf v}_k$ to an orthonormal basis ${\mathbf v}_1, ..., {\mathbf v}_n$ and make the change of variables $y_i = {\mathbf v}^T_i {\mathbf x}$. Let us denote $V = \begin{bmatrix}
{\mathbf v}_1, ..., {\mathbf v}_n
\end{bmatrix}$ and let $V^T = \begin{bmatrix}
{\mathbf w}_1, ..., {\mathbf w}_n
\end{bmatrix}$. Then, after that change of variables any function $f({\mathbf x})$ corresponds to $f'({\mathbf y}) = f(V{\mathbf y})$ and the kernel $M$ corresponds to $M'({\mathbf y}, {\mathbf y}') = M(V{\mathbf y}, V{\mathbf y}')$. After we apply that change of variables in the integral expression of $\langle x_i f\vert  M \vert  x_j f\rangle$, we obtain
\begin{equation}
\begin{split}
\langle x_i f \vert  M \vert  x_j f\rangle = \langle {\mathbf w}^T_i {\mathbf y} f'\vert  M' \vert  {\mathbf w}^T_j {\mathbf y} f'\rangle =  {\mathbf w}^T_i
\begin{bmatrix}
\langle y_{i'} f' \vert  M' \vert  y_{j'} f'\rangle
\end{bmatrix}_{n\times n} {\mathbf w}_j\Rightarrow \\
{\rm Re\,}\langle x_i f \vert  M \vert  x_j f\rangle =  {\mathbf w}^T_i
\begin{bmatrix}
{\rm Re\,} \langle y_{i'} f' \vert  M' \vert  y_{j'} f'\rangle
\end{bmatrix}_{n\times n} {\mathbf w}_j.
\end{split}
\end{equation}
I.e. $M_f = V M'_{f'} V^T$, or $M'_{f'} = V^T M_f V$. Note that $P = V I_n^k V^T$ where $I^k_{n}$ is a diagonal matrix whose main $k\times k$ minor is the identity matrix, and all other entries are zeros.
Using the fact that the Frobenius norm of orthogonally similar matrices are equal and the identity $V^T \sqrt{M_{f_{l_s}}} V = \sqrt{V^T M_{f_{l_s}} V}$, we obtain
\begin{equation}
\begin{split}
\Vert \sqrt{M_{f_{l_s}}} - P\sqrt{M_{f_{l_s}}}\Vert _F =  \Vert V^T \sqrt{M_{f_{l_s}}} V - V^T P\sqrt{M_{f_{l_s}}}V\Vert _F = \\
\Vert \sqrt{V^T M_{f_{l_s}} V} - V^T V I_n^k V^T \sqrt{M_{f_{l_s}}}V\Vert _F =  \Vert \sqrt{M'_{f'_{l_s}}} - I^k_{n}\sqrt{M'_{f'_{l_s}}}\Vert _F.
\end{split}
\end{equation}
 
Thus, the property $\Vert \sqrt{M_{f_{l_s}}} - P\sqrt{M_{f_{l_s}}}\Vert _F\rightarrow 0$ implies 
\begin{equation}
{\rm Re\,}\langle y_i f'_{l_s} \vert  M' \vert  y_j f'_{l_s}\rangle \rightarrow 0,{\rm \,\, if\,\, }i>k.
\end{equation}
Moreover, for $i=j$ we have ${\rm Re\,}\langle y_i f'_{l_s} \vert  M' \vert  y_j f'_{l_s}\rangle = \langle y_i f'_{l_s} \vert  M' \vert  y_j f'_{l_s}\rangle$.
It is easy to see that after the change of variables we still have $f'_{l_s}\rightarrow^\ast T_{V}$. 
Since $f'_{l_s}\in {\mathcal S} ({\mathbb R}^n)$, we have $y_i f'_{l_s}\in {\mathcal S} ({\mathbb R}^n)$ and, therefore, $y_i f'_{l_s}\in L_2 ({\mathbb R}^n)$. Let us treat now $M'$ as an operator ${\rm O}_{M'}: L_2 ({\mathbb R}^n)\rightarrow L_2 ({\mathbb R}^n), {\rm O}_{M'}[f]({\mathbf x}) = \int_{{\mathbb R}^n}M'({\mathbf x}, {\mathbf y})f({\mathbf y}) d {\mathbf y}$. 
Let us take any function $\phi\in L_2 ({\mathbb R}^n)$ such that $\psi = {\rm O}_{M'}[\phi]\in {\mathcal S} ({\mathbb R}^n)$.
Since ${\rm O}_{M'}$ is a strictly positive self-adjoint operator, by the Cauchy-Schwarz inequality, we obtain
\begin{equation}
\vert \langle y_i f'_{l_s}, {\rm O}_{M'}[\phi]\rangle\vert  \leq \sqrt{\langle y_i f'_{l_s} \vert  M' \vert  y_i f'_{l_s}\rangle} \sqrt{\langle \phi, {\rm O}_{M'}[\phi]\rangle} .
\end{equation}
Therefore, for any $\psi\in {\rm Range\,} [{\rm O}_{M'}]\cap {\mathcal S} ({\mathbb R}^n)$ and $i>k$ we have $\lim_{s\rightarrow \infty} \langle y_i f'_{l_s}, \psi\rangle = \lim_{s\rightarrow \infty} \langle  f'_{l_s}, y_i\psi\rangle = 0$. Since $f'_{l_s}\rightarrow^\ast T_V$ we obtain $\langle   T_V, y_i\psi\rangle = \langle   y_i T_V, \psi\rangle = 0$ for any $\psi\in {\rm Range\,} [{\rm O}_{M'}]\cap {\mathcal S} ({\mathbb R}^n)$. But the denseness of ${\rm Range\,} [{\rm O}_{M'}]\cap {\mathcal S} ({\mathbb R}^n)$ in ${\mathcal S} ({\mathbb R}^n)$ implies that $y_i T_V = 0$.

Using lemma~\ref{lemma} and $(T_V)_{V^T} = T$ we obtain $T\in \mathcal{G}'_k$.
Thus, we proved that $T_{f_i}\rightarrow T\in \mathcal{G}'_k$.

Since $I(f_i)\leq I(f_i) + \lambda_i R(f_i) \leq \inf\limits_{f\in \mathcal{G}'_k} I(f) +\epsilon_i$ and $I$ is continuous, we finally get that $I(T)\leq \inf\limits_{f\in \mathcal{G}'_k} I(f) $, i.e. $T\in {\rm Arg}\min\limits_{f\in \mathcal{G}'_k} I(f)$. 
\end{proof}
\subsubsection{Proof of Theorem~$\lowercase{\ref{existence}}$}%
\begin{proof}
Suppose $f^\ast\in {\rm Arg} \min \limits_{f\in \mathcal{G}'_k} I(f)\bigcap \mathcal{B}_k$,
i.e. $f^\ast \in \mathcal{B}_k$ and $I(f^\ast) = \min \limits_{f\in \mathcal{G}'_k} I(f)$. 
Since $f^\ast \in \mathcal{B}_k$, then there exists a sequence $\{s^i\}\subseteq  \mathcal{G}_k$ such that $T_{s^i}\rightarrow^\ast f^\ast$ and $\sup\limits_{i}{\rm Tr\,}M_{s^i} < \infty$. 

Let us define $s^i_\sigma\in {\mathcal S} ({\mathbb R}^n)$ as
$
T_{s^i_\sigma} = T_{s^i}\ast G_\sigma^n$.
Since $\lim_{\sigma\rightarrow 0} R(s^i_\sigma) =0 $ (lemma~\ref{entries}), there exists $\sigma_i>0$, such that $R(s^i_\sigma) < \frac{1}{i}$ whenever $0<\sigma \leq \sigma_i$. Also, by definition ${\rm Tr\,}M_{s^i} = \lim_{\sigma\rightarrow 0}{\rm Tr\,} M_{s^i_\sigma}$. Therefore, there exists $\sigma'_i>0$, such that ${\rm Tr\,} M_{s^i_\sigma} < {\rm Tr\,}M_{s^i}+1$  whenever $0<\sigma \leq \sigma'_i$.

If we set $\sigma_i^\ast = \min\{\sigma_i, \sigma'_i, \frac{1}{i}\}$, then a sequence $\{s^i_{\sigma^\ast_i}\}\subseteq {\mathcal S} ({\mathbb R}^n)$ satisfies
\begin{equation}
\begin{split}
\lim_{i\rightarrow \infty}R(s^i_{\sigma^\ast_i}) = 0, 
\sup\limits_{i}{\rm Tr\,} M_{s^i_{\sigma^\ast_i}} < \infty
\end{split}
\end{equation}
and (using lemma~\ref{steingauz})
$
T_{s^i_{\sigma^\ast_i}} \rightarrow^\ast f^\ast$.

Due to the continuity of $I$ we have $
\lim_{i\rightarrow \infty} I(s^i_{\sigma^\ast_i}) = I(f^\ast)$. 
Now we set $f_i = s^i_{\sigma^\ast_i}$, $\lambda_i = \frac{1}{\sqrt{R(f_i)}}$ and we obtain the needed sequence:
\begin{equation}
\lim_{i\rightarrow \infty} I(f_i) = \lim_{i\rightarrow \infty} I(f_i) + \lambda_i R(f_i)  = I(f^\ast), \lim_{i\rightarrow \infty}\lambda_i= +\infty,
\end{equation}
where ${\rm Tr\,} M_{f_i}$ is bounded. It remains to check that our sequence regularly solves (7), i.e. $\lim_{i\rightarrow \infty} \inf\limits_{f\in {\mathcal S} ({\mathbb R}^n)} I(f) + \lambda_i R(f) =  I(f^\ast)$ (this will imply $\lim_{i\rightarrow \infty} I(f_i) + \lambda_i R(f_i) -\inf\limits_{f\in {\mathcal S} ({\mathbb R}^n)} I(f) + \lambda_i R(f)  = 0$). The inequality in one direction is obvious, 
\begin{equation}
\begin{split}
\inf\limits_{f\in {\mathcal S} ({\mathbb R}^n)} I(f) + \lambda_i R(f) \leq \inf\limits_{f\in \mathcal{G}_k}\inf_{\sigma>0} I(f_\sigma) + \lambda_i R(f_\sigma) \leq \\ \inf\limits_{f\in \mathcal{G}_k}\lim_{\sigma\rightarrow +0} I(f_\sigma) + \lambda_i R(f_\sigma) =
\inf\limits_{f\in \mathcal{G}_k}I(f)= I(f^\ast).
\end{split}
\end{equation} 
Let us prove the inverse inequality.

Since ${\rm rsol\,} (I(f), R(f))\ne \emptyset$, there exists $\{\tilde{f}_i\}\subseteq {\mathcal S} ({\mathbb R}^n)$ such that
\begin{equation}
\begin{split}
I(\tilde{f}_i)+\tilde{\lambda}_i R(\tilde{f}_i)\leq \inf\limits_{f\in {\mathcal S} ({\mathbb R}^n)}I(f)+\tilde{\lambda}_i R(f) +\epsilon_i, 
\lim_{s\rightarrow +\infty} \tilde{\lambda}_i = +\infty, \lim_{i\rightarrow +\infty}\epsilon_i = 0, {\rm Tr\,}M_{\tilde{f}_i} < \infty
\end{split}
\end{equation}
and $a = \lim_{i\rightarrow +\infty} T_{\tilde{f}_{i}}$. From theorem 5 we obtain $a\in {\rm Arg} \min \limits_{f\in \mathcal{G}'_k} I(f)$. 


One can always find a subset $\{\tilde{\lambda}_{d_i}\}\subseteq \{\tilde{\lambda}_{i}\}$ such that $\tilde{\lambda}_{d_i} < \lambda_{i}$, $\tilde{\lambda}_{d_i}\rightarrow \infty$ and obtain
\begin{equation}
\begin{split}
\inf\limits_{f\in {\mathcal S} ({\mathbb R}^n)} I(f)+\lambda_i R(f)\geq \inf\limits_{f\in {\mathcal S} ({\mathbb R}^n)} I(f)+\tilde{\lambda}_{d_i} R(f)\geq 
I(\tilde{f}_{d_i}) + \tilde{\lambda}_{d_i} R(\tilde{f}_{d_i})-\epsilon_{d_i} \geq I(\tilde{f}_{d_i}) -\epsilon_{d_i} .
\end{split}
\end{equation}
Therefore,
\begin{equation}
\begin{split}
\lim_{i\rightarrow \infty} \inf\limits_{f\in {\mathcal S} ({\mathbb R}^n)} I(f)+\lambda_i R(f) \geq \lim_{i\rightarrow \infty} I(\tilde{f}_{d_i}) -\epsilon_{d_i}  = I(a) = \inf\limits_{f\in \mathcal{G}'_k} I(f)  = I(f^\ast).
\end{split}
\end{equation}
This proves that $\{{f_i}\}$ regularly solves (7) and $\lim_{i\rightarrow \infty} f_i = f^\ast$ i.e.  $f^\ast\in {\rm rsol\,} (I(f), R(f))$.
\end{proof}

\section{The alternating scheme in the dual space for $M({\mathbf x}, {\mathbf y}) = \zeta({\mathbf x}-{\mathbf y})$}\label{mod-alt-right2} 
When $M({\mathbf x}, {\mathbf y}) = \zeta({\mathbf x}-{\mathbf y})$, the alternating scheme~\ref{alternate-mod} allows for a reformulation in the dual space. By this we mean that in Scheme~\ref{alternate-mod} we substitute $\hat{\phi}_{t}$ for the original $\phi_{t}$. If the primal Scheme~\ref{alternate-mod} deals with  operators $S_{\phi}, S_{\phi_{t-1}}$, the dual version deals with vectors of functions $\sqrt{{\hat\zeta}}\frac{\partial {\hat\phi}}{\partial {\mathbf x}}, \sqrt{{\hat\zeta}}\frac{\partial {\hat\phi}_{t-1}}{\partial {\mathbf x}}$.
The substitution is based on the following simple fact:
\begin{theorem} If $M({\mathbf x}, {\mathbf y}) = \zeta({\mathbf x}-{\mathbf y}), \zeta, \hat{\zeta}\in C ({\mathbb R}^n)$ and $\forall {\mathbf x}\,\, \hat{\zeta}({\mathbf x})> 0$, then there exist constants $c_1$ and $c_2$ such that $\Vert S_{\phi} - P_{t-1}S_{\phi_{t-1}}\Vert _\ast^2 = c_1 \Vert \,\,\Vert \frac{\partial{\hat {\phi}}}{\partial {\mathbf x}} - P_{t-1} \frac{\partial{\hat {\phi}}_{t-1}}{\partial {\mathbf x}}\Vert _2\,\,\Vert ^2_{L_{2,\hat\zeta}({\mathbb R}^{n})}$ and
$\langle x_i f \vert  M \vert  x_j f \rangle = c_2
\langle \frac{\partial{\hat f}}{\partial x_i}, \frac{\partial{\hat f}}{\partial x_j}\rangle_{L_{2,\hat\zeta}({\mathbb R}^{n})}$
\end{theorem}
\begin{proof}
Let $f: {\mathbb R}^n\rightarrow {\mathbb C}$ be such that $\Vert x_i f\Vert _{L_2({\mathbb R}^n)} < \infty$. 
\begin{equation}
\begin{split}
{\rm O}_M[\psi] = \zeta \ast \psi \Rightarrow \mathcal{F}\left\{ {\rm O}_M[\psi]\right\} \propto {\hat\zeta} {\hat\psi} \Rightarrow 
\mathcal{F}\left\{\sqrt{{\rm O}_M}[\psi]\right\} \propto \sqrt{\hat\zeta} {\hat\psi} \Rightarrow \\
S_{f}[\psi]_i = {\rm Re\,} \langle x_i f, \sqrt{{\rm O}_M}[\psi]\rangle\propto 
{\rm Re\,} \langle \mathcal{F}\left\{x_i f\right\}, \mathcal{F}\left\{\sqrt{{\rm O}_M}[\psi]\right\}\rangle \propto 
{\rm Re\,} \langle {\rm i}\frac{\partial{\hat f}}{\partial x_i}, \sqrt{\hat\zeta} {\hat\psi} \rangle = {\rm Re\,} \langle {\rm i} \sqrt{\hat\zeta} \frac{\partial{\hat f}}{\partial x_i}, {\hat\psi} \rangle
\end{split}
\end{equation}
Since $S_f[\psi]_i =  {\rm Re\,}\langle (S_f)_{i}, \psi\rangle\propto  {\rm Re\,}\langle \widehat{(S_f)_{i}}, \hat{\psi}\rangle$, we obtain 
\begin{equation}\label{kappa}
\widehat{(S_f)_{i}} = \kappa\sqrt{\hat\zeta} \frac{\partial{\hat f}}{\partial x_i}
\end{equation} 
where $\kappa$ is a constant. 

Let us now introduce a vector of functions $V_{f} = \begin{bmatrix}
(S_f)_{1}, \cdots, (S_f)_{n}
\end{bmatrix}^T\in L^n_2({\mathbb R}^{n})$. 
Using~\ref{kappa} we obtain $\widehat{(S_f)_{i}} = \kappa\sqrt{\hat\zeta} \frac{\partial{\hat f}}{\partial x_i}$, and therefore $
{\widehat V_f} = \kappa\sqrt{\hat\zeta} \frac{\partial{\hat f}}{\partial {\mathbf x}}$. 
Thus, the expression $\Vert S_{\phi} - P_{t-1}S_{\phi_{t-1}}\Vert _\ast^2$ in the alternating scheme can be rewritten as
\begin{equation}
\begin{split}
\Vert V_{\phi} - P_{t-1}V_{\phi_{t-1}}\Vert ^2_{L^n_2({\mathbb R}^{n})} \propto 
\Vert \kappa\sqrt{\hat\zeta} \frac{\partial{\hat {\phi}}}{\partial {\mathbf x}} - P_{t-1} \kappa\sqrt{\hat\zeta} \frac{\partial{\hat {\phi}}_{t-1}}{\partial {\mathbf x}} \Vert ^2_{L^n_2({\mathbb R}^{n})} \propto 
\Vert \,\,\Vert \frac{\partial{\hat {\phi}}}{\partial {\mathbf x}} - P_{t-1} \frac{\partial{\hat {\phi}}_{t-1}}{\partial {\mathbf x}}\Vert _2\,\,\Vert ^2_{L_{2,\hat\zeta}({\mathbb R}^{n})}
\end{split}
\end{equation}
The matrix $M_f$ can also be calculated from $\hat{f}$ using the following identity:
\begin{equation}
\begin{split}
\langle x_i f, M[x_j f]\rangle = \langle x_i f, \zeta\ast (x_j f)\rangle \propto 
\langle \frac{\partial{\hat f}}{\partial x_i}, \hat{\zeta}\frac{\partial{\hat f}}{\partial x_j}\rangle = 
\langle \frac{\partial{\hat f}}{\partial x_i}, \frac{\partial{\hat f}}{\partial x_j}\rangle_{L_{2,\hat\zeta}({\mathbb R}^{n})}
\end{split}
\end{equation}
\end{proof}

Let us introduce a function $\tilde{I}$ such that $\tilde{I}(f) = I(\hat{f})$.
Then, we see that all steps in Scheme~\ref{alternate-mod} can be performed with ${\hat \phi_t}$ rather than with $\phi_t$, using the algorithm~\ref{alternate-mod2}.

Informally, the dual algorithm works as follows: at each iteration $t$ we compute a function ${\hat \phi}_{t}$ adapting it to data (the term $\tilde{I}({\hat \phi})$) and adapting its gradient field to the rank reduced gradient field of the previous ${\hat \phi}_{t-1}$. For a sufficiently large $T$, it will converge and ${\hat \phi}_{T}\approx {\hat \phi}_{T-1}$. Then, the second term in the last step  will be approximately equal to $\lambda  \Vert \,\,\Vert \frac{\partial{\hat \phi}_T}{\partial {\mathbf x}} - P_{T-1}\frac{\partial{\hat \phi}_{T}}{\partial {\mathbf x}}\Vert _2\,\,\Vert ^2_{L_{2,\hat\zeta}({\mathbb R}^{n})}$, enforcing $\frac{\partial{\hat \phi}_T}{\partial {\mathbf x}} \approx P_{T-1}\frac{\partial{\hat \phi}_{T}}{\partial {\mathbf x}}$ for random ${\mathbf x}\sim \frac{\hat\zeta}{\Vert \hat\zeta\Vert _{L_1}}$.
Thus, gradients $\frac{\partial{\hat \phi}_T}{\partial {\mathbf x}}$ lie in a $k$-dimensional subspace ${\rm col\,} P_{T-1}$. This last property is a characteristic property of functions from ${\mathcal F}_k$.
\begin{algorithm}
\begin{algorithmic}
\caption{The alternating scheme in the dual space.}\label{alternate-mod2}
\State $P_0 \longleftarrow  {\mathbf 0}, {\hat \phi}_0 \longleftarrow  {\mathbf 0}$
\For{$t = 1, \cdots, T$}
\State ${\hat \phi}_{t}\leftarrow  \arg\min\limits_{{\hat \phi}} \tilde{I}({\hat \phi})+\tilde{\lambda}  \Vert \,\,\Vert \frac{\partial{\hat \phi}}{\partial {\mathbf x}} - P_{t-1}\frac{\partial{\hat \phi}_{t-1}}{\partial {\mathbf x}}\Vert _2\,\,\Vert ^2_{L_{2,\hat\zeta}({\mathbb R}^{n})} $ 
\State Calculate $M_{t} = \begin{bmatrix} {\rm Re\,}
\langle \frac{\partial{\hat \phi_t}}{\partial x_i}, \frac{\partial{\hat \phi_t}}{\partial x_j}\rangle_{L_{2,\hat\zeta}({\mathbb R}^{n})}
\end{bmatrix}$
\State Find $\{{\mathbf v}_i\}^n_1$ s.t. $M_{t}{\mathbf v}_i=\lambda_i{\mathbf v}_i$, $\lambda_1\geq \cdots\geq \lambda_n$
\State $P_{t} \longleftarrow  \sum_{i=1}^k {\mathbf v}_i {\mathbf v}_i^T$
\EndFor
\State \textbf{Output:} ${\mathbf v}_1, \cdots, {\mathbf v}_k$
\end{algorithmic}
\end{algorithm}

Absolutely analogously to the Algorithm~\ref{alternate-mod2}, one can construct a dual algorithm that deals with inverse Fourier transforms of functions, i.e. with $\mathcal{F}^{-1}[\phi]$, $\mathcal{F}^{-1}[\phi_t]$, $M_{t} = \begin{bmatrix} {\rm Re\,}
\langle \frac{\partial{\mathcal{F}^{-1}[\phi_t]}}{\partial x_i}, \frac{\partial{\mathcal{F}^{-1}[\phi_t]}}{\partial x_j}\rangle_{L_{2,\mathcal{F}^{-1}[\zeta]}({\mathbb R}^{n})}
\end{bmatrix}$ etc. This version of the dual alternating scheme will be used for designing numerical algorithms for the Gaussian MMD-PCA and HM-MMD-PCA.

\section{Proofs for Section~\ref{Approximate-MMD-PCA}}
\begin{proof}[Proof of Theorem~\ref{polynomial}.] Let $X = [{\mathbf x}_1, \cdots, {\mathbf x}_N]$. Note that $H_f = X S X^T$ where $S = [P({\mathbf x}_i \cdot {\mathbf x}_j)]_{i,j\in [N]}$. For any $U\in \mathcal{O}(n)$ and we have 
\begin{equation}
H_{f_U} = (U^T X)  [P(U^T {\mathbf x}_i \cdot U^T {\mathbf x}_j)]_{i,j\in [N]} (U^T X)^T  = U^T H_f U.
\end{equation}
Let $U = [{\mathbf u}_1, \cdots, {\mathbf u}_n]$ where $\{{\mathbf u}_i\}_{i=1}^n$ are eigenvectors  such that $H_f{\mathbf u}_i=\lambda_i{\mathbf u}_i$. Then, the rotated distribution $f_U$ is such that $H_{f_U} $ is diagonal. Note that $\inf_{\nu\in \mathcal{P}_k}\|f_U-T_\nu\|_K = \inf_{\nu\in \mathcal{P}_k}\|f-T_\nu\|_K$.

Therefore, w.l.o.g. we can assume that principal components of $H_f$ are ${\mathbf e}_1, \cdots, {\mathbf e}_n$ and $H_f{\mathbf e}_i=\lambda_i {\mathbf e}_i, i\in [n]$, where $\{{\mathbf e}_i\}_{i=1}^n$ is a canonical basis in ${\mathbb R}^n$ and $\lambda_1\geq \lambda_2\geq \cdots\geq \lambda_n$ are eigenvalues of $H_f$. From the latter we conclude that $\langle x_i f| H|x_j f\rangle = \lambda_i\delta_{ij}$. Using $P({\mathbf x}\cdot {\mathbf y}) = \sum_{j=0}^{l-1}c_j\sum_{\alpha\in ({\mathbb N}\cup\{0\})^n: |\alpha|=j}\frac{j!}{\alpha!}{\mathbf x}^\alpha{\mathbf y}^\alpha$, we obtain
\begin{equation}
\langle x_i f| H|x_i f\rangle =\sum_{j=0}^{l-1}c_j \sum_{\alpha\in ({\mathbb N}\cup\{0\})^n: |\alpha|=j}\frac{j!}{\alpha!} ({\mathbb E}_{{\mathbf x}\sim f}x_i {\mathbf x}^\alpha)^2=\lambda_i.
\end{equation}

For an input distribution $f({\mathbf x}) = \frac{1}{N}\sum_{i=1}^N \delta^n({\mathbf x}-{\mathbf x}_i)$, let us denote $f'({\mathbf x}) = \frac{1}{N}\sum_{i=1}^N \delta^k({\mathbf x}_{1:k}-({\mathbf x}_i)_{1:k})\otimes \delta^{n-k}({\mathbf x}_{k+1:n})$, where ${\mathbf x} = [{\mathbf x}_{1:k}, {\mathbf x}_{k+1:n}]$ and ${\mathbf z}_{1:k}\in {\mathbb R}^k$ equals the first $k$ components of ${\mathbf z}\in {\mathbb R}^n$. By construction,
\begin{equation}
{\mathbb E}_{{\mathbf x}\sim f}x_{i_1}\cdots x_{i_s} = {\mathbb E}_{{\mathbf x}\sim f'}x_{i_1}\cdots x_{i_s}
\end{equation}
for $i_1,\cdots, i_s \in [k]$ and
\begin{equation}
 {\mathbb E}_{{\mathbf x}\sim f'}x_{i_1}\cdots x_{i_s}=0
\end{equation}
whenever $i_j\in [n]\setminus [k]$ for at least one $j\in [s]$. Therefore,
\begin{equation}
\begin{split}
\|f-f'\|^2_K = \sum_{i=1}^n   \sum_{j=0}^{l-1}c_j \sum_{\alpha\in ({\mathbb N}\cup\{0\})^n: |\alpha|=j}\frac{j!}{\alpha!}({\mathbb E}_{{\mathbf x}\sim f}x_i{\mathbf x}^\alpha-{\mathbb E}_{{\mathbf x}\sim f'}x_i{\mathbf x}^\alpha)^2 = \\
\sum_{i=1}^k \sum_{j=0}^{l-1}c_j \sum_{\alpha\in ({\mathbb N}\cup\{0\})^n: |\alpha|=j, |\alpha_{k+1:n}|\ne 0}\frac{j!}{\alpha!}({\mathbb E}_{{\mathbf x}\sim f}x_i{\mathbf x}^\alpha)^2 +\\
\sum_{i=k+1}^n \sum_{j=0}^{l-1}c_j \sum_{\alpha\in ({\mathbb N}\cup\{0\})^n: |\alpha|=j}\frac{j!}{\alpha!}({\mathbb E}_{{\mathbf x}\sim f}x_i{\mathbf x}^\alpha)^2 = F_1+F_2
\end{split}
\end{equation}
The second sum $F_2$ equals $\sum_{i=k+1}^n \lambda_i$. 
Let us compare the first sum, $F_1$, with the second, $F_2$.
$F_1$ is a sum of positive factors of $ ({\mathbb E}_{{\mathbf x}\sim f}{\mathbf x}^\alpha)^2$ where $\alpha_{1:k}\ne 0, \alpha_{k+1:n}\ne 0$. Let $e_i\in ({\mathbb N}\cup\{0\})^n$ be an $i$th canonical unit vector and $\alpha_i$ denote an $i$th component of $\alpha$. The coefficient in front of $({\mathbb E}_{{\mathbf x}\sim f}{\mathbf x}^\alpha)^2$ in $F_1$ equals $A_\alpha = c_{|\alpha|-1}(|\alpha|-1)!\sum_{i\in [k]:\alpha_i>0}\frac{1}{(\alpha-e_i)!}=\frac{c_{|\alpha|-1}(|\alpha|-1)!}{\alpha!}|\alpha_{1:k}|$ and the coefficient in front of $({\mathbb E}_{{\mathbf x}\sim f}{\mathbf x}^\alpha)^2$ in $F_2$ equals $B_\alpha = c_{|\alpha|-1}(|\alpha|-1)!\sum_{i\in [n]\setminus [k]: \alpha_i>0}\frac{1}{(\alpha-e_i)!} = \frac{c_{|\alpha|-1}(|\alpha|-1)!}{\alpha!}|\alpha_{k+1:n}|$. Since $|\alpha_{k+1:n}|\geq 1$ and $|\alpha_{1:k}|\leq l-1$, we conclude that $A_\alpha\leq (l-1)B_\alpha$. Therefore, $F_1\leq (l-1)F_2$.

Thus, overall we have
\begin{equation}
\|f-f'\|^2_K \leq l \sum_{i=k+1}^n \lambda_i.
\end{equation}
From $\|T_\mu -T_\nu\|^2_K = d_{\rm MMD}(\mu, \nu)^2$ the statement of theorem directly follows. 
\end{proof}

In our proof of Theorem~\ref{Gaussian-MMD}. we will need the following classical theorem.
\begin{theorem}[The Gaussian Poincaré  inequality] Let $g:{\mathbb R}^n\to {\mathbb R}$ be a smooth function, then
\begin{equation}
{\rm Var}_{{\mathbf x}\sim \mathcal{N}({\mathbf 0}, \sigma^2 I_n)} [g({\mathbf x})]\leq \sigma^2 {\mathbb E}_{{\mathbf x}\sim \mathcal{N}({\mathbf 0}, \sigma^2 I_n)} \|\nabla g({\mathbf x})\|^2.
\end{equation}
\end{theorem}
\begin{proof}[Proof of Theorem~\ref{Gaussian-MMD}.]
Let us assume w.l.o.g. that principal components of $H_f$ are ${\mathbf e}_1, \cdots, {\mathbf e}_n$ and $H_f{\mathbf e}_i=\lambda_i {\mathbf e}_i, i\in [n]$, where $\{{\mathbf e}_i\}_{i=1}^n$ is a canonical basis in ${\mathbb R}^n$ and $\lambda_1\geq \lambda_2\geq \cdots\geq \lambda_n$ are eigenvalues of $H_f$. From the latter we conclude that $\langle x_i f| H |x_j f\rangle = \lambda_i\delta_{ij}$.

 Note that $H({\mathbf x}, {\mathbf y}) = \mathcal{F}^{-1}[G_\sigma]({\mathbf x}-{\mathbf y})$.
Let $\hat{f} = \mathcal{F}[f]$, $$\hat{f'}({\mathbf x}) = {\mathbb E}_{{\mathbf y}_{1:n-k}\sim \mathcal{N}({\mathbf 0},\sigma^2 I_{n-k})}   \hat{f}({\mathbf x}_{1:k}, {\mathbf y}_{1:n-k})$$ and $f' = \mathcal{F}^{-1}[\hat{f'}]$ where ${\mathbf x} = [{\mathbf x}_{1:k}, {\mathbf x}_{k+1:n}]$. From the isometry of the Fourier transform, we have
\begin{equation}
\begin{split}
\|f-f'\|_K^2 = C_n {\mathbb E}_{{\mathbf x}\sim \mathcal{N}({\mathbf 0},\sigma^2 I_n)} \|\nabla_{{\mathbf x}} \hat{f}({\mathbf x}) -\nabla_{{\mathbf x}} \hat{f'}({\mathbf x})\|^2.
\end{split}
\end{equation}
The latter expression decomposes into two terms. The first is
\begin{equation}
\begin{split}
{\mathbb E}_{{\mathbf x}\sim \mathcal{N}({\mathbf 0},\sigma^2 I_n)} \|\nabla_{{\mathbf x}_{k+1:n}} \hat{f}({\mathbf x}) -\nabla_{{\mathbf x}_{k+1:n}} \hat{f'}({\mathbf x})\|^2 = {\mathbb E}_{{\mathbf x}\sim \mathcal{N}({\mathbf 0},\sigma^2 I_n)} \|\nabla_{{\mathbf x}_{k+1:n}} \hat{f}({\mathbf x})\|^2 = \\
{\mathbb E}_{{\mathbf x}\sim \mathcal{N}({\mathbf 0},\sigma^2 I_n)} \sum_{i=k+1}^n (\frac{\partial \hat{f}({\mathbf x})}{\partial x_i})^2 = \frac{1}{C_n}\sum_{i=k+1}^n \lambda_i\\
\end{split}
\end{equation}
The second is
\begin{equation}
\begin{split}
{\mathbb E}_{{\mathbf x}\sim \mathcal{N}({\mathbf 0},\sigma^2 I_{n})}  \|\nabla_{{\mathbf x}_{1:k}} \hat{f}({\mathbf x}_{1:k}, {\mathbf x}_{k+1:n}) -
 {\mathbb E}_{{\mathbf y}_{1:n-k}\sim \mathcal{N}({\mathbf 0},I_{n-k})}  \nabla_{{\mathbf x}_{1:k}} \hat{f}({\mathbf x}_{1:k}, {\mathbf y}_{1:n-k})\|^2 = \\
 {\mathbb E}_{{\mathbf x}_{1:k}\sim \mathcal{N}({\mathbf 0},\sigma^2 I_{k})} \sum_{i=1}^k {\rm Var}_{{\mathbf x}_{k+1:n}\sim \mathcal{N}({\mathbf 0},\sigma^2 I_{n-k})} [\frac{\partial \hat{f}({\mathbf x}_{1:k}, {\mathbf x}_{k+1:n})}{\partial x_i} ].
\end{split}
\end{equation}
The latter can be bounded using the Gaussian Poincaré inequality by
\begin{equation}
\begin{split}
\sigma^2 {\mathbb E}_{{\mathbf x}\sim \mathcal{N}({\mathbf 0},\sigma^2 I_{n})}   \sum_{i=1}^k\sum_{j=k+1}^n |\frac{\partial^2 \hat{f}({\mathbf x}_{1:k}, {\mathbf x}_{k+1:n})}{\partial x_i\partial x_j}|^2 .
\end{split}
\end{equation}
After changing an order of summations one can bound the internal sum using integration by parts, i.e.
\begin{equation}
\begin{split}
\sum_{i=1}^k\int_{{\mathbb R}^n} |\frac{\partial^2 \hat{f}({\mathbf x})}{\partial x_i\partial x_j}|^2 G_\sigma({\mathbf x}) d{\mathbf x} = \sum_{i=1}^k -\int_{{\mathbb R}^n} \frac{\partial \hat{f}({\mathbf x})^\ast}{\partial x_j} \frac{\partial}{\partial x_i}(\frac{\partial^2 \hat{f}({\mathbf x})}{\partial x_i\partial x_j} G_\sigma({\mathbf x})) d{\mathbf x} =\\ 
\sum_{i=1}^k-\int_{{\mathbb R}^n} \frac{\partial \hat{f}({\mathbf x})^\ast}{\partial x_j} (\frac{\partial^3 \hat{f}({\mathbf x})}{\partial x^2_i\partial x_j} - \frac{\partial^2 \hat{f}({\mathbf x})}{\partial x_i\partial x_j}\frac{x_i}{\sigma^2}) G_\sigma({\mathbf x}) d{\mathbf x} = \\
\int_{{\mathbb R}^n}\frac{\partial \hat{f}({\mathbf x})^\ast}{\partial x_j} (\frac{\partial \Delta_{1:k} \hat{f}({\mathbf x})}{\partial x_j} -\frac{1}{\sigma^2} \frac{\partial ({\mathbf x}_{1:k}\cdot \nabla_{1:k} \hat{f}({\mathbf x}))}{\partial x_j}) G_\sigma({\mathbf x}) d{\mathbf x}
\end{split}
\end{equation}
Then, using the Cauchy–Schwarz inequality we bound the latter by
\begin{equation}
\begin{split}
\big(\int_{{\mathbb R}^n}|\frac{\partial \hat{f}({\mathbf x})}{\partial x_j}|^2 G_\sigma({\mathbf x}) d{\mathbf x}\big)^{1/2} \big(\int_{{\mathbb R}^n} |\frac{\partial \Delta_{1:k} \hat{f}({\mathbf x})}{\partial x_j} -\frac{1}{\sigma^2} \frac{\partial ({\mathbf x}_{1:k}\cdot \nabla_{1:k} \hat{f}({\mathbf x}))}{\partial x_j}|^2 G_\sigma({\mathbf x}) d{\mathbf x}\big)^{1/2}
\end{split}
\end{equation}
The first term equals $(\frac{1}{C_n}\lambda_j)^{1/2}$ and the second term, after making the inverse Fourier transform, is bounded by
\begin{equation}
\begin{split}
C_n^{-1/2}  (\langle x_j\|{\mathbf x}_{1:k}\|^2 f | H |x_j\|{\mathbf x}_{1:k}\|^2 f\rangle)^{1/2}+ 
C_n^{-1/2}\sum_{i=1}^k   (\langle x_i x_j f | T_i ({\mathbf x}- {\mathbf y}) | y_i y_j f\rangle)^{1/2}
\end{split}
\end{equation}
where $T_i=\mathcal{F}^{-1}[\frac{x_i^2 G_\sigma}{\sigma^2}] = e^{-\sigma^2\|{\mathbf x}\|^2/2}(1-\sigma^2 x_i^2)$. Since $T_i({\mathbf x}- {\mathbf y}) = H({\mathbf x},{\mathbf y})(1-\sigma^2 x_i^2-\sigma^2 y_i^2+2\sigma^2 x_iy_i)$, we have $\langle x_i x_j f | T_i ({\mathbf x}- {\mathbf y}) | y_i y_j f\rangle = \langle x_i x_j f | H | x_i x_j f\rangle-2\sigma^2 \langle x^3_i x_j f | H | x_i x_j f\rangle+2\sigma^2 \langle x^2_i x_j f | H | x^2_i x_j f\rangle$. After noting that 
\begin{equation}
|\langle {\mathbf x}^\alpha f | H | {\mathbf x}^\beta f\rangle|\leq \int|{\mathbf x}^{\alpha}f({\mathbf x})|d{\mathbf x}\cdot \int|{\mathbf x}^{\beta}f({\mathbf x})|d{\mathbf x},
\end{equation}
 we finally obtain
\begin{equation}
\begin{split}
\|f-f'\|_K^2 \leq 
\sum_{j=k+1}^n \lambda_j^{1/2} \sum_{i=1}^k\big( \int|x_ix_jf({\mathbf x})|d{\mathbf x}+\\
\sqrt{2}\sigma\sqrt{\int |x^3_ix_jf({\mathbf x})|d{\mathbf x} \int |x_ix_jf({\mathbf x})|d{\mathbf x}}+
\sqrt{2}\sigma \int|x^2_ix_jf({\mathbf x})|d{\mathbf x}\big)\leq 
M\sum_{j=k+1}^n \lambda_j^{1/2}
\end{split}
\end{equation}
where $M = \mathcal{O}(\int_{{\mathbb R}^n} \|{\mathbf x}\|^2|f({\mathbf x})|d{\mathbf x}+\sqrt{2}\sigma \sqrt{\int_{{\mathbb R}^n} \|{\mathbf x}\|^4|f({\mathbf x})|d{\mathbf x} \int_{{\mathbb R}^n} \|{\mathbf x}\|^2|f({\mathbf x})|d{\mathbf x}}+\sqrt{2}\sigma \int_{{\mathbb R}^n} \|{\mathbf x}\|^3|f({\mathbf x})|d{\mathbf x})$.

By construction, $f'\in\mathcal{G}_k'$ and it can be approached by elements of $\mathcal{G}_k$ w.r.t. norm $\|\cdot\|_K$. The statement of theorem directly follows from this observation.
\end{proof}

\section{A numerical alternating scheme for the Gaussian MMD-PCA}\label{numerical-mmd}

\subsection{Structure of $\mathcal{F}^{-1}[\mathcal{P}_k]$}
From theorems~\ref{easy} and~\ref{g-dual-f}, $\mathcal{F}^{-1}[\mathcal{P}_k]\subseteq \overline{\mathcal{F}_k}^\ast$.
In fact, Bochner's theorem~\cite{bochner1932vorlesungen} gives us that the inverse Fourier transform of any positive finite Borel measure is a continuous positive definite function. That is, if $f\in \mathcal{F}^{-1}[\mathcal{P}]$, then for any distinct ${\mathbf y}_1, \cdots, {\mathbf y}_s\in {\mathbb R}^n$ the matrix
$[f({\mathbf y}_i-{\mathbf y}_j)]_{i,j=\overline{1,n}}$ is positive semidefinite. Since $\mu({\mathbb R}^n)=1$, we additionally have $f({\mathbf 0})=1$.
Let ${\rm PDF}$ denote the set of all continuous positive definite functions on ${\mathbb R}^n$ and
\begin{equation}\label{bochner-dual}
\begin{split}
\mathcal{M}_k = \{f\in {\rm PDF}\vert  \exists{\mathbf v}_1, ..., {\mathbf v}_k\in {\mathbb R}^n, g: {\mathbb R}^k\rightarrow {\mathbb C} \,\,{\rm s.t.}\,\,
f({\mathbf x})  = g({\mathbf v}^T_1{\mathbf x}, ..., {\mathbf v}^T_k{\mathbf x}), f({\mathbf 0})=1\}.
\end{split}
\end{equation}
Thus, the following characterization of $\mathcal{F}^{-1}[\mathcal{P}_k]$ becomes evident.
\begin{theorem}\label{inclusion} $\mathcal{F}^{-1}[\mathcal{P}_k]=\mathcal{M}_k$.
\end{theorem}
\subsection{The dual form of the Gaussian MMD-PCA} 
Recall that $k({\mathbf x})= G_h^n({\mathbf x})$. Let us define another Gaussian kernel $\gamma({\mathbf x}) =  e^{-\frac{h^2\vert {\mathbf x}\vert ^2}{2}} = \mathcal{F}^{-1}[k]$. 
Let $p_{\rm{data}}({\mathbf x})$ denote the characteristic function of the random vector ${\mathbf X}_{\rm{data}}\sim\mu_{\rm{data}}$. By definition, $p_{\rm{data}}({\mathbf x}) = {\mathbb E}[e^{{\rm i}{\mathbf X}_{\rm{data}}^T{\mathbf x}}] = \frac{1}{N}\sum_{i=1}^N e^{{\rm i}{\mathbf x}^T_i{\mathbf x}}$.
Thus, $p_{\rm{data}} = \mathcal{F}^{-1}[\mu_{\rm{data}}]$ and $\mu_{\rm{data}} = \mathcal{F}[p_{\rm{data}}]$.

Using the isometry property of the inverse Fourier transform for $L_2({\mathbb R}^n)$ and the convolution theorem, we see that
\begin{equation}
\begin{split}
d_{\textsc{MMD}}(\mu, \nu) = \Vert k \ast \mu -k\ast \nu\Vert _{L_2({\mathbb R}^n)}
\propto
 \Vert \gamma({\mathbf x}) (\mathcal{F}^{-1}[\mu]({\mathbf x}) - \mathcal{F}^{-1}[\nu]({\mathbf x}))\Vert _{L_{2}({\mathbb R}^n)}.
\end{split}
\end{equation}
Thus, from Theorem~\ref{inclusion} we obtain that the task~\ref{MMD-task} is equivalent to
\begin{equation}\label{dual}
\Vert p_{\rm{data}} - q\Vert _{L_{2, \gamma^2}({\mathbb R}^n)}  \rightarrow \min_{q\in \mathcal{M}_k}
\end{equation}
where $L_{2, \gamma^2}({\mathbb R}^n)\eqdef L_{2, \nu}({\mathbb R}^n)$ with $d\nu=\gamma^2d{\mathbf x}$. 
\subsection{Algorithms for the Gaussian MMD-PCA}
Let $\Pi_k: {\mathcal{G}_k}\rightarrow \{1,+\infty\}$ and ${\text M}_k: {\mathcal{F}_k}\rightarrow \{1,+\infty\}$ be simple penalty functions:
$$
\Pi_k (\phi) = 1, \text{if}\,\,\phi\in \mathcal{P}_k\,\,\text{and}\,\,\Pi_k (\phi) = \infty, \text{otherwise}
$$
$$
{\text M}_k (\phi) = 1, \text{if}\,\,\phi\in \mathcal{M}_k\,\,\text{and}\,\,{\text M}_k (\phi) = \infty, \text{otherwise}
$$
Then, the task~\ref{MMD-task} is equivalent to
\begin{equation}
I(\phi) =  d^2_{\rm MMD}(\mu_{\rm data}, \phi) \Pi_k (\phi)\rightarrow\inf_{\phi\in {\mathcal{G}_k}}.
\end{equation}
From the result of the previous section we see that if $I(\phi) = \tilde{I}({\hat \phi})$, then
\begin{equation}
\tilde{I}({\hat \phi}) = \Vert p_{\rm{data}} - {\hat \phi}\Vert ^2_{L_{2, \gamma^2}({\mathbb R}^n)}{\text M}_k ({\hat \phi}).
\end{equation}
Thus, the Algorithm~\ref{alternate-mod3} is an adaptation of Algorithm~\ref{alternate-mod2} to MMD-PCA.

\begin{algorithm}
\begin{algorithmic}
\caption{The alternating scheme in the dual space for the Gaussian MMD-PCA}\label{alternate-mod3}
\State $P_0 \longleftarrow  {\mathbf 0}, q_0 \longleftarrow  {\mathbf 0}$
\For{$t = 1, \cdots, T$}
\State{1} $q_{t}\longleftarrow  \arg\min\limits_{q\in \mathcal{M}_k} \int_{{\mathbb R}^n}\gamma({\mathbf x})^2 \vert p_{\rm{data}}({\mathbf x})-q({\mathbf x})\vert ^2 d {\mathbf x} +\lambda  \int_{{\mathbb R}^{n}} {\hat \zeta}({\mathbf x}) \Vert \frac{\partial q}{\partial {\mathbf x}} - P_{t-1}\frac{\partial q_{t-1}}{\partial {\mathbf x}}\Vert _2^2 d {\mathbf x}$
\State{2} Calculate $M_{t} = \begin{bmatrix}
\langle \frac{\partial q_t}{\partial x_i}, \frac{\partial q_t}{\partial x_j}\rangle_{L_{2,{\hat \zeta}}({\mathbb R}^{n})}
\end{bmatrix}$
\State{3} Find $\{{\mathbf v}_i\}^n_1$ s.t. $M_{t}{\mathbf v}_i=\lambda_i{\mathbf v}_i$, $\lambda_1\geq \cdots\geq \lambda_n$
\State{4} $P_{t} \longleftarrow  \sum_{i=1}^k {\mathbf v}_i {\mathbf v}_i^T$
\EndFor
\State \textbf{Output:} $\mathcal{L} = {\rm span}({\mathbf v}_1, \cdots, {\mathbf v}_k)$
\end{algorithmic}
\end{algorithm}

If the function  $p_{\rm{data}}$ is real-valued, then only real-valued functions can appear in the Algorithm~\ref{alternate-mod3}. This assumption can be satisfied by adding reflections of initial points to the dataset (after it was centered).

At step 1, we search over $q$ given in the following parameterized form:
\begin{equation}\label{Bochner}
q_\theta({\mathbf x}) = \sum_{i=1}^{{\rm nn}} \alpha_i cos(\omega_i^T {\mathbf x})
\end{equation}
where $\alpha_i>0$ and $\sum_{i=1}^{{\rm nn}} \alpha_i=1$. In our implementation, we set $[\alpha_i]_{i=\overline{1, {\rm nn}}} = {\rm softmax} ([u_i]_{i=\overline{1, {\rm nn}}})$ and $u_i$'s are unconstrained. The number of neurons in a single layer neural network with a cosine activation function, ${\rm nn}$, is a hyperparameter. Let us denote parameters $\{\omega_i, u_i\}_{i=1}^{\rm nn}$ by $\theta$. It is easy to see the function $q_\theta$ is positive definite. Moreover, using Theorem 2 from~\cite{Barron}, it can be shown that a set of all such functions, i.e. the convex hull of $\{cos(\omega^T {\mathbf x})\vert  \omega\in {\mathbb R}^{n}\}$, is dense in a set of real-valued functions from $\mathcal{M}_k$. Though this parameterization is quite natural, finding architectures with more expressive power in a  space of real-valued positive definite functions is an open problem.

Now, to minimize 
\begin{equation}
\begin{split}
\Psi(\theta) = \int_{{\mathbb R}^n}\gamma({\mathbf x})^2 \vert p_{\rm{data}}({\mathbf x})-q_\theta({\mathbf x})\vert ^2 d {\mathbf x} + 
\lambda  \int_{{\mathbb R}^{n}} {\hat \zeta}({\mathbf x}) \Vert \frac{\partial q_\theta}{\partial {\mathbf x}} - P_{t-1}\frac{\partial q_{\theta_{t-1}}}{\partial {\mathbf x}}\Vert _2^2 d {\mathbf x}
\end{split}
\end{equation}
with stochastic gradient descent methods (in our case, the Adam optimizer) we need to have an unbiased estimator of
\begin{equation}
\begin{split}
\nabla_\theta \Psi(\theta) \propto {\mathbb E}_{{\mathbf z}\sim\gamma^2} \nabla_\theta \vert p_{\rm{data}}({\mathbf z})-q_\theta({\mathbf z})\vert ^2 + 
{\tilde\lambda}  {\mathbb E}_{{\mathbf z}'\sim{\hat \zeta}} \nabla_\theta \Vert \frac{\partial q_\theta}{\partial {\mathbf x}}({\mathbf z}') - P_{t-1}\frac{\partial q_{\theta_{t-1}}}{\partial {\mathbf x}}({\mathbf z}')\Vert _2^2
\end{split}
\end{equation}
where ${\mathbf z} \sim f$ denotes that the random vector ${\mathbf z}$ is sampled according to the probability density function $\frac{f ({\mathbf x})}{\int_{{\mathbb R}^{n}} f ({\mathbf x}) d {\mathbf x}}$.
Thus, a natural estimator of the gradient is
\begin{equation}
\begin{split}
\frac{1}{m}\sum_{i=1}^m \nabla_\theta \vert p_{\rm{data}} ({\mathbf z_i}) - q_\theta ({\mathbf z_i})\vert ^2 + 
\frac{{\tilde\lambda}}{m}\sum_{i=1}^{m} \nabla_\theta\Vert \frac{\partial q_\theta (\text{\boldmath$\xi$}_i))}{\partial {\mathbf x}} - P_{t-1} \frac{\partial q_{\theta_{t-1}} (\text{\boldmath$\xi$}_i))}{\partial {\mathbf x}}\Vert _2^2
\end{split}
\end{equation}
where $\{{\mathbf z}_i\}_{i=1}^m\sim^{iid} \gamma^2$ and $\{\text{\boldmath$\xi$}_i\}_{i=1}^{m}\sim^{iid} {\hat \zeta}$.

The last important issue with the practical numerical algorithm is the calculation of $M_t$ at step 2. By construction,
\begin{equation}
M_t = {\mathbb E}_{{\boldsymbol \chi} \sim {\hat \zeta}}\frac{\partial q_t}{\partial {\mathbf x}} ({\boldsymbol \chi}) \frac{\partial q_t}{\partial {\mathbf x}} ({\boldsymbol \chi})^T.
\end{equation}
In practice we sample ${\boldsymbol \chi}_1, \cdots, {\boldsymbol \chi}_l \sim {\hat \zeta}$ and estimate $M_t$ as 
\begin{equation}
M_t \approx \frac{1}{l}\sum_{i=1}^l \frac{\partial q_t}{\partial {\mathbf x}} ({\boldsymbol \chi}_i) \frac{\partial q_t}{\partial {\mathbf x}} ({\boldsymbol \chi}_i)^T.
\end{equation}

The details of the numerical algorithm~\ref{numerical-dual-mmd} are given below. In all our experiments with MMD-PCA we set ${\hat \zeta} = \gamma^2$.

\begin{algorithm}
\begin{algorithmic}
\caption{The numerical algorithm for the Gaussian MMD-PCA. Hyperparameters:  ${\tilde\lambda}, h, \sigma, m, l, \alpha, \beta_1, \beta_2, {\rm nn}$.}\label{numerical-dual-mmd}
\State $P_0 \longleftarrow  {\mathbf 0}, \theta_0 \longleftarrow  {\mathbf 0}$

\For{$t = 1, \cdots, T$}

\While{$\theta$ has not converged}
\State Sample $\{{\mathbf z}_i\}_{i=1}^m \sim^{iid} \gamma^2$

\State Sample $\{\text{\boldmath$\xi$}_i\}_{i=1}^{m} \sim^{iid} {\hat \zeta}$

\State $L\longleftarrow \frac{1}{m}\sum_{i=1}^m \vert p_{\rm{data}} ({\mathbf z_i}) - q_\theta ({\mathbf z_i})\vert ^2 + \frac{{\tilde\lambda}}{m}\sum_{i=1}^{m} \Vert \frac{\partial q_\theta (\text{\boldmath$\xi$}_i))}{\partial {\mathbf x}} - P_{t-1} \frac{\partial q_{\theta_{t-1}} (\text{\boldmath$\xi$}_i))}{\partial {\mathbf x}}\Vert _2^2$

\State $\theta \longleftarrow {\rm Adam} (\nabla_{\theta} L, \theta, \alpha, \beta_1, \beta_2)$
\EndWhile
\State $\theta_t \longleftarrow \theta$

\State Sample $\{\text{\boldmath$\chi$}_i\}_{i=1}^{l} \sim^{iid} {\hat \zeta}$

\State Calculate $M_{t} = \frac{1}{l}\sum_{i=1}^l \frac{\partial q_{\theta_{t}} (\text{\boldmath$\chi$}_i))}{\partial {\mathbf x}} \frac{\partial q_{\theta_{t}} (\text{\boldmath$\chi$}_i))}{\partial {\mathbf x}}^T$

\State Find $\{{\mathbf v}_i\}^n_1$ s.t. $M_{t}{\mathbf v}_i=\lambda_i{\mathbf v}_i$, $\lambda_1\geq \cdots\geq \lambda_n$

\State $P_{t} \longleftarrow  \sum_{i=1}^k {\mathbf v}_i {\mathbf v}_i^T$
\EndFor

\State \textbf{Output:} ${\mathbf v}_1, \cdots, {\mathbf v}_k$
\end{algorithmic}
\end{algorithm}

\section{A numerical alternating scheme for HM-MMD-PCA}\label{numerical-hm}
\subsection{The dual form of HM-MMD-PCA} 
Due to a well-known relationship between moments of the probability measure $\mu$ and its characteristic function $p$, i.e. ${\rm i}^s m_{i_1 \cdots i_s} = \frac{\partial^s p ({\mathbf 0})}{\partial x_{i_1}\cdots \partial x_{i_s}}$, the task~\eqref{HM-task} is equivalent to
\begin{equation}\label{HM-task-dual}
\sum_{s=1}^4\frac{\lambda_s}{n^s}\sum_{1\leq i_1, \cdots, i_s\leq n}\vert \frac{\partial^s p_{\rm data}({\mathbf 0})}{\partial x_{i_1}\cdots \partial x_{i_s}}-\frac{\partial^s q({\mathbf 0})}{\partial x_{i_1}\cdots \partial x_{i_s}}\vert ^2 \rightarrow \min_{q\in \mathcal{M}_k}.
\end{equation}
Note that the maximum mean discrepancy distance for the Gaussian kernel and the distance based on higher moments are substantially different. Indeed, even if we set $h$ as a large value (which makes $\frac{1}{h}\approx 0$), the MMD distance, unlike the HM distance, neglects higher order derivatives of the characteristic functions in the neigbourhood of the origin. Moreover, from the dual form~\eqref{HM-task-dual} it is clear that $d_{\rm HM}(\mu_{\rm data}, \nu)$ is a degenerate case of a weighted Sobolev norm between characteristic functions of $\mu_{\rm data}$ and $\nu$. 

\subsection{Algorithms for HM-MMD-PCA}
Analogously to the case of MMD-PCA we see that the task~\eqref{HM-task} is equivalent to:
\begin{equation}
I(\phi) = d_{\rm HM}(\mu_{\rm data}, \phi)^2\Pi_k (\phi) \rightarrow\inf_{\phi\in {\mathcal{G}_k}}
\end{equation}
and
\begin{equation}
\begin{split}
\tilde{I}({\hat \phi}) = 
\sum_{s=1}^4\frac{\lambda_s}{n^s}\sum_{1\leq i_1, \cdots, i_s\leq n}\vert \frac{\partial^s p_{\rm data}({\mathbf 0})}{\partial x_{i_1}\cdots \partial x_{i_s}}-\frac{\partial^s {\hat \phi}({\mathbf 0})}{\partial x_{i_1}\cdots \partial x_{i_s}}\vert ^2 {\text M}_k ({\hat \phi}). 
\end{split}
\end{equation}
Thus, the Algorithm~\ref{hm-alternate-mod} is an adaptation of Algorithm~\ref{alternate-mod2} to HM-MMD-PCA.

\begin{algorithm}
\begin{algorithmic}
\caption{The alternating scheme in the dual space for HM-MMD-PCA}\label{hm-alternate-mod}
\State $P_0 \longleftarrow  {\mathbf 0}, q_0 \longleftarrow  {\mathbf 0}$
\For{$t = 1, \cdots, T$}
\State{1} $q_{t}\longleftarrow  \arg\min\limits_{q\in \mathcal{M}_k} \sum_{s=1}^4\frac{\lambda_s}{n^s}\sum_{1\leq i_1, \cdots, i_s\leq n}\vert \frac{\partial^s p_{\rm data}({\mathbf 0})}{\partial x_{i_1}\cdots \partial x_{i_s}}-\frac{\partial^s q({\mathbf 0})}{\partial x_{i_1}\cdots \partial x_{i_s}}\vert ^2 +\lambda  \int_{{\mathbb R}^{n}} {\hat \zeta}({\mathbf x}) \Vert \frac{\partial q}{\partial {\mathbf x}} - P_{t-1}\frac{\partial q_{t-1}}{\partial {\mathbf x}}\Vert _2^2 d {\mathbf x}$
\State{2} Calculate $M_{t} = \begin{bmatrix}
\langle \frac{\partial q_t}{\partial x_i}, \frac{\partial q_t}{\partial x_j}\rangle_{L_{2,{\hat \zeta}}({\mathbb R}^{n})}
\end{bmatrix}$
\State{3} Find $\{{\mathbf v}_i\}^n_1$ s.t. $M_{t}{\mathbf v}_i=\lambda_i{\mathbf v}_i$, $\lambda_1\geq \cdots\geq \lambda_n$
\State{4} $P_{t} \longleftarrow  \sum_{i=1}^k {\mathbf v}_i {\mathbf v}_i^T$
\EndFor
\State \textbf{Output:} $\mathcal{L} = {\rm span}({\mathbf v}_1, \cdots, {\mathbf v}_k)$
\end{algorithmic}
\end{algorithm}

Again, as in a numerical algorithm for MMD-PCA, at step 1, we search over $q$ given in the form~\eqref{Bochner}. The objective of step 1 can be represented as
\begin{equation}
\begin{split}
\Phi(\theta) = \sum_{s=1}^4\lambda_s {\mathbb E}_{i_1, \cdots, i_s\sim^{iid} {\mathcal U}(1,n) }\vert \frac{\partial^s (p_{\rm data}-q_\theta)({\mathbf 0})}{\partial x_{i_1}\cdots \partial x_{i_s}}\vert ^2 + 
{\tilde\lambda}  {\mathbb E}_{{\mathbf z}'\sim{\hat \zeta}} \Vert \frac{\partial q_\theta}{\partial {\mathbf x}}({\mathbf z}') - P_{t-1}\frac{\partial q_{\theta_{t-1}}}{\partial {\mathbf x}}({\mathbf z}')\Vert _2^2
\end{split}
\end{equation}
where ${\mathcal U}(1,n)$ is the discrete uniform distribution over $\{1, \cdots, n\}$.
To apply the stochastic gradient descent methods we need to have an unbiased estimator of $\nabla_\theta \Phi(\theta)$ which is equal to
\begin{equation}
\begin{split}
\sum_{s=1}^4\lambda_s {\mathbb E}_{i_1, \cdots, i_s\sim^{iid} {\mathcal U}(1,n) } \nabla_\theta \vert \frac{\partial^s (p_{\rm data}-q_\theta)({\mathbf 0})}{\partial x_{i_1}\cdots \partial x_{i_s}}\vert ^2 +
{\tilde\lambda}  {\mathbb E}_{{\mathbf z}'\sim{\hat \zeta}} \nabla_\theta \Vert \frac{\partial q_\theta}{\partial {\mathbf x}}({\mathbf z}') - P_{t-1}\frac{\partial q_{\theta_{t-1}}}{\partial {\mathbf x}}({\mathbf z}')\Vert _2^2.
\end{split}
\end{equation}
Thus, a natural estimator of the gradient is:
\begin{equation}
\begin{split}
\sum_{s=1}^4\frac{\lambda_s}{m_1}\sum_{i=1}^{m_1} \nabla_\theta \vert \frac{\partial^s (p_{\rm data}-q_\theta)({\mathbf 0})}{\partial x_{a[s,i,1]}\partial x_{a[s,i,2]}\cdots \partial x_{a[s,i,s]}}\vert ^2 + 
\frac{{\tilde\lambda}}{m_2}\sum_{i=1}^{m_2} \nabla_\theta\Vert \frac{\partial q_\theta (\text{\boldmath$\xi$}_i))}{\partial {\mathbf x}} - P_{t-1} \frac{\partial q_{\theta_{t-1}} (\text{\boldmath$\xi$}_i))}{\partial {\mathbf x}}\Vert _2^2
\end{split}
\end{equation}
where $\{a[s,i,j]\}_{s=\overline{1,4}, i=\overline{1,m_1}, j=\overline{1,s}}\sim^{iid} {\mathcal U}(1,n)$ and $\{\text{\boldmath$\xi$}_i\}_{i=1}^{m_2}\sim^{iid} {\hat \zeta}$. Overall, we obtain the following Algorithm~\ref{hm-alternate-dual}.

\begin{algorithm}
\begin{algorithmic}
\caption{The numerical algorithm for HM-MMD-PCA. Hyperparameters:  ${\tilde\lambda}, \{\lambda_s\}_{s=\overline{1,4}}, m_1, m_2, l, \alpha, \beta_1, \beta_2, {\rm nn}$.}\label{hm-alternate-dual}
\State $P_0 \longleftarrow  {\mathbf 0}, \theta_0 \longleftarrow  {\mathbf 0}$

\For{$t = 1, \cdots, T$}

\While{$\theta$ has not converged}
\State Sample $\{a[s,i,j]\}_{s=\overline{1,4}, i=\overline{1,m_1}, j=\overline{1,s}}\sim^{iid} {\mathcal U}(1,n)$

\State Sample $\{\text{\boldmath$\xi$}_i\}_{i=1}^{m_2} \sim^{iid} {\hat \zeta}$

\State $L\longleftarrow \sum\limits_{s=1}^4\frac{\lambda_s}{m_1}\sum\limits_{i=1}^{m_1} \nabla_\theta \vert \frac{\partial^s (p_{\rm data}-q_\theta)({\mathbf 0})}{\partial x_{a[s,i,1]}\partial x_{a[s,i,2]}\cdots \partial x_{a[s,i,s]}}\vert ^2 + 
\frac{{\tilde\lambda}}{m_2}\sum\limits_{i=1}^{m_2} \nabla_\theta\Vert \frac{\partial q_\theta (\text{\boldmath$\xi$}_i))}{\partial {\mathbf x}} - P_{t-1} \frac{\partial q_{\theta_{t-1}} (\text{\boldmath$\xi$}_i))}{\partial {\mathbf x}}\Vert _2^2$

\State $\theta \longleftarrow {\rm Adam} (\nabla_{\theta} L, \theta, \alpha, \beta_1, \beta_2)$
\EndWhile
\State $\theta_t \longleftarrow \theta$

\State Sample $\{\text{\boldmath$\chi$}_i\}_{i=1}^{l} \sim^{iid} {\hat \zeta}$

\State Calculate $M_{t} = \frac{1}{l}\sum_{i=1}^l \frac{\partial q_{\theta_{t}} (\text{\boldmath$\chi$}_i))}{\partial {\mathbf x}} \frac{\partial q_{\theta_{t}} (\text{\boldmath$\chi$}_i))}{\partial {\mathbf x}}^T$

\State Find $\{{\mathbf v}_i\}^n_1$ s.t. $M_{t}{\mathbf v}_i=\lambda_i{\mathbf v}_i$, $\lambda_1\geq \cdots\geq \lambda_n$

\State $P_{t} \longleftarrow  \sum_{i=1}^k {\mathbf v}_i {\mathbf v}_i^T$
\EndFor

\State \textbf{Output:} ${\mathbf v}_1, \cdots, {\mathbf v}_k$
\end{algorithmic}
\end{algorithm}
\section{A numerical alternating scheme for WD-PCA}\label{numerical-wd}
Let us consider the case $p=1$ and denote $W(\mu, \nu) = W_1(\mu, \nu)$. By Theorem~\ref{transport}, the task~\ref{robust-pca} is equivalent to $\min_{\mu\in {\mathcal P}_k} W (\mu, \mu_{\rm{data}})$, or to the following task:
\begin{equation}
I(\phi)\rightarrow\inf_{\phi\in {\mathcal{G}_k}}
\end{equation}
where $I(T_\mu) = W (\mu, \mu_{\rm{data}})$ if $\mu\in {\mathcal{P}_k}$ and $I(\phi)=\infty$, if otherwise.
The alternating scheme~\ref{alternate-mod} is designed to solve the penalty form of the problem, i.e.
\begin{equation}
I(\phi) + \lambda R(\phi) \rightarrow \min_{\phi\in {\mathcal S}({\mathbb R}^n)},
\end{equation}
which is equivalent to
\begin{equation}
W(\phi, \mu_{\rm{data}}) + \lambda R(\phi) \rightarrow \min_{\phi\in {\mathcal S}_p({\mathbb R}^n)},
\end{equation}
where ${\mathcal S}_p({\mathbb R}^n)\subseteq {\mathcal S}({\mathbb R}^n)$ is a set of Schwartz functions that can serve as pdf: $\phi({\mathbf x})\geq 0$, $\int_{{\mathbb R}^n} \phi({\mathbf x}) d{\mathbf x} = 1$.
A numerical version of the alternating scheme requires additional specifications on a) how to minimize over $\phi$ at step 1, and b) how to estimate $M_{\phi_t}$.

\subsection{How to minimize over $\phi$?}
In the case of WD-PCA, the minimization step of the alternating scheme makes the following:
\begin{equation}\label{star}
\phi_{t}\longleftarrow  \arg\min\limits_{\phi\in {\mathcal S}_p({\mathbb R}^n)} W (\phi, \mu_{\rm{data}})+\lambda \Vert S_\phi - P_{t-1}S_{\phi_{t-1}}\Vert ^2
\end{equation}
where $S_f = \sqrt{{\rm O}_M}[{\mathbf x} f({\mathbf x})]$.

For a numerical implementation of that step we need to choose some family of functions that is dense in ${\mathcal S}_p({\mathbb R}^n)$ (or, rich enough to approach the solution $\mu^\ast$). Following the tradition of GAN research let us assume that the family is given in the following form\footnote{If $\mathcal{H}\subseteq {\mathcal S}({\mathbb R}^n)$ is not satisfied, then we can choose $\mathcal{H}_\epsilon = \left\{\phi_\theta\ast G_\epsilon^n\vert  \theta\in \Theta\right\}$ for a very small $\epsilon$.}:
\begin{equation}
\begin{split}
\mathcal{H} = \big\{\phi_\theta\vert  \phi_\theta ({\mathbf x}) {\rm \,\, is\,\,pdf\,\,of\,\,random\,\,vector\,\,}g_\theta ({\mathbf z}),  {\mathbf z}\sim p({\mathbf z}), \theta\in \Theta\big\}
\end{split}
\end{equation}
where $\{g_\theta\vert  \theta\in \Theta\}$ is a parameterized family of smooth functions (usually, a neural network) and $p({\mathbf z})$ is some fixed distribution (usually, the Gaussian distribution). Following~\cite{pmlr-v70-arjovsky17a}, we make the assumption~\ref{assumption1}. In a numerical algorithm we need an access to a procedure that samples according to $\phi_{\theta} ({\mathbf x})$, not the function itself.

\begin{assumption}\label{assumption1}
$\Vert g_{\theta'}({\mathbf z}')-g_\theta({\mathbf z})\Vert \leq L(\theta, {\mathbf z})(\Vert \theta'-\theta\Vert +\Vert {\mathbf z}'-{\mathbf z}\Vert )$ where 
\begin{equation}
{\mathbb E}_{{\mathbf z}\sim p({\mathbf z})} L(\theta, {\mathbf z}) < +\infty.
\end{equation}
\end{assumption} 

Thus, instead of solving~\ref{star} we solve:
\begin{equation}
\phi_{t}\longleftarrow  \arg\min\limits_{\phi\in {\mathcal H}} W (\phi, \mu_{\rm{data}})+\lambda \Vert S_\phi - P_{t-1}S_{\phi_{t-1}}\Vert ^2,
\end{equation}
taking into account that $\phi_{t-1}\in {\mathcal H}$.

The Kantorovich-Rubinstein duality theorem gives us that:
\begin{equation}
W (\phi_\theta, \mu_{\rm{data}}) = \max_{f: \Vert f_{{\mathbf x}}\Vert \leq 1} {\mathbb E}_{{\mathbf x}\sim \mu_{\rm{data}}} [f ({\mathbf x})] - {\mathbb E}_{{\mathbf z}\sim p({\mathbf z})} [f (g_\theta ({\mathbf z}))],
\end{equation}
which turns~\ref{star} into the following minimax task:
\begin{equation}\label{minimax-numerical}
\begin{split}
\phi_{t}\longleftarrow  \arg\min\limits_{\phi\in {\mathcal H}} \max\limits_{f: \Vert f_{{\mathbf x}}\Vert \leq 1} {\mathbb E}_{{\mathbf x}\sim \mu_{\rm{data}}} [f ({\mathbf x})] -  {\mathbb E}_{{\mathbf z}\sim p({\mathbf z})} [f (g_\theta ({\mathbf z}))]+ \lambda \Vert S_\phi - P_{t-1}S_{\phi_{t-1}}\Vert ^2.
\end{split}
\end{equation}
In practice, we choose a family of functions $\mathcal{L} = \{f_w\vert  w\in \mathcal{W}\}$ and internal maximization is made over $w\in \mathcal{W}$ with an additional penalty term that penalizes a violation of the Lipschitz condition: $\forall {\mathbf x}: \Vert f_{{\mathbf x}}\Vert \leq 1$.

A family of minimax algorithms for the minimization of $W(\phi_{\theta}, \mu_{{\rm emp}})$ was developed in a series of papers~\cite{pmlr-v70-arjovsky17a,NIPS2017_7159,wei2018improving}. 
The standard minimax scheme that gained popularity in GAN literature iterates two steps: a) $n_{{\rm iter}}$ times make a gradient ascent over $w\in \mathcal{W}$, b) make a gradient descent over $\theta$.  
The task~\ref{minimax-numerical} can be viewed as a Wasserstein GAN with an additional regularization term $\lambda T(\theta)$ where $T(\theta) = \Vert S_{\phi_{\theta}} - P_{t-1}S_{\phi_{\theta_{t-1}}}\Vert ^2$.
To adapt these algorithms to the minimization of our function, we only need to have an unbiased estimator of the gradient
$\frac{\partial T}{\partial \theta}$. This estimator is needed for the generator to make its gradient descent step.
The discriminator's part of the algorithm (in which we maximize over Lipschitz functions $f_w$) can be set in a standard fashion --- we choose~\cite{petzka2018on}'s version, in which the term $\max\{0, \Vert \frac{\partial f_w}{\partial {\mathbf x}}(\xi {\mathbf x} + (1-\xi)g_{\theta} ({\mathbf z}))\Vert -1\}^2$ enforces Lipschitz condition (see step (*) of the Algorithm~\ref{minimax}).


\begin{algorithm}
\begin{algorithmic}
\caption{Numerical algorithm for WD-PCA. We use $M ({\mathbf x}, {\mathbf y}) = e^{-\frac{\Vert {\mathbf x} - {\mathbf y}\Vert ^2}{n}}$ and default values of $\lambda = 10, \Lambda = 100, n_{\rm{critic}} = 5, m = 40, l = 10000n, \alpha = 0.00001, \beta_1 = 0.5, \beta_2 = 0.9$}\label{minimax}

\State $P_0 \longleftarrow  {\mathbf 0}, \theta_0 \longleftarrow  {\mathbf 0}$

\For{$t = 1, \cdots, T$}

\State{Minimax realization of $\min\limits_{\theta} W(\phi_{\theta}, \mu_{{\rm emp}}) + \lambda T(\theta)$ {\bf (*)}:}

\While{$\theta$ has not converged}

\For{$s = 1, ..., n_{\rm{critic}}$}







\State Discriminator updates $w$

\EndFor

\State Sample $\{{\mathbf z}_i\}^m_{i=1}, \{{\mathbf z}'_i\}^m_{i=1}\sim p({\mathbf z})$


\State $L\longleftarrow  -\frac{1}{m} \sum_{i=1}^m f_w (g_{\theta} ({\mathbf z}_i))+\lambda \frac{\sum_{i,j}\Xi (\theta, {\mathbf z}_i, {\mathbf z}'_j)}{m^2}$ ($\Xi$ is defined in~\eqref{bigxi})

\State $\theta \leftarrow {\rm Adam} (\nabla_{\theta} L, \theta, \alpha, \beta_1, \beta_2)$

\EndWhile

\State $\theta_{t}\longleftarrow  \theta$

\State{Realization of step {\bf (**)}:}

\State Sample $\{{\mathbf z}_i\}^l_{i=1}$, $\{{\mathbf z}'_i\}^l_{i=1} \sim p({\mathbf z})$

\State $M_{t} \longleftarrow \sum_{ij} g_{\theta_{t}} ({\mathbf z}_i)g_{\theta_{t}} ({\mathbf z}'_j)^T M( g_{\theta_{t}} ({\mathbf z}_i), g_{\theta_{t}} ({\mathbf z}'_j))$

\State Find $\{{\mathbf v}_i\}^n_1$ s.t. $M_{t}{\mathbf v}_i=\lambda_i{\mathbf v}_i$, $\lambda_1\geq \cdots\geq \lambda_n$

\State $P_{t} \longleftarrow  \sum_{i=1}^k {\mathbf v}_i {\mathbf v}_i^T$

\EndFor

\State \textbf{Output:} ${\mathbf v}_1, \cdots, {\mathbf v}_k$
\end{algorithmic}
\end{algorithm}

\subsection{How to estimate $\frac{\partial T}{\partial \theta}$ and $M_{\phi_{\theta_t}}$?}
Another important aspect of the numerical algorithm is the complexity of estimating the matrix $M_{\phi_{\theta_t}}$ at step (**).  The following theorem shows that we only need to sample ${\mathbf z}\sim p$ a sufficient number of times to estimate $\frac{\partial T}{\partial \theta}$ and $M_{\phi_{\theta_t}}$.

\begin{theorem}\label{addition} 
If $\phi_\theta$ is pdf of the random vector $g_{\theta}({\mathbf z})$, ${\mathbf z}\sim p({\mathbf z})$, then
\begin{equation}
\begin{split}
\frac{\partial T}{\partial \theta} = {\mathbb E}_{{\mathbf z},{\mathbf z}'\sim p} \frac{\partial \Xi(\theta, {\mathbf z}, {\mathbf z}')}{\partial \theta}, \\
M_{\phi_{\theta}} = {\mathbb E}_{{\mathbf z},{\mathbf z}'\sim p} g_{\theta}({\mathbf z}) g_{\theta}({\mathbf z}')^T M(g_{\theta}({\mathbf z}), g_{\theta}({\mathbf z}'))
\end{split}
\end{equation} 
where 
\begin{equation}\label{bigxi}
\begin{split}
\Xi(\theta, {\mathbf z}, {\mathbf z}') = (g_{\theta}({\mathbf z})\cdot g_{\theta}({\mathbf z}')) M(g_{\theta}({\mathbf z}), g_{\theta}({\mathbf z}')) - 
2 (g_{\theta}({\mathbf z})\cdot P_{t-1} g_{\theta_{t-1}}({\mathbf z}')) M(g_{\theta}({\mathbf z}), g_{\theta_{t-1}}({\mathbf z}'))
\end{split}
\end{equation} 
and RHS is well-defined.
\end{theorem}

To prove the theorem we need the following lemma first.
\begin{lemma}\label{addition2}
$\Vert S_\phi - PS_{\psi}\Vert ^2 = {\mathbb E}_{{\mathbf x},{\mathbf y}\sim \phi} ({\mathbf x}\cdot {\mathbf y}) M({\mathbf x}, {\mathbf y}) + {\mathbb E}_{{\mathbf x},{\mathbf y}\sim \psi} ({\mathbf x}\cdot P{\mathbf y}) M({\mathbf x}, {\mathbf y}) - 2{\mathbb E}_{{\mathbf x}\sim \phi,{\mathbf y}\sim \psi} ({\mathbf x}\cdot P{\mathbf y}) M({\mathbf x}, {\mathbf y})$.
\end{lemma}

\begin{proof}[Proof of lemma]
\begin{equation}
\begin{split}
\Vert S_\phi - PS_{\psi}\Vert ^2 =\Vert \sqrt{{\rm O}_M}[{\mathbf x} \phi({\mathbf x})] - P\sqrt{{\rm O}_M}[{\mathbf x} \psi({\mathbf x})]\Vert ^2 = \\
\Vert \sqrt{{\rm O}_M}[{\mathbf x} \phi({\mathbf x}) - P {\mathbf x} \psi({\mathbf x})]\Vert ^2 =  \sum_{i=1}^n \Vert \sqrt{{\rm O}_M}[x_i \phi({\mathbf x}) - (P {\mathbf x})_i \psi({\mathbf x})]\Vert ^2 = \\
\sum_{i=1}^n \langle x_i \phi({\mathbf x})\vert  {\rm O}_M[x_i \phi({\mathbf x})]\rangle +  \langle (P {\mathbf x})_i \psi({\mathbf x})\vert  {\rm O}_M[(P {\mathbf x})_i \psi({\mathbf x})]\rangle - 
2 \langle (P {\mathbf x})_i \psi({\mathbf x})\vert  {\rm O}_M[x_i \phi({\mathbf x})]\rangle 
= \\ {\mathbb E}_{{\mathbf x},{\mathbf y}\sim \phi} ({\mathbf x}\cdot {\mathbf y}) M({\mathbf x}, {\mathbf y}) +   {\mathbb E}_{{\mathbf x},{\mathbf y}\sim \psi} ({\mathbf x}\cdot P{\mathbf y}) M({\mathbf x}, {\mathbf y}) -   2{\mathbb E}_{{\mathbf x}\sim \phi,{\mathbf y}\sim \psi} ({\mathbf x}\cdot P{\mathbf y}) M({\mathbf x}, {\mathbf y}).
\end{split}
\end{equation}
\end{proof}

\begin{proof}[Proof of Theorem~\ref{addition}]
Using lemma~\ref{addition2} we have
\begin{equation}
\begin{split}
T(\theta) = {\mathbb E}_{{\mathbf x},{\mathbf y}\sim \phi_\theta} ({\mathbf x}\cdot {\mathbf y}) M({\mathbf x}, {\mathbf y}) +  {\mathbb E}_{{\mathbf x},{\mathbf y}\sim \phi_{\theta_{t-1}}} ({\mathbf x}\cdot P_{t-1}{\mathbf y}) M({\mathbf x}, {\mathbf y}) - \\
2{\mathbb E}_{{\mathbf x}\sim \phi_{\theta},{\mathbf y}\sim \phi_{\theta_{t-1}}} ({\mathbf x}\cdot P_{t-1}{\mathbf y}) M({\mathbf x}, {\mathbf y}) =  {\mathbb E}_{{\mathbf z},{\mathbf z}'\sim p} (g_\theta ({\mathbf z})\cdot g_\theta ({\mathbf z}')) M(g_\theta ({\mathbf z}), g_\theta ({\mathbf z}')) + \\
{\mathbb E}_{{\mathbf z},{\mathbf z}'\sim p} (g_{\theta_{t-1}}({\mathbf z})\cdot P_{t-1}g_{\theta_{t-1}}({\mathbf z}')) M(g_{\theta_{t-1}}({\mathbf z}), g_{\theta_{t-1}}({\mathbf z}')) - \\ 2{\mathbb E}_{{\mathbf z},{\mathbf z}'\sim p} (g_{\theta}({\mathbf z})\cdot P_{t-1}g_{\theta_{t-1}}({\mathbf z}')) M(g_{\theta}({\mathbf z}), g_{\theta_{t-1}}({\mathbf z}')).
\end{split}
\end{equation}
The second term does not depend on $\theta$. Therefore,
\begin{equation}
\frac{\partial T}{\partial \theta} = \frac{\partial}{\partial \theta} {\mathbb E}_{{\mathbf z},{\mathbf z}'\sim p}  \Xi(\theta, {\mathbf z}, {\mathbf z}'),
\end{equation}
where
\begin{equation}
\begin{split}
\Xi (\theta, {\mathbf z}, {\mathbf z}') = (g_\theta ({\mathbf z})\cdot g_\theta ({\mathbf z}')) M(g_\theta ({\mathbf z}), g_\theta ({\mathbf z}')) - 2 (g_{\theta}({\mathbf z})\cdot P_{t-1}g_{\theta_{t-1}}({\mathbf z}')) M(g_{\theta}({\mathbf z}), g_{\theta_{t-1}}({\mathbf z}')).
\end{split}
\end{equation}

If ${\mathbb E}_{{\mathbf z},{\mathbf z}'\sim p}  \frac{\partial \Xi(\theta, {\mathbf z}, {\mathbf z}')}{\partial \theta}$ is well-defined (the proof of sufficiency of that condition is similar to the proof of Theorem 3 from~\cite{pmlr-v70-arjovsky17a}), then, using Leibniz integral rule, we obtain
\begin{equation}
\frac{\partial}{\partial \theta} {\mathbb E}_{{\mathbf z},{\mathbf z}'\sim p}  \Xi (\theta, {\mathbf z}, {\mathbf z}') =  {\mathbb E}_{{\mathbf z},{\mathbf z}'\sim p}  \frac{\partial \Xi(\theta, {\mathbf z}, {\mathbf z}')}{\partial \theta}.
\end{equation}

The fact that
\begin{equation}
M_{\phi_{\theta}} = {\mathbb E}_{{\mathbf z},{\mathbf z}'\sim p} g_{\theta}({\mathbf z}) g_{\theta}({\mathbf z}')^T M(g_{\theta}({\mathbf z}), g_{\theta}({\mathbf z}'))
\end{equation}
is obvious from the definition $M_{\phi_{\theta}} = {\mathbb E}_{{\mathbf x},{\mathbf y}\sim \phi_\theta } {\mathbf x} {\mathbf y}^T M({\mathbf x},{\mathbf y})$.
\end{proof}


\subsubsection{Definition of ${\mathcal H}$}
Specifically, for robust PCA/outlier pursuit applications, we define $\phi_{\theta} ({\mathbf x})$ as a probability density function   of the random vector ${\mathbf a} + {\mathbf b}$, where ${\mathbf a}$, ${\mathbf b}$ are independent and ${\mathbf a}$ is the $i$-th column of matrix $\theta_1\in {\mathbb R}^{n\times N}$ (where $i\sim {\mathcal U}(1,N)$ is sampled uniformly from $\{1, \cdots, N\}$), 
${\mathbf b} = g_{\theta_2} ({\mathbf c})$, ${\mathbf c}\sim {\mathcal N} ({\mathbf 0}, I_n)$ and $g_{\theta_2}: {\mathbb R}^n \rightarrow {\mathbb R}^n$ is a neural network with weights $\theta_2$. Thus, $\theta = (\theta_1, \theta_2)$. 
It can be checked that ${\mathcal H}$, defined in this way, satisfies the Assumption~\ref{assumption1}. We specifically introduce the random vector ${\mathbf a}$ here because, according to Theorem~\ref{transport}, the ultimate solution of the problem corresponds to $\theta_1 = Y$ and ${\mathbf b} = {\mathbf 0}$. This guarantees that the solution is approachable from set ${\mathcal H}$.

\section{A numerical alternating scheme for SDR-ORF}\label{numerical-sdr}
For a binary classification case, given a labeled dataset $\{({\mathbf x}_i, y_i)\}_{i=1}^N$, ${\mathbf x}_i\in {\mathbb R}^n, y_i \in {\mathcal C}$, ${\mathcal C} = \{0, 1\}$ we formulate the sufficient dimension reduction problem as the minimization task 
\begin{equation}
J(f) = {\mathbb E}_{({\mathbf z}, c)\sim \mu_{\rm{data}}, \text{\boldmath$\epsilon$} \sim N({\mathbf 0}, \upsilon^2 I_n)} L(c, f({\mathbf z}+\text{\boldmath$\epsilon$})) \rightarrow \min\limits_{f\in \mathcal{F}_k},
\end{equation}
where $L(c, y) = -c \log(y) - (1-c)\log (1-y)$. 

We apply the alternating scheme in the dual space (Algorithm~\ref{alternate-mod2}) to this task. We set $M({\mathbf x},{\mathbf y}) = \zeta ({\mathbf x} - {\mathbf y})$, where $\hat\zeta$ is a strictly positive probability density function. A numerical version of the scheme is given below (Algorithm~\ref{below}). 

At every iteration $t=1, \cdots, T$ of the Algorithm~\ref{alternate-mod2} we solve the task (in our case $\tilde{I} = J$)
\begin{equation}{\hat \phi}_{t}\leftarrow  \arg\min\limits_{{\hat \phi}} \tilde{I}({\hat \phi})+\tilde{\lambda}  \Vert \,\,\Vert \frac{\partial{\hat \phi}}{\partial {\mathbf x}} - P_{t-1}\frac{\partial{\hat \phi}_{t-1}}{\partial {\mathbf x}}\Vert_2 \,\,\Vert ^2_{L_{2,\hat\zeta}({\mathbb R}^{n})}.
\end{equation}
In a numerical version of the algorithm we assume that ${\hat \phi}$ is given as a neural network $f_\theta$, i.e. our task becomes
\begin{equation}
\theta_{t}\leftarrow  \arg\min\limits_{\theta} J (f_\theta)+\tilde{\lambda}  {\mathbb E}_{\text{\boldmath$\xi$}\sim \hat\zeta}\Vert \frac{\partial f_\theta}{\partial {\mathbf x}} (\text{\boldmath$\xi$}) - P_{t-1}\frac{\partial f_{\theta_{t-1}}}{\partial {\mathbf x}} (\text{\boldmath$\xi$})\Vert ^2.
\end{equation}
The gradient of the function $\Phi(\theta) = J(f_\theta)+\tilde{\lambda}  {\mathbb E}_{\text{\boldmath$\xi$}\sim \hat\zeta}\Vert \frac{\partial f_\theta}{\partial {\mathbf x}} (\text{\boldmath$\xi$}) - P_{t-1}\frac{\partial f_{\theta_{t-1}}}{\partial {\mathbf x}} (\text{\boldmath$\xi$})\Vert ^2$ equals
\begin{equation}
\begin{split}
\frac{\partial \Phi(\theta)}{\partial \theta} = {\mathbb E}_{({\mathbf z}, c)\sim P_{\rm{data}}, \text{\boldmath$\epsilon$} \sim N({\mathbf 0}, \upsilon^2 I_n)} \frac{\partial }{\partial \theta} L(c, f_\theta({\mathbf z}+\text{\boldmath$\epsilon$}))   +
\tilde{\lambda}  {\mathbb E}_{\text{\boldmath$\xi$}\sim \hat\zeta}\frac{\partial }{\partial \theta} \Vert \frac{\partial f_\theta}{\partial {\mathbf x}} (\text{\boldmath$\xi$}) - P_{t-1}\frac{\partial f_{\theta_{t-1}}}{\partial {\mathbf x}} (\text{\boldmath$\xi$})\Vert ^2.
\end{split}
\end{equation}
That is why $\nabla_{\theta} L$ (given to Adam optimizer in the gradient descent loop) in the Algorithm~\ref{below} is an unbiased estimator of $\frac{\partial \Phi(\theta)}{\partial \theta}$. Thus, in the ``while loop'' we find optimal ${\hat \phi}_{t} = 
f_{\theta_t}$.

According to Algorithm~\ref{alternate-mod2}, the next goal is to estimate $$M_t = \begin{bmatrix} {\rm Re\,}
\langle \frac{\partial{\hat \phi_t}}{\partial x_i}, \frac{\partial{\hat \phi_t}}{\partial x_j}\rangle_{L_{2,\hat\zeta}({\mathbb R}^{n})}
\end{bmatrix}.$$ It is easy to see that
\begin{equation}
M_t  = {\mathbb E}_{\text{\boldmath$\chi$}\sim \hat\zeta}
\frac{\partial{\hat \phi_t}}{\partial {\mathbf x}} (\text{\boldmath$\chi$}) \frac{\partial{\hat \phi_t}}{\partial {\mathbf x}} (\text{\boldmath$\chi$})^T = {\mathbb E}_{\text{\boldmath$\chi$}\sim \hat\zeta}
\frac{\partial f_{\theta_t}}{\partial {\mathbf x}} (\text{\boldmath$\chi$}) \frac{\partial f_{\theta_t}}{\partial {\mathbf x}} (\text{\boldmath$\chi$})^T.
\end{equation}
From the last we see that the matrix $M_t$ can be estimated by sampling $\text{\boldmath$\chi$} \sim \hat{\zeta}$ a sufficient number of times (the parameter $l$ in our algorithm). All the rest is identical to Algorithm~\ref{alternate-mod2}.

\begin{algorithm}
\begin{algorithmic}
\caption{The numerical alternating scheme for SDR-ORF. We use $\upsilon=1.0$, $\hat{\zeta} ({\mathbf x} ) = G_{0.8}^n ({\mathbf x} )$ and default values of $\tilde{\lambda} = 10, m \approx 50, m' = 100, l = 30000, \alpha = 0.0001, \beta_1 = 0.5, \beta_2 = 0.9$}\label{below}

\State $P_0 \longleftarrow  {\mathbf 0}, \theta_0 \longleftarrow  {\mathbf 0}$

\For{$t = 1, \cdots, T$}

\While{$\theta$ has not converged}

\State Sample $\{({\mathbf z}_i, c_i)\}_{i=1}^m \sim P_{\rm{data}}$

\State Sample $\{\text{\boldmath$\epsilon$}_i\}_{i=1}^m \sim N({\mathbf 0}, \upsilon^2 I_n)$

\State Sample $\{\text{\boldmath$\xi$}_i\}_{i=1}^{m'} \sim \hat{\zeta}$

\State $L\longleftarrow \frac{1}{m}\sum_{i=1}^m L(c_i, f_\theta ({\mathbf z_i}+ \text{\boldmath$\epsilon$}_i)) + \frac{\tilde{\lambda}}{m'}\sum_{i=1}^{m'} \Vert \frac{\partial f_\theta (\text{\boldmath$\xi$}_i))}{\partial {\mathbf x}} - P_{t-1} \frac{\partial f_{\theta_{t-1}} (\text{\boldmath$\xi$}_i))}{\partial {\mathbf x}}\Vert ^2$

\State $\theta \longleftarrow {\rm Adam} (\nabla_{\theta} L, \theta, \alpha, \beta_1, \beta_2)$

\EndWhile

\State $\theta_t \longleftarrow \theta$

\State Sample $\{\text{\boldmath$\chi$}_i\}_{i=1}^{l} \sim \hat{\zeta}$

\State Calculate $M_{t} = \frac{1}{l}\sum_{i=1}^l \frac{\partial f_{\theta_{t}} (\text{\boldmath$\chi$}_i))}{\partial {\mathbf x}} \frac{\partial f_{\theta_{t}} (\text{\boldmath$\chi$}_i))}{\partial {\mathbf x}}^T$

\State Find $\{{\mathbf v}_i\}^n_1$ s.t. $M_{t}{\mathbf v}_i=\lambda_i{\mathbf v}_i$, $\lambda_1\geq \cdots\geq \lambda_n$

\State $P_{t} \longleftarrow  \sum_{i=1}^k {\mathbf v}_i {\mathbf v}_i^T$

\EndFor

\State \textbf{Output:} ${\mathbf v}_1, \cdots, {\mathbf v}_k$

\end{algorithmic}
\end{algorithm}

The regression version of the algorithm can be obtained by setting $L(c,c') = (c-c')^2$.
Implementations for different databases can be found at \href{https://github.com/k-nic/Alternating-Scheme}{github}.

\end{document}